\newcommand\codim{\text{codim}}
\newcommand{\Pic}{\operatorname{Pic}\nolimits}
\newcommand\pr{\text{pr}}
\newcommand\rk{\text{rk}}
\newcommand\Span{\text{Span}}
\renewcommand{\phi}{\varphi}
\newtheorem{theorem}{Theorem}[section]
\newtheorem{proposition}[theorem]{Proposition}
\newtheorem{lemma}[theorem]{Lemma}
\newtheorem{corollary}[theorem]{Corollary}
\newtheorem{sub}[subsection]{}
\theoremstyle{definition}
\newtheorem{definition}[theorem]{Definition}
\newtheorem{proposition-definition}[theorem]{Proposition-Definition}
\begin{document}

\title{On the Barth--Van de Ven--Tyurin--Sato theorem}

\author[I.Penkov]{\;Ivan~Penkov}

\address{
Jacobs University Bremen \\
School of Engineering and Science, Campus Ring 1, 28759 Bremen, Germany}
\email{i.penkov@jacobs-university.de}

\author[Tikhomirov]{\;Alexander S.~Tikhomirov}

\address{
Department of Mathematics\\
State Pedagogical University\\
Respublikanskaya Str. 108
\newline 150 000 Yaroslavl, Russia}
\email{astikhomirov@mail.ru}

\maketitle

\thispagestyle{empty}

\begin{abstract}
The Barth--Van de Ven--Tyurin--Sato Theorem claims that any finite rank vector bundle on
the infinite complex projective space $\mathbf{P}^\infty$ is isomorphic to a direct sum of
line bundles. We establish sufficient conditions on a locally complete linear ind-variety
$\mathbf{X}$ which ensure that the same result holds on $\mathbf{X}$. We then exhibit
natural classes of locally complete linear ind-varieties which satisfy these sufficient conditions.

2000 Mathematics Subject Classification: Primary 14M15; Secondary 14J60, 32L05.

\keywords{Keywords: ind-variety, vector bundle.}

\end{abstract}

\section{Introduction}\label{sec1}
\label{Introduction}
The Barth--Van de Ven--Tyurin--Sato Theorem claims that any finite rank vector bundle on the infinite complex
projective space $\mathbf{P}^\infty$ is isomorphic to a direct sum  of line bundles. For rank-two bundles this was
established by Barth and Van de Ven in \cite{BV}, and for finite rank bundles it was proved by Tyurin in \cite{T} and
Sato in [S1]. This topic was revived in the more recent papers \cite{DP}, \cite{PT1}, \cite{PT2}, where in particular
the case of twisted ind-varieties was considered.

In the current paper we consider ind-varieties $\mathbf{X}=\underset{\longrightarrow}\lim\ X_m$ given by chains of embeddings of
smooth complete algebraic varieties
$$X_1\stackrel{\phi_1}{\hookrightarrow}X_2\stackrel{\phi_2}{\hookrightarrow}
\dots\stackrel{\phi_{m-1}}{\hookrightarrow}X_m\stackrel{\phi_m}{\hookrightarrow}\dots.$$
We call such ind-varieties \textit{locally complete}. A locally complete
ind-variety $\mathbf{X}=\underset{\longrightarrow}\lim\ X_m$ is \textit{linear} if the map on Picard groups
induced by $\varphi_i$ is a surjection for almost all $i$.
Our main objective is to give a reasonably general sufficient condition for the
Barth--Van de Ven--Tyurin--Sato Theorem to hold on a locally complete ind-variety
$\mathbf{X}$.

In the linear case, besides the results from the 1970-ies and the important results of
Sato [S2],[S3] in which he considers a case when the Barth--Van de Ven--Tyurin--Sato
Theorem no longer holds, some more recent results belong to Donin and Penkov [DP]. In
particular, it is shown in [DP] that the Barth--Van de Ven--Tyurin--Sato Theorem  holds on
any linear direct limit
$\mathbf{G}(\infty)=\underset{\longrightarrow}\lim\ G(k_m,\mathbb{C}^{n_m})$, where
$G(k_m,\mathbb{C}^{n_m})$ denotes the grassmannian of $k_m$-dimensional subspaces in
$\mathbb{C}^{n_m}$, under the assumption that
$\underset{m\to\infty}\lim k_m=\underset{m\to\infty}\lim(n_m-k_m)=\infty$.
It turns out that there is a single isomorphism class of such ind-varieties.
Nevertheless, there are other natural homogeneous ind-varieties on which the Barth--Van de
Ven--Tyurin--Sato Theorem holds but which have not been considered in the literature. This
applies in particular to linear direct limits of isotropic (orthogonal or symplectic)
grassmannians, as well as to direct products of such direct limits.

For this reason we formulate a set of abstract conditions on a linear locally complete
ind-variety $\mathbf{X}$ which ensure that the Barth--Van de Ven--Tyurin--Sato Theorem
(shortly, BVTS Theorem) holds. We then give many examples of ind-varieties
$\mathbf{X}$ satisfying these sufficient conditions. An interesting new class of such ind-varieties consists of direct limits $\mathbf{Y}$ of linear sections $Y_m$ of
$G(k_m,\mathbb{C}^{n_m})$, where
$\underset{\longrightarrow}\lim\ G(k_m,\mathbb{C}^{n_m})=\mathbf{G}(\infty)$.
Another class of ind-varieties on which the Barth--Van de Ven--Tyurin--Sato Theorem holds
are certain ind-varieties of generalized flags, see subsection 6.3.

Probably, there are more general sufficient conditions for the Barth--Van de Ven--Tyurin--
Sato Theorem to hold on locally complete ind-varieties. In addition, for non-linear locally
complete ind-varieties nothing seems to be known beyond the results of [PT2]. Therefore,
providing a sufficient condition for the Barth--Van de Ven--Tyurin--Sato Theorem to hold
on general locally complete ind-varieties remains a project for the future.

{\bf Acknowledgements.} We acknowledge the support and hospitality of the Max Planck
Institute for Mathematics in Bonn where the present paper was conceived. We also acknowledge
partial support from the DFG through Priority Program "Representation Theory" (SPP 1388)
at Jacobs University Bremen. A.S.T. has been financially supported by the Ministry of Education and Science of the Russian Federation.

\vspace{1cm}
\section{Linear ind-varieties. Statement of the main result}\label{linear ind-var}

The ground field is $\mathbb{C}$. We use the term algebraic variety as a synonym for a
reduced Noetherian scheme. If $E$ is a vector bundle (or simply a vector space), $E^*$
stands for the dual bundle (or dual space). We use the standard notation
$\mathcal{O}_{\mathbb{P}^n}(a)$ for the line bundle
$\mathcal{O}_{\mathbb{P}^n}(-1)^{\otimes-a}$, where
$\mathcal{O}_{\mathbb{P}^n}(-1)$ is the tautological bundle on the complex $n$-dimensional
projective space $\mathbb{P}^n$.

Recall that an \textit{ind-variety} is the direct limit
$\mathbf{X}=\underset{\longrightarrow}\lim X_m$ of a chain of morphisms of algebraic varieties
\begin{equation}\label{eq1}
X_1\stackrel{\phi_1}{\to}X_2\stackrel{\phi_2}{\to}\dots\stackrel{\phi_{m-1}}{\to}X_m
\stackrel{\phi_m}{\to}X_{m+1}\stackrel{\phi_{m+1}}{\to}\dots\ .
\end{equation}
Note that the direct limit of the chain (\ref{eq1}) does not change if we replace the sequence $\{X_m\}_{m\ge1}$ by a subsequence $\{X_{i_m}\}_{m\ge1}$, and the morphisms
$\phi_m$ by the compositions
$\tilde{\phi}_{i_m}:={\phi}_{i_{m+1}-1}\circ...\circ{\phi}_{i_m+1}\circ{\phi}_{i_m}$.

Let $\mathbf{X}$ be the direct limit of (\ref{eq1}) and  $\mathbf{X}'$ be the direct limit of a chain
$$
X'_1\stackrel{\phi'_1}{\to}X'_2\stackrel{\phi'_2}{\to}\dots\stackrel{\phi'_{m-1}}{\to}X'_m\stackrel{\phi'_m}{\to}
X'_{m+1}\stackrel{\phi'_{m+1}}{\to}\dots\ .
$$
A {\it morphism of ind-varieties} $\mathbf{f}:\mathbf{X}\to\mathbf{X}'$ is a map from
$\mathbf{X}$ to $\mathbf{X}'$ induced by a collection of morphisms of algebraic varieties
$\{f_m:X_m\to Y_{n_m}\}_{m\ge1}$ such that $\psi_{n_m}\circ f_m=f_{m+1}\circ\phi_m$ for all $m\ge1$. The identity morphism $\mathrm{id}_\mathbf{X}$ is a morphism which coincides with the identity as a set-theoretic map from $\mathbf{X}$ to $\mathbf{X}$.
A morphism $\mathbf{f}:\mathbf{X}\to\mathbf{X}'$ is an \textit{isomorphism} if there exists a morphism
$\mathbf{g}:\mathbf{X}'\to\mathbf{X}$ such that $\mathbf{g}\circ\mathbf{f}=\mathrm{id}_\mathbf{X}$ and
$\mathbf{f}\circ\mathbf{g}=\mathrm{id}_{\mathbf{X}'}$.

In what follows we only consider chains (\ref{eq1}) such that $X_m$ are complete algebraic
varieties, $\underset{m\to\infty}\lim(\dim X_m)=\infty$, and the morphisms $\phi_m$ are
embeddings. We call such ind-varieties \textit{locally complete}. Furthermore, we call a
morphism
$\mathbf{f}:\mathbf{X}=\underset{\to}\lim X_n\to\mathbf{X}'=\underset{\to}\lim X'_n$ of
locally complete ind-varieties an \textit{embedding} if all morphisms $f_m:X_m\to X'_{n_m},\
m\ge1,$ are embeddings.

A {\it vector bundle $\mathbf{E}$ of rank}
$\mathbf{r}\in\mathbb{Z}_{>0}$ on $\mathbf{X}$ is the inverse limit
$\underset{\leftarrow}\lim E_m$ of an inverse system of vector bundles $E_m$ or rank
$\mathbf{r}$ on $X_m$, i.e. a system of vector bundles $E_m$ with fixed
isomorphisms $\psi_m: E_m\cong \phi^*_mE_{m+1}$; here and below $\phi^*$ stands for
inverse image of vector bundles under a morphism $\phi$.
Clearly, $\mathbf{E}|_{X_m}\cong E_m,\ m\ge1$.
In particular, the \textit{structure sheaf} $\mathcal{O}_\mathbf{X}=\underset{\longleftarrow}\lim \mathcal{O}_{X_m}$
of an ind-variety $\mathbf{X}$ is well defined. By the \textit{Picard group} $\text{Pic}\mathbf{X}$ we understand the
group of isomorphism classes of line bundles on $\mathbf{X}$. Clearly, $\text{Pic}\mathbf{X}$ is the inverse limit
$\underset{\longleftarrow}\lim\text{Pic}X_m$ of the inverse system
$\{\phi^*_m:\text{Pic}X_{m+1}\to\text{Pic}X_m\}_{m\ge1}$. In the rest of the paper we automatically assume that all vector bundles considered have finite rank. If $\mathbf{E}$
is a vector bundle on $\mathbf{X}$, $r\mathbf{E}$ stands for the direct sum
$\mathbf{E}\oplus...\oplus\mathbf{E}$ of $r$ copies of $\mathbf{E}$.

A {\it linear ind-variety} is an ind-variety $\mathbf{X}=\underset{\to}\lim X_m$ such that, for each $m\ge1$,
the induced homomorphism of Picard groups $\phi^*_m:\text{Pic}X_{m+1}\to\text{Pic}X_m$ is an epimorphism.
A typical example of a linear ind-variety is the {\it projective ind-space}
$\mathbf{P}^{\infty}$ which is the direct limit of a chain of linear embeddings
$$
\mathbb{P}^{n_1}\stackrel{\phi_1}{\hookrightarrow}\mathbb{P}^{n_2}
\stackrel{\phi_2}{\hookrightarrow}\dots\stackrel
{\phi_{m-1}}{\hookrightarrow}\mathbb{P}^{n_m}\stackrel{\phi_m}{\hookrightarrow}\dots,
$$
for an arbitrary increasing sequence $\{n_m\}_{m\ge1}$ of nonnegative integers. (It is
easy to see that the definition of $\mathbf{P}^{\infty}$ does not depend, up to
isomorphism of ind-varieties, on the
choice of the sequence $\{n_m\}_{m\ge1}$ and the embeddings $\varphi_m$.)
By a {\it projective ind-subspace} of an ind-variety $\mathbf{X}$ we understand the image
of an embedding
$\psi:\mathbf{P}^{\infty}\hookrightarrow\mathbf{X}$.

Another example of a linear ind-variety is the \textit{ind-grassmannian}
$\mathbf{G}(\infty)$ which is the direct limit of a chain of linear embeddings
$$
G(k_1,\mathbb{C}^{n_1})\stackrel{\phi_1}{\hookrightarrow}G(k_2,\mathbb{C}^{n_2})
\stackrel{\phi_2}{\hookrightarrow}...\stackrel{\phi_{m-1}}{\hookrightarrow}G(k_m,
\mathbb{C}^{n_m})\stackrel{\phi_m}{\hookrightarrow}...,
$$
where $G(k_m,\mathbb{C}^{n_m})$ is the grassmannian of $k_m$-dimensional subspaces in an
$n_m$-dimensional vector space and
$\underset{m\to\infty}\lim k_m=\underset{m\to\infty}\lim(n_m-k_m)=\infty$.

Let $\mathbf{X}=\underset{\to}\lim X_m$
be a linear ind-variety such that there is a finite or countable set
$\Theta_{\bf X}$ and a collection
$\{{\bf L}_i=\underset{\leftarrow}\lim L_{im}\}_{i\in\Theta_{\bf X}}$
of line bundles on ${\bf X}$ such that, for any $m$,
$L_{im}\simeq\mathcal{O}_{X_m}$ for all but finitely many indices
$i_1(m),...,i_j(m)$, and the images of
$L_{i_1(m)m},...,L_{i_j(m)m}$ in Pic$X_m$ form a
basis of $\mathrm{Pic}X_m$ which is assumed to be a free abelian group. It is clear that
in this case $\mathrm{Pic}{\bf X}$ is isomorphic to a direct product of infinite cyclic
groups with generators the images of ${\bf L}_i$. We denote by
$\underset{i\in\Theta_{\bf X}}\otimes\mathbf{L}_i^{\otimes a_i}$
the line bundle on {\bf X} whose restriction to $X_m$ equals
$\otimes_iL_{im}^{\otimes a_i}=L_{i_1(m)m}^{\otimes a_1}\otimes...\otimes
L_{i_j(m)m}^{\otimes a_j}$.
We say that $\mathbf{X}$ \textit{satisfies the property} L if, in addition to the above condition,
$H^1(X_m,\otimes_iL^{\otimes a_i}_{im})=0$ for any $m\ge1$ if some $a_i$ is negative.

Let $\mathbf{X}$ satisfy the property L.
For a given $i\in\Theta_{\bf X}$, a smooth rational curve $C\simeq\mathbb{P}^1$ on $\mathbf{X}$ is a
\textit{projective line of the i-th family on} $\mathbf{X}$ (or simply, a {\it line of the i-th family}), if
\begin{equation}\label{Li|Pj}
\mathbf{L}_j|_C\cong\mathcal{O}_{\mathbb{P}^1}(\delta_{ij})\ \ \ for\  \ \ j\in\Theta_{\bf X}.
\end{equation}
By $\mathbf{B}_i$ we denote the set of all projective lines of the i-th family on
$\mathbf{X}$. It has a natural structure of an ind-variety:
$\mathbf{B}_i=\underset{\to}\lim B_{im}$, where
$B_{im}:=\{C\in\mathbf{B}_i\ |\ C\subset X_m\}$ for $m\ge 1.$ For any point $x\in\mathbf{X}$
the subset $\mathbf{B}_i(x)=\{C\in \mathbf{B}_i\ |\ C\ni x\}$
inherits an induced structure of an ind-variety.

Assume that $\mathbf{X}$ satisfies the property L. Then we say that $\mathbf{X}$ \textit{satisfies the property}
A if for any $i\in\Theta_{\bf X}$
there is an ind-variety $\mathbf{\Pi}_i=\underset{\to}\lim \Pi_{im}$
whose points are projective ind-subspaces
$\mathbf{P}^\infty\subset\mathbf{B}_i$,
where
$\Pi_{im}=\{\mathbb{P}^{n_m}\subset B_{im}\ |\ \mathbb{P}^{n_m}=
\mathbf{P}^\infty\cap B_{im}$ for some $\mathbf{P}^\infty\in\mathbf{\Pi}_i\}$,
and, for any point $x\in\mathbf{X}$, the following conditions hold:

(A.i) for each $m\ge1$ such that $x\in X_m$, the sheaf $L_{im}$ defines a morphism
$\psi_{im}:X_m\to\mathbb{P}^{r_{im}}:=\mathbb{P}(H^0(X_m,L_{im})^*)$
which maps the family of lines $B_{im}(x)$ isomorphically to a subfamily of lines in $
\mathbb{P}^{r_{im}}$ passing through the point $\psi_{im}(x)$;

(A.ii) the variety
${\Pi}_{im}(x):=\{\mathbb{P}^{n_m}\in{\Pi}_{im}\ |\ \mathbb{P}^{n_m}\ni x\}$
is connected for any $m\ge 1;$

(A.iii) the projective ind-subspaces $\mathbf{P}^\infty\in\mathbf{\Pi}_i(x):=\underset{\to}\lim \Pi_{im}(x)$ fill
$\mathbf{B}_i(x)$;

(A.iv) for any $d\in\mathbb{Z}_{\ge1}$ there exists a $m_0(d)\in\mathbb{Z}_{\ge1}$ such that, for any $d$-dimensional
variety

$Y$ and any $m\ge m_0(d)$, any morphism $\Pi_{im}(x)\to Y$ is a constant map.\\
In particular, (A.ii) and (A.iii) imply that the varieties $\Pi_{im},$ $B_{im},$ $B_{im}(x)$ are connected.

Let $\mathbf{X}$ satisfy the properties L and A as above. A vector bundle $\mathbf{E}$ on
$\mathbf{X}$ is called $\mathbf{B}_i$-{\it uniform,} if for any projective line
$\mathbb{P}^1\in \mathbf{B}_i$ on $\mathbf{X}$, the restricted bundle
$\mathbf{E}|_{\mathbb{P}^1}$ is isomorphic to
$\oplus_{j=1}^{\mathrm{rk}\mathbf{E}}\mathcal{O}_{\mathbb{P}^1}(k_j)$
for some integers $k_j$ not depending on the choice of $\mathbb{P}^1.$
If in addition all $k_j=0$, then $\mathbf{E}$ is called $\mathbf{B}_i$-linearly trivial.
We call $\mathbf{E}$ {\it uniform} (respectively, {\it linearly trivial}) if it is
$\mathbf{B}_i$-uniform (respectively, $\mathbf{B}_i$-linearly trivial) for any
$i\in\Theta_{\mathbf{X}}$. Moreover,
we say that $\mathbf{X}$ {\it satisfies the property} T if any linearly trivial vector
bundle on $\mathbf{X}$ is trivial.

Our general version of the BVTS Theorem is the following.

\begin{theorem}\label{E on linear X}
Let $\mathbf{E}$ be a vector bundle on a linear ind-variety $\mathbf{X}$.

(i) If $\mathbf{X}$ satisfies the properties $\rm{L}$ and
$\rm{A}$ for for some fixed line bundles
$\{\mathbf{L}_i\}_{i\in\Theta_{\mathbf{X}}}$,
and corresponding families
$\{\mathbf{B}_i\}_{i\in\Theta_{\mathbf{X}}}$ of projective lines on ${\mathbf{X}}$,
then $\mathbf{E}$ has a filtration by vector subbundles
\begin{equation}\label{filtration0}
0=\mathbf{E}_0\subset\mathbf{E}_1\subset...\subset\mathbf{E}_t=\mathbf{E}
\end{equation}
with uniform successive quotients $\mathbf{E}_k/\mathbf{E}_{k-1}$, $k=1,...,t.$

(ii) If, in addition, $\mathbf{X}$ satisfies the property T, then the filtration (\ref{filtration0}) splits and its
quotients are of the form
$$
\mathbf{E}_k/\mathbf{E}_{k-1}\cong\mathrm{rk}(\mathbf{E}_k/\mathbf{E}_{k-1})
(\underset{i\in\Theta_{\mathbf{X}}}\bigotimes\mathbf{L}_i^{\otimes a_{ik}}),
\ \ \ \ a_{ik}\in\mathbb{Z},\ i\in\Theta_{\mathbf{X}},\ 1\le k\le t.
$$
In particular, $\mathbf{E}$ is isomorphic to a direct sum of line bundles.
\end{theorem}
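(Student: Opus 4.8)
\bigskip

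\noindent\textbf{Plan of the proof.} The plan is to deduce part~(i) from the following \emph{Key Lemma}: if a vector bundle $\mathbf{F}$ on $\mathbf{X}$ admits no vector subbundle $\mathbf{F}'$ with $0\neq\mathbf{F}'\neq\mathbf{F}$, then $\mathbf{F}$ is uniform. Granting this, part~(i) is immediate: since $\rk\mathbf{E}<\infty$, among all chains of vector subbundles of $\mathbf{E}$ there is a maximal one, say \refeq{filtration0}; if some quotient $\mathbf{E}_k/\mathbf{E}_{k-1}$ were not uniform, the Key Lemma applied to it would yield a proper nonzero subbundle, whose preimage in $\mathbf{E}_k$ would strictly refine the chain, a contradiction. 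So all successive quotients of \refeq{filtration0} are uniform.

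To prove the Key Lemma, assume $\mathbf{F}$ is not $\mathbf{B}_i$-uniform for some $i\in\Theta_{\mathbf X}$; I shall produce a proper nonzero subbundle. First, using upper semicontinuity of the splitting type over the varieties $B_{im}$ and its stabilization as $m\to\infty$, one sees that $\mathbf{F}|_C$ has a well-defined generic splitting type $(a_1\ge\dots\ge a_r)$ along $\mathbf{B}_i$; put $a=a_1$ and let $k$ be its multiplicity. On the incidence ind-variety $\mathcal{Z}_i=\{(x,C)\,:\,x\in C\in\mathbf{B}_i\}$ with projections $p\colon\mathcal{Z}_i\to\mathbf{X}$, $q\colon\mathcal{Z}_i\to\mathbf{B}_i$, the sheaf $q_*\bigl(p^*\mathbf{F}\otimes(p^*\mathbf{L}_i)^{\otimes-a}\bigr)$ is locally free of rank $k$ over the open locus where the splitting type is generic, and, twisted back by $(p^*\mathbf{L}_i)^{\otimes a}$, its $q$-pullback is there a rank-$k$ subbundle $\mathcal{S}\subset p^*\mathbf{F}$ that restricts to the maximal positive summand of $\mathbf{F}|_C$ on each generic line. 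The heart of the argument is to descend $\mathcal{S}$ along $p$ to a genuine subbundle of $\mathbf{F}$ on all of $\mathbf{X}$. Fix $x\in\mathbf{X}$. As $p^*\mathbf{F}$ is trivial with fibre $\mathbf{F}_x$ over $p^{-1}(x)\cong\mathbf{B}_i(x)$, giving $\mathcal{S}$ on a dense open of $\mathbf{B}_i(x)$ is the same as giving a morphism to the Grassmannian $G(k,\mathbf{F}_x)$. By (A.i) the $i$-th family of lines through $x$ is locally modelled on lines through a point in a projective space, and by (A.iii) the projective ind-subspaces $\mathbf{P}^\infty\in\mathbf{\Pi}_i(x)$ sweep out $\mathbf{B}_i(x)$; restricting $\mathbf{F}$ to the corresponding projective ind-subspaces $\mathbf{P}_\alpha\cong\mathbf{P}^\infty$ of $\mathbf{X}$ through $x$ and invoking the known Barth--Van de Ven--Tyurin--Sato Theorem on $\mathbf{P}^\infty$, one gets $\mathbf{F}|_{\mathbf{P}_\alpha}\cong\bigoplus_j\mathcal{O}_{\mathbf{P}^\infty}(b_j(\alpha))$, whose maximal positive summand has a well-defined fibre $W_\alpha\subset\mathbf{F}_x$. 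The assignment $\alpha\mapsto W_\alpha$ defines, for each $m$, a morphism $\Pi_{im}(x)\to G(k,\mathbf{F}_x)$; since $\dim G(k,\mathbf{F}_x)$ is bounded independently of $m$, (A.iv) forces this morphism to be \emph{constant} for $m\gg0$, and the connectedness in (A.ii) forces the value to be the same over all the pieces covering $\mathbf{B}_i(x)$. This produces a single $k$-dimensional $\mathbf{F}'_x\subset\mathbf{F}_x$ for every $x$; with the help of the $H^1$-vanishing in property~L one checks that the $\mathbf{F}'_x$ glue to a vector subbundle $\mathbf{F}'$ of $\mathbf{F}$ over each $X_m$, compatibly with the ind-structure, and that $0\neq\mathbf{F}'\neq\mathbf{F}$. (If the generic type along $\mathbf{B}_i$ is already pure, one runs the same argument starting from a jumping line rather than a generic one; then there is no jumping locus to avoid and the descent is straightforward.)

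For part~(ii), refine \refeq{filtration0} to a maximal chain as in part~(i), using at each step the subbundles ``generated along lines of the $i$-th family by the summands of degree $\ge c$'' produced in the Key Lemma; the result is a filtration $0=\mathbf{E}_0\subset\dots\subset\mathbf{E}_t=\mathbf{E}$ with uniform quotients $\mathbf{Q}_k$, which moreover can be arranged so as to refine the partial order by $\mathbf{L}_i$-exponents. Each $\mathbf{Q}_k$ is of pure splitting type along every family (else the Key Lemma would further refine the chain), so there is a unique tuple $(a_{ik})_{i\in\Theta_{\mathbf X}}$ with $\mathbf{Q}_k\otimes\bigl(\bigotimes_i\mathbf{L}_i^{\otimes-a_{ik}}\bigr)$ linearly trivial; by property~T this bundle is trivial, i.e.\ $\mathbf{Q}_k\cong\rk(\mathbf{Q}_k)\bigl(\bigotimes_i\mathbf{L}_i^{\otimes a_{ik}}\bigr)$. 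Finally, the filtration splits: the obstructions live in the groups $\Ext^1(\mathbf{Q}_k,\mathbf{Q}_j)=\rk(\mathbf{Q}_k)\rk(\mathbf{Q}_j)\,H^1\bigl(\mathbf{X},\bigotimes_i\mathbf{L}_i^{\otimes(a_{ij}-a_{ik})}\bigr)$, and, the chain being compatible with the partial order, for the pairs that matter some exponent $a_{ij}-a_{ik}$ is negative, so these $H^1$ vanish by property~L (while $H^1(\mathbf{X},\mathcal{O}_\mathbf{X})=0$ since $\Pic X_m$ is free abelian). An induction on $t$ then yields $\mathbf{E}\cong\bigoplus_k\mathbf{Q}_k$, a direct sum of line bundles.

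The hard part is the descent step in the Key Lemma: fabricating, out of the relative ``maximal positive part'' --- which a priori lives only on the incidence ind-variety and only over the locus of generic splitting type --- an honest subbundle of $\mathbf{E}$ on all of $\mathbf{X}$ that is independent of all choices. Properties~(A.i)--(A.iv) and~L are precisely engineered for this: (A.i) linearizes the local picture, (A.iii) reduces matters to the classical theorem on $\mathbf{P}^\infty$, (A.iv) rigidifies the classifying maps into finite-dimensional Grassmannians, (A.ii) propagates the resulting constancy, and L supplies the cohomological vanishing needed both for subbundle-ness and for the splitting of the filtration.
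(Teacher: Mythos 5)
Your part (ii) is essentially the paper's argument (twist each uniform quotient to a linearly trivial bundle, apply T, kill the extension groups by the $H^1$-vanishing of L), and your reduction of part (i) to a ``Key Lemma'' producing a proper subbundle of any non-uniform bundle is also the right shape. The problem is the proof of the Key Lemma itself, and the gap is concrete: you identify a point $\mathbf{P}^\infty\in\mathbf{\Pi}_i(x)$ with ``a projective ind-subspace $\mathbf{P}_\alpha$ of $\mathbf{X}$ through $x$'' and then invoke the classical BVTS theorem on $\mathbf{P}_\alpha$ to split $\mathbf{F}|_{\mathbf{P}_\alpha}$. But $\mathbf{P}^\infty\in\mathbf{\Pi}_i(x)$ is a projective ind-subspace of $\mathbf{B}_i(x)$, i.e.\ a \emph{family of lines}, not a subvariety of $\mathbf{X}$; what property (A.i) gives for its sweep in $\mathbf{X}$ is only the cone $\overline{Y}_m$ over the image of $\mathbb{P}^{n_m}$ under an embedding of degree $N(i)$ (this is exactly the diagram \refeq{diag Sato}, with $\mathcal{O}_{\mathbb{P}^{r_{im}}}(1)$ restricting to $\mathcal{O}_{\mathbb{P}^{n_m}}(N(i))$). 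Nothing in L or A forces $N(i)=1$ or the sweep to be a projective (ind-)space, so the appeal to the known theorem on $\mathbf{P}^\infty$ is not available. Note also that if this step did work, it would immediately make \emph{every} bundle $\mathbf{B}_i$-uniform (all rulings of a split bundle on a projective ind-space have the same type, and (A.ii)--(A.iii) would propagate this), which should warn you that the hypotheses cannot deliver it for free: the whole difficulty of the theorem sits exactly there.

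What the paper does instead, and what your proposal has no substitute for, is quantitative: Tyurin's polynomial bounds (Lemma \ref{lemma31}) on the splitting types over the ruled surfaces $F_{N(i)}\subset\overline{Y}_m$ bound the codimension of the locus $M_i(\mathbb{P}^{n_m})$ of lines of \emph{maximal} splitting type uniformly in $m$ (inequality \refeq{codim A}); Sato's Chern-class argument on the cone (Proposition \ref{proposition3.2}) shows this maximal type is independent of $x$ and of $\mathbb{P}^{n_m}\in\Pi_{im}(x)$; and only then do Sato's constancy theorem on small-codimension subvarieties of $\mathbb{P}^{n_m}$ together with (A.ii)--(A.iv) produce a morphism $\boldsymbol{\Phi}_{i1}:\mathbf{X}\to\mathbf{G}(\mathbf{r}_1,\mathbf{E})$, whose pullback of the tautological bundle is the subbundle; its uniformity is a degree count, which closes up precisely because one works with the maximal, not the generic, type (the generic top part gives a priori only a subsheaf across jumping lines). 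In your write-up the analogous points --- well-definedness of ``the'' generic type over the merely connected (possibly reducible) $B_{im}$, its independence of $x$, and above all the statement that the fibres $\mathbf{F}'_x$ ``glue to a vector subbundle with the help of the $H^1$-vanishing in property L'' --- are asserted rather than proved, and the $H^1$-vanishing of L is in fact irrelevant to that gluing (in the paper it enters only in part (ii)). So the proposal as it stands has a genuine gap at the heart of part (i).
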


\vspace{1cm}

\section{Proof of the main theorem}\label{sec2}
\vspace{1cm}

\begin{sub}{\bf Preliminaries on vector bundles.}\label{prelim}
\rm

If $C\subset X$ is a smooth irreducible rational curve in an algebraic variety $X$ and
$E$ is a vector bundle on $X$, then by a classical theorem often attributed to Grothendieck,
$\displaystyle E|_C$ is isomorphic to $\bigoplus_i\mathcal{O}_C(\delta_i)$ for some
$\delta_1\geq \delta_2\geq...\geq \delta_{\rk E}$. We call the ordered $\rk E$-tuple
$(\delta_1,...,\delta_{\rk E})$ \emph{the splitting type} of $E|_C$ and denote it
${\rm Split}(E|_C)$. We order splitting types lexicographically, i.e.
$(\delta_1,...,\delta_{\rk E})>(\delta'_1,...,\delta'_{\rk E})$ if
$\delta_1=\delta'_1,...,\delta_{k-1}=\delta'_{k-1},$ $ \delta_k>\delta'_k$ for some
$k,\ 1\le k\le\rk E$.

Let $\mathbf{X}$ be a locally complete linear ind-variety satisfying the properties L
and A, and let $x\in\mathbf{X}$ and $i\in\Theta_{\mathbf{X}}$. In the notation of (A.i), let
$\mathbb{P}^{r_{im}}=\mathbb{P}(H^0(X_m,L_{im})^*)$ and
$y=\psi_{im}(x)=\mathbb{C}u,\ 0\ne u\in H^0(X_m,L_{im})^*$, so that
$B_{im}(x)\subset\mathbb{P}^{r_{im}-1}_y:=\mathbb{P}(H^0(X_m,L_{im})^*/\mathbb{C}u)$.
Fix a projective subspace $\mathbb{P}^{n_m}\subset B_{im}(x)$, where
$\mathbb{P}^{n_m}\in\Pi_{im}(x)$.
Then
$\mathcal{O}_{\mathbb{P}(H^0(X_m,L_{im})^*/\mathbb{C}u)}(1)|_{\mathbb{P}^{n_m}}\simeq\mathcal{O}_{\mathbb{P}^{n_m}}(N(i))$
for some $N(i)>0$. Consider the locally closed subvariety
$Y_m:=\{(z,l)\in\mathbb{P}^{r_{im}}\times\mathbb{P}^{n_m}|z\in l\setminus\{y\}\}$ of
$\mathbb{P}^{r_{im}}\times\mathbb{P}^{n_m}$, and let
$Y_m\hookrightarrow\mathbb{P}^{r_{im}}$
be the embedding induced by the projection
$\mathbb{P}^{r_{im}}\times\mathbb{P}^{n_m}\to\mathbb{P}^{r_{im}}$.
Then $Y_m$ is isomorphic to the total space of the line bundle
$\mathcal{O}_{\mathbb{P}^{n_m}}(N(i))$ (see for instance \cite[Appendix B]{F}) and
\begin{equation}\label{pi*N}
\mathcal{O}_{\mathbb{P}^{r_{im}}}(1)|_{Y_m}\simeq\pi_m^*\mathcal{O}_{\mathbb{P}^{n_m}}(N(i)),
\end{equation}
where $\pi_m:Y_m\to\mathbb{P}^{n_m}$ is the natural projection.
Moreover, by construction we have a commutative diagram of morphisms
\begin{equation}\label{diag Sato}
\xymatrix{
\overline{Y}_m\ar@{_{(}->}[d] & Y_m\ar@{_{(}->}[d]\ar[l]_-{\tau_m}\ar[r]^-{\pi_m} &
\mathbb{P}^{n_m}\ar@{_{(}->}[d] \\
\mathbb{P}^{r_{im}} &
\tilde{\mathbb{P}}^{r_{im}}\ar[l]_-{\phi_y}\ar[r]^-{\pi_y} &
\mathbb{P}^{r_{im}-1}_y,}
\end{equation}
where $\tau_m:Y_m\hookrightarrow\overline{Y}_m:=Y_m\cup\{y\}$ is
the inclusion,
$\varphi_y:\tilde{\mathbb{P}}^{r_{im}}\to\mathbb{P}^{r_{im}}$ is
the blow-up of $\mathbb{P}^{r_{im}}$ with centre at $y$, and
$\pi_y$ is the natural projection which is a
$\mathbb{P}^1$-bundle. In addition, we have an open embedding
$$
\iota_m:Y_m\hookrightarrow\tilde{Y}:=\tilde{\mathbb{P}}^{r_{im}}\times_{\mathbb{P}^{r_{im}-1}_y}\mathbb{P}^{n_m}
$$
and projections
$\overline{Y}_m\overset{\tilde{\tau}_m}\leftarrow\widetilde{Y}_m\overset{\tilde{\tau}_m}\to\mathbb{P}^{n_m}$
such that
\begin{equation}\label{iota}
 \tau_m=\tilde{\tau}_m\circ\iota_m,\ \ \ \pi_m=\tilde{\pi}_m\circ\iota_m.
\end{equation}
By (A.i) $\psi_{im}:\psi_{im}^{-1}(\overline{Y}_m)\to\overline{Y}_m$ is an isomorphism. Hence we may
consider $\mathbf{E}|_{\psi_{im}^{-1}(\overline{Y}_m)}$ as a vector bundle on $\overline{Y}_m$ and denote it by
$\mathbf{E}|_{\overline{Y}_m}$. We also set $\mathbf{E}|_{Y_m}:=\tau_m^*(\mathbf{E}|_{\overline{Y}_m})$.

For an arbitrary projective line $\mathbb{P}^1\subset\mathbb{P}^{n_m}$, we consider the surface
$S=S(x,\mathbb{P}^1):=\pi_y^{-1}(\mathbb{P}^1)$ with natural
projections $ \pi_S:=\pi_y|_S:S\to\mathbb{P}^1$ and
$\sigma_S:=\phi_y|_S:S\to\mathbf{X}$. It follows from (\ref{pi*N})
that $S$ is a surface of type $F_{N(i)}$.

Let $\mathbf{E}$ be a vector bundle of rank $\mathbf{r}$ on $\mathbf{X}.$ For any $i\in \Theta_{\mathbf{X}}$ and
$x\in\mathbf{X}$ we set $C(i):=c_1(\mathbf{E}|_l)\in\mathbb{Z}$, where $c_1$ stands for first Chern class and
$l\in\mathbf{B}_i(x).$ Since $\mathbf{B}_i(x)$ is connected, $C(i)$ is well defined. Furthermore, we have
 $\delta_1(\mathbf{E}|_l)\ge C(i)/\mathbf{r}\ge \delta_B(\mathbf{E}|_l)$. Hence there are well-defined integers
$$
\delta_1^{\rm min}:=\underset{l\in\mathbf{B}_i(x)}\min~ \delta_1(\mathbf{E}|_l),\ \ \ \
\delta_{{\rm rk}E}^{\rm max}:=\underset{l\in\mathbf{B}_i(x)}\max~ \delta_{{\rm rk}E}(\mathbf{E}|_l),
$$
and there exist lines $l_{\min},l_{\max}\in\mathbf{B}_i(x)$ such that
$\delta_1(\mathbf{E}|_{l_{\min}})=\delta_1^{\min},$
$\delta_{{\rm rk}E}(\mathbf{E}|_{l_{\max}})=\delta_{{\rm rk}E}^{\max}.$
The inequality $C(i)\ge\delta_1^{\min}+(\mathbf{r}-1)\delta_{\mathrm{rk}E}(\mathbf{E}|_{l_{\min}})$ implies
\begin{equation}\label{ineq'}
\delta_1^{\min}-\delta_{{\rm rk}E}(\mathbf{E}|_{l_{\min}})\le\delta_1^{\min}-C(i)/(\mathbf{r}-1),\ \ \
\delta_1(\mathbf{E}|_{l_{\max}})-\delta_{\mathrm{rk}E}^{\max}\ge C(i)/(\mathbf{r}-1)-\delta_{\mathrm{rk}E}^{\max}.
\end{equation}

Fix $\mathbf{P}^{\infty}\in\mathbf{B}_i(x)$ and $l_{\min}\in\mathbf{P}^{\infty}$.
For an arbitrary point $l_0\in\mathbf{P}^{\infty}\smallsetminus\{l_{\min}\}$
consider the line
$\mathbb{P}^1=\Span(l_0,l_{\min})$ in $\mathbf{P}^{\infty}$  and the corresponding surface
$S=S(x,\mathbb{P}^1)$ together with the vector bundle $E_S:=\sigma_S^*\mathbf{E}$ on $S$.
For a general point $l\in\mathbb{P}^1$ ($l$ is a line on $ \mathbf{X}$), the first inequality in (\ref{ineq'}) implies
\begin{equation}\label{ineq1'}
\delta_{\mathrm{gen}}:=\delta_1(\mathbf{E}|_l)-\delta_{{\rm rk}E}(\mathbf{E}|_l) \le\delta_1^{{\rm min}}-C(i)/(\mathbf{r}-1).
\end{equation}

The following lemma is a straightforward consequence of a result of Tyurin.

\begin{lemma}\label{lemma31}
There exist polynomials $P_A$, $P_B\in\mathbb{Q}[x_1,...,x_6]$ such that for any
$l_0\in\mathbf{P}^{\infty}\smallsetminus\{l_{\min}\}$
\begin{equation}\label{ineq A}
\delta_1(\mathbf{E}|_{l_0})\le P_A(\mathbf{r},\delta_1^{\min},C(i),N(i),c_1^2(E_S),c_2(E_S))=:P_A(\mathbf{E},i),
\end{equation}
\begin{equation}\label{ineq B}
\delta_{{\rm rk}E}(\mathbf{E}|_{l_0})\ge P_B(\mathbf{r},\delta_{\mathrm{rk}E}^{\max},C(i),N(i),c_1^2(E_S),c_2(E_S))=:P_B(\mathbf{E},i),
\end{equation}
where $c_1^2(E_S)$ and $c_2(E_S)$ are considered as integers.
\end{lemma}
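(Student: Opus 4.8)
The plan is to deduce both inequalities from a result of Tyurin \cite{T} controlling how the splitting type of a vector bundle on a ruled surface over $\mathbb{P}^1$ can vary along the fibres, applied to $E_S$ on the surface $S=S(x,\mathbb{P}^1)$ of type $F_{N(i)}$. First I would make precise the dictionary between $E_S$ and the restrictions of $\mathbf{E}$ to lines of the $i$-th family: $\pi_S=\pi_y|_S\colon S\to\mathbb{P}^1$ is the ruling, and for each $l\in\mathbb{P}^1$ the morphism $\sigma_S=\phi_y|_S$ contracts the section of $\pi_S$ contained in the exceptional divisor of $\phi_y$ to the point $x$ and restricts to an isomorphism of $\pi_S^{-1}(l)$ onto the projective line $l$ of the $i$-th family; hence $E_S|_{\pi_S^{-1}(l)}\cong\mathbf{E}|_l$. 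Since $S$ is a smooth projective surface, $c_1^2(E_S)$ and $c_2(E_S)$ are well-defined integers, and as $l_0$ ranges over the connected family of lines $\Span(l_0,l_{\min})\subset\mathbf{P}^\infty$ the pairs $(S,E_S)$ vary in a flat family, so these two integers do not depend on $l_0$; this is what makes $P_A(\mathbf{E},i)$ and $P_B(\mathbf{E},i)$ well defined.

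Next I would bound the generic fibre splitting type. Write $(\delta_1^g,\dots,\delta_{{\rm rk}E}^g)$, with $\delta_1^g\ge\dots\ge\delta_{{\rm rk}E}^g$, for $\mathrm{Split}(\mathbf{E}|_l)$ with $l\in\mathbb{P}^1$ general, so that $\delta_1^g+\dots+\delta_{{\rm rk}E}^g=C(i)$. Combined with (\ref{ineq1'}), which bounds $\delta_1^g-\delta_{{\rm rk}E}^g$ from above in terms of $\delta_1^{\min}$ and $C(i)$, this forces $\delta_1^g$ to be at most a linear function of $\mathbf{r},\delta_1^{\min},C(i)$. The symmetric statement, applied to $\mathbf{E}^*$ (for which $\delta_1(\mathbf{E}^*|_l)=-\delta_{{\rm rk}E}(\mathbf{E}|_l)$, $\delta_1^{\min}(\mathbf{E}^*)=-\delta_{{\rm rk}E}^{\max}(\mathbf{E})$, $C(i)(\mathbf{E}^*)=-C(i)(\mathbf{E})$, $c_1^2(E_S^*)=c_1^2(E_S)$ and $c_2(E_S^*)=c_2(E_S)$), bounds $\delta_{{\rm rk}E}^g$ from below by a linear function of $\mathbf{r},\delta_{{\rm rk}E}^{\max},C(i)$; here one chooses the ambient projective ind-subspace so as to contain the relevant extremal line, using (A.iii).

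Then I would invoke Tyurin's estimate for $E_S$ on $S=F_{N(i)}$ at the fibre $f_0=\pi_S^{-1}(l_0)$. By semicontinuity of the splitting type the ``defects'' $\delta_1(E_S|_{f_0})-\delta_1^g\ge0$ and $\delta_{{\rm rk}E}^g-\delta_{{\rm rk}E}(E_S|_{f_0})\ge0$ are bounded above by an explicit polynomial in $\mathbf{r},N(i),c_1^2(E_S),c_2(E_S)$ and the generic splitting data: each defect is killed by successive elementary transformations of $E_S$ along $f_0$, every one of which strictly decreases a non-negative integer built from $c_2(E_S),c_1^2(E_S),N(i)$ and the splitting data, so the procedure terminates after boundedly many steps. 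Substituting the bounds on $\delta_1^g$ and $\delta_{{\rm rk}E}^g$, and using $\delta_1(\mathbf{E}|_{l_0})=\delta_1(E_S|_{f_0})$ and $\delta_{{\rm rk}E}(\mathbf{E}|_{l_0})=\delta_{{\rm rk}E}(E_S|_{f_0})$, one obtains polynomials $P_A,P_B\in\mathbb{Q}[x_1,\dots,x_6]$ for which (\ref{ineq A}) and (\ref{ineq B}) hold.

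I expect the last step to be the main obstacle: one must extract from Tyurin's argument a bound which is genuinely polynomial and which depends on $S$ only through the index $N(i)$ and on $E_S$ only through $\mathbf{r}$ and its Chern numbers — not through any finer geometry of $S$ or of the ambient $X_m$ — and one must check that the chain of elementary transformations stops after a number of steps bounded polynomially in the listed invariants. By comparison, the reductions of the first two paragraphs are routine.
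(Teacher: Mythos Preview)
Your proposal is correct and follows essentially the same route as the paper: apply Tyurin's bound for vector bundles on the ruled surface $S\simeq F_{N(i)}$ (the paper cites this as \cite[Ch.~2, \S1, Lemma~5]{T} rather than re-deriving it via elementary transformations), then use the inequality (\ref{ineq1'}) to eliminate $\delta_{\mathrm{gen}}$ in favour of $C(i)$, and handle (\ref{ineq B}) by the dual argument. The paper's proof is in fact terser than yours---it simply invokes Tyurin's lemma as a black box giving the required polynomial $f$, so your worry about extracting a genuinely polynomial bound is resolved by that citation.
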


\begin{proof}
By construction, $S$ is a surface of type $F_{N(i)}.$ Hence, repeating for the vector bundle $E_S$ the
proof of Lemma 5 from [T, Ch.~2, \S1] we obtain that there exists a
polynomial $f\in\mathbb{Q}[x_1,...,x_6]$ such that
$\delta_1(\mathbf{E}|_{l_0})\le f(\mathbf{r},\delta_1^{\min},\delta_{\mathrm{gen}},N(i),c^2_1(E_S),c_2(E_S))$.
Thus, in view of (\ref{ineq1'}), there exists a polynomial $P_A\in\mathbb{Q}[x_1,...,x_6]$ satisfying (\ref{ineq A}).
The proof of (\ref{ineq B}) is similar.
\end{proof}

The next proposition employs in a crucial way results of E. Sato. Fix $i\in \Theta_{\mathbf{X}}$, $x\in\mathbf{X}$
and $\mathbb{P}^{n_m}\in\Pi_{im}(x)$ for a large enough $m$. In view of (\ref{ineq A}) there exists a maximal (with respect to lexicographic order) splitting type
$S_i(\mathbf{E},\mathbb{P}^{n_m}):=\max\limits_{l\in \mathbb{P}^{n_m}}\mathrm{Split}(\mathbf{E}|_l)$.

\begin{proposition}\label{proposition3.2}
The maximal splitting type $S_i(\mathbf{E},\mathbb{P}^{n_m})$
depends only on the pair $(\mathbf{E},i)$, i.e. $S_i(\mathbf{E},\mathbb{P}^{n_m})$
does not depend on $x$ and on $\mathbb{P}^{n_m}\in\Pi_{im}(x)$.
\end{proposition}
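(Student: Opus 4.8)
The plan is to establish the independence in two steps: for a fixed point $x$, independence of the choice of $\mathbb{P}^{n_m}\in\Pi_{im}(x)$, and then independence of $x$. The whole argument rests on the tension between upper semicontinuity of splitting types in families and the rigidity property (A.iv). By \refle{lemma31} we have $\delta_1(\mathbf{E}|_l)\le P_A(\mathbf{E},i)$ and $\delta_{\mathrm{rk}E}(\mathbf{E}|_l)\ge P_B(\mathbf{E},i)$ for every line $l$ of the $i$-th family through $x$, while $c_1(\mathbf{E}|_l)=C(i)$ is fixed; hence only finitely many splitting types occur. Forming the incidence variety of lines over $\Pi_{im}(x)$ (a tower of a $\mathbb{P}^1$-bundle over a $\mathbb{P}^{n_m}$-bundle), with its projection $q$ to a suitable $X_{m'}$, $m'\gg m$, and pulling $\mathbf{E}$ back to it, the lexicographic upper semicontinuity of $l\mapsto\mathrm{Split}(\mathbf{E}|_l)$ on the total space together with properness of the projection to $\Pi_{im}(x)$ shows that $t\mapsto S_i(\mathbf{E},\mathbb{P}^{n_m}_t)$ is upper semicontinuous on $\Pi_{im}(x)$ with finite image. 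Consequently, if this function were not constant, there would be a nonempty closed subset $Z\subsetneq\Pi_{im}(x)$ on which $S_i(\mathbf{E},\mathbb{P}^{n_m}_t)$ takes a constant value strictly larger (in lexicographic order) than the value it takes for $t\notin Z$.

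To exclude such a $Z$ I would invoke the results of E.~Sato. Applied to the bundle $E_S=\sigma_S^*\mathbf{E}$ on the Hirzebruch surfaces $S=S(x,\mathbb{P}^1)$ of type $F_{N(i)}$ — equivalently, to $q^*\mathbf{E}$ over $\mathbb{P}^{n_m}$ — they exhibit the jumping of $\mathrm{Split}(\mathbf{E}|_l)$ as governed by a canonical maximal subsheaf whose numerical invariants are bounded in terms of $\mathbf{r}$, $C(i)$, $N(i)$, $c_1^2(E_S)$ and $c_2(E_S)$. The key point is that for a fixed $\mathbf{E}$ and $i$ these five quantities stabilize once $m$ is large: a line realizing the maximal splitting type, together with a surface $S$ through it, already lies in some fixed $X_{m_1}$, and $N(i)$ is computed from the fixed line bundle $\mathbf{L}_i$. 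Therefore Sato's construction attaches to the points of $\Pi_{im}(x)$ data of bounded type, varying algebraically, and hence gives a morphism $\Pi_{im}(x)\to Y$ into a variety $Y$ of dimension $d=d(\mathbf{E},i)$ independent of $m$; this morphism is non-constant exactly when the subsheaf jumps, i.e. when a set $Z$ as above exists. Taking $m\ge m_0(d)$ then contradicts (A.iv). Hence $t\mapsto S_i(\mathbf{E},\mathbb{P}^{n_m}_t)$ is constant, and $S_i(\mathbf{E},\mathbb{P}^{n_m})$ does not depend on $\mathbb{P}^{n_m}\in\Pi_{im}(x)$.

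For the independence of $x$, note first that by (A.iii) every $l\in B_{im}(x)$ lies in some $\mathbb{P}^{n_m}\in\Pi_{im}(x)$, so the common value found above equals $\max_{l\in B_{im}(x)}\mathrm{Split}(\mathbf{E}|_l)$. Running the same semicontinuity argument over the connected ind-variety $\mathbf{B}_i=\underset{\to}\lim B_{im}$ — bounding once more via \refle{lemma31}, and reducing, through (A.iii) and the connectedness of the $B_{im}$, to families over the $\Pi_{im}(x)$ so that (A.iv) applies — shows that this maximum cannot jump as $x$ varies; hence it is independent of $x$, and $S_i(\mathbf{E},\mathbb{P}^{n_m})$ depends only on the pair $(\mathbf{E},i)$. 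I expect the genuinely delicate step to be the one in the second paragraph: converting a hypothetical jump of the maximal splitting type into a non-constant morphism from $\Pi_{im}(x)$ whose target has dimension bounded \emph{uniformly in $m$}. Supplying that uniform bound is precisely the role of the quantitative form of Sato's results, together with the stabilization of $N(i)$ and of the Chern numbers $c_1^2(E_S)$, $c_2(E_S)$; without it, (A.iv) could not be brought to bear.
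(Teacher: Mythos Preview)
Your approach differs fundamentally from the paper's, and the step you yourself flag as delicate is in fact a genuine gap.

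The paper does not use property (A.iv) at all in proving this proposition. Instead it argues as follows. Via the diagram (\ref{diag Sato}), the variety $Y_m$ is the total space of a line bundle over $\mathbb{P}^{n_m}$, so Chow groups pull back isomorphically and one obtains $c_k(\mathbf{E}|_{Y_m})=\pi_m^*(c_kH^k)$ for certain integers $c_0,\dots,c_{\mathbf{r}}$. The compatibility of the morphisms $\pi_m$ as $m$ and $\mathbb{P}^{n_m}$ vary, together with functoriality of Chern classes, makes these integers \emph{manifestly} independent of $x$ and of $\mathbb{P}^{n_m}\in\Pi_{im}(x)$. One then forms the polynomial $h(t)=\sum_k c_k(-t)^{\mathbf{r}-k}$ and, following Sato \cite{S1}, shows that its roots are exactly $N(i)a_1,\dots,N(i)a_\alpha$ with multiplicities $r_1,\dots,r_\alpha$, where the $a_j$ are the distinct entries of the maximal splitting type and the $r_j$ their multiplicities. (This step uses the codimension bound (\ref{codim A}) on the locus $M_i(\mathbb{P}^{n_m})$ of lines of maximal splitting type to run Sato's vanishing argument for the top Chern classes of suitable twists.) Since $h(t)$ is determined by the $c_k$, so is the maximal splitting type --- and the independence is immediate.

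Your strategy, by contrast, tries to detect a hypothetical jump of $S_i(\mathbf{E},\mathbb{P}^{n_m})$ by a non-constant morphism $\Pi_{im}(x)\to Y$ with $\dim Y$ bounded independently of $m$, and then to kill it with (A.iv). But you never construct this morphism: you invoke ``Sato's construction'' as attaching to each point of $\Pi_{im}(x)$ some ``data of bounded type,'' without saying what the data are, what $Y$ is, or why the resulting map is a morphism of varieties. The natural candidate --- sending $\mathbb{P}^{n_m}$ to the subspace $\mathbf{E}_1(l)|_x$ for a line $l$ of maximal splitting type --- presupposes that the rank and leading entry of the maximal splitting type are already constant, which is precisely what you are trying to prove. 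Indeed, that morphism is what the paper constructs in (\ref{morphism f1}) \emph{after} Proposition \ref{proposition3.2} is in hand, in the course of proving Theorem \ref{E on linear X}; importing it here would be circular. Your argument for independence of $x$ has the same shape of gap: connectedness of $B_{im}$ and upper semicontinuity of $l\mapsto\mathrm{Split}(\mathbf{E}|_l)$ control the \emph{generic} splitting type on $B_{im}$, not the fibrewise maxima $\max_{l\in B_{im}(x)}\mathrm{Split}(\mathbf{E}|_l)$, and you again defer the crucial step to an unspecified application of (A.iv).
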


\begin{proof}
Set
$$
M_i(\mathbb{P}^{n_m}):=\{l\in\mathbb{P}^{n_m}\ |\ \mathrm{Split}(\mathbf{E}|_l)=S_i(\mathbf{E},\mathbb{P}^{n_m})\}.
$$
The semicontinuity of $\mathrm{Split}(\mathbf{E}|_l)$ implies that $M_i(\mathbb{P}^{n_m})$ is a closed subvariety of
$\mathbb{P}^{n_m}$. Moreover, Lemma \ref{lemma31}
together with \cite[Ch. 2, \S2, Lemmas 3 and 4]{T} yields the inequality
\begin{equation}\label{codim A}
\codim_{\mathbb{P}^{n_m}}M_i(\mathbb{P}^{n_m})\le\mathbf{r}(\mathbf{r}-1)(P_A(\mathbf{E},i)-
P_B(\mathbf{E},i)).
\end{equation}

Consider the upper row of the diagram (\ref{diag Sato}). Since the right-hand side of
(\ref{codim A}) is constant with respect to $m$, for large enough $m$ we have
\begin{equation}\label{codim<}
\codim_{\mathbb{P}^{n_m}}M_i(\mathbb{P}^{n_m})<\min(n_m-\mathbf{r},(n_m-2\mathbf{r}^2)/2).
\end{equation}
Also, clearly for large enough $m$
\begin{equation}\label{1codim<}
\codim_{\overline{Y}_m}\{x\}=\codim_{\overline{Y}_m}(\overline{Y}_m\smallsetminus Y_m)>\mathbf{r}.
\end{equation}

The inequality (\ref{1codim<}) shows that
\begin{equation}\label{0ck=}
c_k(\mathbf{E}|_{Y_m})=\tau_m^*c_k(\mathbf{E}|_{\overline{Y}_m}),\ \ \ 0\le k\le\mathbf{r},
\end{equation}
where $c_k(\cdot)$ stands for $k$-th Chern class.
Moreover, since the Chow group $A^k(\cdot)$ of codimension $k$ pulls back isomorphically to the total space of any
vector bundle, we have
\begin{equation}\label{ck=}
c_k(\mathbf{E}|_{Y_m})=\pi_m^*(c_kH^k),\ \ 0\le k\le\mathbf{r},
\end{equation}
where $H$ is the class of a hyperplane divisor on $\mathbb{P}^{n_m}$ and
$c_1,...,c_{\mathbf{r}}$ are integers.
It is essential to note that the obvious compatibility of the morphisms $\pi_m$ for varying $m$ and the functoriality of
Chern classes imply that these integers do not depend
on $x$ and on $\mathbb{P}^{n_m}\in\Pi_{im}(x)$. Note also that (\ref{pi*N}), (\ref{iota}) and
the equalities (\ref{0ck=}) and (\ref{ck=}) imply
\begin{equation}\label{again ck=}
\iota_m^*c_k(\tilde{\tau}_m^*(\mathbf{E}\otimes\mathbf{L}_i^{-a}|_{\overline{Y}_m}))=
\tau_m^*c_k(\mathbf{E}\otimes\mathbf{L}_i^{-a}|_{\overline{Y}_m})=
c_k(\mathbf{E}|_{Y_m}\otimes\pi_m^*\mathcal{O}_{\mathbb{P}^{n_m}}(-N(i)aH)),\ \ \
0\le k\le\mathbf{r},\ a\in\mathbb{Z}.
\end{equation}

Next, consider the polynomial
\begin{equation}\label{poly h}
 h(t)=\sum_{k=0}^{\mathbf{r}}c_k(-t)^{\mathbf{r}-k}\in\mathbb{Z}[t]
\end{equation}
where the coefficients $c_k$ are the integers introduced above. Following closely an idea of Sato, we will now argue that the roots of $h(t)$ constitute a constant multiple of the maximal splitting type 
$S_i(\mathbf{E},\mathbb{P}^{n_m})$.
More precisely, let $a_1>...>a_\alpha$, $\alpha\le\mathbf{r}$, be the distinct elements of
$S_i(\mathbf{E},\mathbb{P}^{n_m})$ of respective multiplicities
$r_1,...,r_\alpha$ in $S_i(\mathbf{E},\mathbb{P}^{n_m})$.
Then we claim that the roots of $h(t)$ are $N(i)a_1,...,N(i)a_{\alpha}$ of respective multiplicities $r_1,...,r_{\alpha}$.

The argument in \cite[pp. 138-139]{S1} shows that in order to prove this claim of $h(t)$ it
suffices to establish the vanishing of
$c_k(\mathbf{E}|_{Y_m}\otimes\pi_m^*\mathcal{O}_{\mathbb{P}^{n_m}}(-N(i)a_jH))$ for
$\mathbf{r}-r_j+1\le k\le\mathbf{r},\ 1\le j\le\alpha$.
By (\ref{again ck=}) it is enough to prove the vanishing of
$c_k(\tilde{\tau}_m^*(\mathbf{E}\otimes\mathbf{L}_i^{-a_j}|_{\overline{Y}_m}))$.
However, the proof of this fact is practically the same as in \cite{S1}.
Namely, one defines inductively vector bundles
$F_1:=\tilde{\phi}_m^*\mathbf{E}|_{\tilde{\pi}_m^{-1}(M_i(\mathbb{P}^{n_m}))},\
F_2,...,F_{\alpha}$ such that $\rk F_j=\sum_{p=j}^{\alpha}r_p$
on $\tilde{\pi}_m^{-1}(M_i(\mathbb{P}^{n_m}))$
which fit into the exact triples
\begin{equation}\label{exact F}
0\to r_j\mathcal{O}_{\tilde{\pi}_m^{-1}(M_i(\mathbb{P}^{n_m}))}\to
F_j\otimes(\mathbf{L}_i^{a_{j-1}-a_j}|_{\tilde{\pi}_m^{-1}(M_i(\mathbb{P}^{n_m}))})\to F_{j+1}\to0, \ \ \
1\le j\le\alpha-1,
\end{equation}
where $a_0:=0$. Using (\ref{codim<}) and applying the argument from
\cite[p. 139]{S1} to the triples (\ref{exact F}), we obtain
$c_k(\tilde{\tau}_m^*(\mathbf{E}\otimes\mathbf{L}_i^{-a_j}|_{\overline{Y}_m}))=0$ as desired.

Since $h(t)$ is independent of $x$ and $\mathbb{P}^{n_m}\in\Pi_{im}(x)$, the same applies to 
$S_i(\mathbf{E},\mathbb{P}^{n_m})$, i.e. the proposition is proved.
\end{proof}

\end{sub}

\vspace{5mm}
\begin{sub}{\bf Proof of Theorem 2.1.}\label{proof Thm 2.2}
\rm

\vspace{2mm}

\begin{proof}
According to (\ref{codim<}), the dimension of $M_i(\mathbb{P}^{n_m})$ is
greater than half of the dimension of $\mathbb{P}^{n_m}$, hence the varieties
$M_i(\mathbb{P}^{n_m})$ are connected for large enough $m$.
Consider the variety $\Gamma_{im}(x):=\{(l,\mathbb{P}^{n_m})\in B_{im}(x)\times \Pi_{im}(x)\ |\ l\in
M_i(\mathbb{P}^{n_m})\}$ with projections
${B}_{im}(x)\overset{p_1}\leftarrow {\Gamma}_{im}(x)\overset{p_2}\to {\Pi}_{im}(x)$.
Since $M_i(\mathbb{P}^{n_m})=p_2^{-1}(\mathbb{P}^{n_m})$
is connected for any $\mathbb{P}^{n_m}\in {\Pi}_{im}(x)$
and ${\Pi}_{im}(x)$ is connected by (A.ii), it follows that $\Gamma_{im}(x)$ is connected.
By definition, $\Gamma_{im}(x)$ is described as
\begin{equation}\label{Aim(x)}
\Gamma_{im}(x)=\underset{\mathbb{P}^{n_m}\in\Pi_{im}(x)}\sqcup M_i(\mathbb{P}^{n_m}),\ \ \ \ m\ge1.
\end{equation}

Similarly, consider the varieties
$\Gamma_{im}:=\{(x,l,\mathbb{P}^{n_m})\in X_m\times B_{im}\times\Pi_{im}\ |\ \ (l,\mathbb{P}^{n_m})\in\Gamma_{im}(x)\}$.
By construction,
$$
\Gamma_{im}=\underset{x\in X_m}\sqcup\Gamma_{im}(x),
$$
so that each $\Gamma_{im}$ is connected.
Moreover, there is a well-defined ind-variety  $\mathbf{\Gamma}_i:=\underset{\to}\lim\Gamma_{im}$.

Let $(x,l,\mathbb{P}^{n_m})\in\mathbf{\Gamma}_i$. If $\delta_1^{{\rm max}}$ is the maximal entry of
$\mathrm{Split}(\mathbf{E}|_l)=S_i(\mathbf{E},\mathbb{P}^{n_m})$, there is a well-defined subbundle
$\mathbf{E}_1(l)$ of $\mathbf{E}|_l$:
\begin{equation}\label{E1(E,l)}
\mathbf{E}_1(l):={\rm im}(H^0(l,\mathbf{E}|_l(-\delta_1^{{\rm max}}))\otimes
\mathcal{O}_l\overset{ev}\to\mathbf{E}|_l(-\delta_1^{{\rm max}}))\otimes
\mathcal{O}_l(\delta_1^{{\rm max}}).
\end{equation}
Set $\mathbf{r}_1:={\rm rk}\mathbf{E}_1(l)$ and consider the relative grassmannian
$\rho_1:\ \mathbf{G}(\mathbf{r}_1,\mathbf{E})\to\mathbf{X}$. According to Proposition \ref{proposition3.2},
$\delta_1^{{\rm max}}$ and $\mathbf{r}_1$ do not depend on the point
$(x,l,\mathbb{P}^{n_m})\in\mathbf{\Gamma}_i$. Thus there is a morphism of ind-varieties
\begin{equation}\label{morphism f1}
\mathbf{f}_{i1}:\mathbf{\Gamma}_i\to\mathbf{G}(\mathbf{r}_1,\mathbf{E}),\ \ \ (x,l)\mapsto\mathbf{E}_1(l)|_x,
\ \ \ x\in\mathbf{X}.
\end{equation}
Since $\mathbf{f}_{i1}(\Gamma_{im}(x))\subset\rho_1^{-1}(x)=G(\mathbf{r}_1,\mathbf{E}|_x)$,
by (\ref{Aim(x)}) we have
$$
\mathbf{f}_{i1}(M_i(\mathbb{P}^{n_m}))\subset G(\mathbf{r}_1,\mathbf{E}|_x),\ \ \
\mathbb{P}^{n_m}\in\Pi_{im}(x).
$$

According to (\ref{codim A})
$\codim_{\mathbb{P}^{n_m}}M_i(\mathbb{P}^{n_m})$
is bounded as $m\to\infty$. This means that, for large enough $m$, the morphism
$\mathbf{f}_{i1}:M_i(\mathbb{P}^{n_m})\to G(\mathbf{r}_1,\mathbf{E}|_x)$
satisfies the conditions of \cite[Prop. 3.2]{S1}, in which we set
$n=n_m,\ X=M_i(\mathbb{P}^{n_m}),\ Y=G(\mathbf{r}_1,\mathbf{E}|_x)$
and $f=\mathbf{f}_{i1}$. By this proposition,
$\mathbf{f}_{i1}|_{M_i(\mathbb{P}^{n_m})}$
is a constant map, hence it induces a morphism
$$
\phi_{i1}(x):\ \Pi_{im}(x)\to G(\mathbf{r}_1,\mathbf{E}|_x),\ \ \
\mathbb{P}^{n_m}\mapsto\mathbf{f}_{i1}(M_i(\mathbb{P}^{n_m})),\ \ \ \ x\in\mathbf{X}.
$$
Now (A.iv) implies that there exists a positive integer $m_1$ such that the morphism $\phi_{i1}(x)$
is a constant map for any $m\ge m_1.$
We thus obtain that (\ref{morphism f1}) induces a constant morphism
$$
\boldsymbol{\phi}_{i1}(x):\mathbf{\Pi}_i(x)\to G(\mathbf{r}_1,\mathbf{E}|_x).
$$

Consider the ind-variety $\mathbf{\Sigma}_i=\underset{\to}\lim \Sigma_{im}$,
where
$\Sigma_{im}:=\{(x,\mathbb{P}^{n_m})\in X_m\times\Pi_{im}\ |\ \mathbb{P}^{n_m}\in\Pi_{im}(x)\}$,
and let
$\mathbf{p}_i:\mathbf{\Sigma}_i\to\mathbf{X}$ be the natural projection with fibre
$\mathbf{\Pi}_i(x)$, $x\in\mathbf{X}$. The above constant morphisms
$\boldsymbol{\phi}_{i1}(x)$
extend to a morphism
$\boldsymbol{\phi}_{i1}:\mathbf{\Sigma}_i\to\mathbf{G}(\mathbf{r}_1,\mathbf{E})$
which is constant on the fibres of $\mathbf{p}_i$.
In addition, the morphism $\mathbf{f}_{i1}|_{\mathbf{M}_i(x)}$ is a constant map.
We thus obtain a well-defined morphism
\begin{equation}\label{morphism Fi10}
\boldsymbol{\Phi}_{i1}:\ \mathbf{X}\to\mathbf{G}(\mathbf{r}_1,\mathbf{E}),\ \
x\mapsto\mathbf{f}_{i1}(\mathbf{\Gamma}_i(x)).
\end{equation}

Let $\boldsymbol{\mathcal{S}}$ be the tautological bundle of rank $\mathbf{r}_1$ on
$\mathbf{G}(\mathbf{r}_1,\mathbf{E})$. Set
$\mathbf{E}_{1i}:=\boldsymbol{\Phi}^*_{i1}\boldsymbol{\mathcal{S}}$.
It follows now from (\ref{E1(E,l)}), (\ref{morphism f1}) and (\ref{morphism Fi10}) that $\mathbf{E}_{1i}$ is a
subbundle of $\mathbf{E}$ such that
$$
\mathbf{E}_{1i}|_l=\mathbf{E}_1(l)\simeq\mathbf{r}_1\mathcal{O}_{l}(\delta_1^{{\rm max}}),
\ \ \ \ l\in\mathbf{M}_i.
$$
Using the semicontinuity of $\dim H^0(l,\mathbf{E}_{1i}(-\delta_1^{{\rm max}})|_l),$ one checks immediately that the
last inequality is true for any $l\in\mathbf{B}_i.$

Applying the above argument to the quotient $\mathbf{E}'=\mathbf{E}/\mathbf{E}_{1i}$ etc., we obtain a filtration of
the bundle $\mathbf{E}$
$$
0\subset \mathbf{E}_{1i}\subset \mathbf{E}_{2i}\subset ... \subset \mathbf{E}_{\alpha i}=\subset \mathbf{E}_{1i}
$$
with $\mathbf{B}_i$-uniform
successive quotients
$\mathbf{F}_{ki}=\mathbf{E}_{ki}/\mathbf{E}_{k-1,i}.$

Fix now $j\in\Theta_{\mathbf{X}},$ $i\ne j$.
By applying the same procedure to all bundles $\mathbf{F}_{ki},$ we obtain a bundle filtration of $\mathbf{E}$ whose
quotients are $\mathbf{B}_i$-uniform and $\mathbf{B}_j$-uniform. After finitely many iterations we finally obtain a
filtration
\begin{equation}\label{*}
0=\mathbf{E}_0\subset\mathbf{E}_1\subset...\subset\mathbf{E}_s=\mathbf{E}
\end{equation}
of $\mathbf{E}$ with uniform successive quotients. This yields (i).

Note that any uniform vector bundle on $\mathbf{X}$ becomes linearly trivial after
twisting by an appropriate line bundle. This means that each successive quotient
$\mathbf{E}_k/\mathbf{E}_{k-1}$ is isomorphic to
$\mathbf{M}_k\otimes\mathbf{F}_k$ where
$\mathbf{M}_k$ is a line bundle and $\mathbf{F}_k$ is linearly trivial.
In addition, assume that the property T is satisfied. Then the bundles $\mathbf{F}_k$ are trivial, i.e.
$$
\mathbf{E}_k/\mathbf{E}_{k-1}\simeq
\mathrm{rk}(\mathbf{E}_k/\mathbf{E}_{k-1})\mathbf{M}_k,\ \ \ 1\le k\le s.
$$
Furthermore, for $p<k$
$$
\mathrm{Ext}^1(\mathbf{M}_k,\mathbf{M}_p)=
H^1(\mathbf{X},\mathbf{M}_k^*\otimes\mathbf{M}_p),
$$
and, according to a well known fact \cite[Theorem 4.5]{H3} (see also \cite[Proposition 10.3]{DPW}),
$$
H^1(\mathbf{X},\mathbf{M}_k^*\otimes\mathbf{M}_p)=
\underset{\leftarrow}\lim H^1(X_m,(\mathbf{M}_k^*\otimes\mathbf{M}_p)|_{X_m}).
$$
However, the above construction shows that
$$
(\mathbf{M}_k^*\otimes\mathbf{M}_p)|_{X_m}\simeq\otimes_iL_{im_i}^{\otimes a_i}
$$
with some $a_i$ negative. Therefore the vanishing part of property L yields
$$
H^1(X_m,(\mathbf{M}_k^*\otimes\mathbf{M}_p)|_{X_m})=0
$$
for $m\ge1$, and hence $\mathrm{Ext}^1(\mathbf{M}_k,\mathbf{M}_p)=0$
for $1\le p\le k\le s$. This is sufficient to conclude that the filtration
(\ref{*}) splits, i.e. (ii) follows.
\end{proof}

The rest of the paper (with exception of the appendix) is devoted to examples of linear ind-varieties satisfying
the properties L, A and T.

\end{sub}

\vspace{1cm}

\section{Linear ind-grassmannians satisfying the properties L,\ A,\ T}\label{sect4}
\vspace{0.5cm}

\begin{sub}{\bf Finite-dimensional orthogonal and symplectic grassmannians.}\label{subsection 2.1}

\rm
Let $V$ be a finite-dimensional vector space. In what follows we will consider, both symmetric and symplectic, quadratic forms $\Phi$ on $V$. Under the assumption that $\Phi$ is fixed, for any subspace $W\subset V$ we set $W^\perp:=\{v\in V\ |\ \Phi(v,w)=0$
for any $w\in W\}.$ Recall that $W$ is {\it isomorphic} (or $\Phi$-{\it isomorphic}) if $W\subset W^\perp.$

Let $\Phi\in S^2V^*$ be a non-degenerate symmetric form on $V$. For $\dim V\ge3$ and
$1\le k\le\left[\frac{\dim V}{2}\right]$, the {\it orthogonal grassmannian} $GO(k,V)$
is defined as the subvariety of
$G(k,V)$ consisting of all $\Phi$-isotropic $k$-dimensional subspaces of $V$. Unless
$\dim V=2n,$ $k=n,$ $GO(k,V)$
is a smooth irreducible variety. For
$\dim V=2n,$ $k=n,$ $GO(k,V)$
is smooth and has two irreducible components, both of which are isomorphic to
$GO(n-1,\tilde{V})$ for $\dim\tilde{V}=2n-1$.

If $\Phi\in\wedge^2V^*$ is a non-degenerate symplectic form on $V$, $\dim V=2n,$ we recall
that the {\it symplectic grassmannian} $GS(k,V)$ is a smooth irreducible subvariety of $G(k,V)$ consisting of all
$\Phi$-isotropic $k$-dimensional subspaces of $V$.

\end{sub}

\begin{sub}{\bf Definition of linear ind-grassmannians.}\label{subsection 4.2}

\rm
We start by recalling the definition of standard extension of grassmannians [PT3].

By a {\it standard extension of grassmannians} we understand an embedding of grassmannians
$f:G(k,V)\to G(k,V')$ for $\dim V\ge\dim V,$ $k'\ge k$, given by the formula
\begin{equation}\label{for21}
f:V_k\mapsto V_k\oplus W,
\end{equation}
for some fixed isomorphism $V'\simeq V\oplus \hat{W}$ and a fixed subspace $W\subset\hat{W}$ of dimension $k'-k.$
Respectively, by a {\it standard extension of orthogonal} respectively, of
{\it symplectic grassmannians} we
understand an embedding of isotropic grassmannians $f:GO(k,V)\to GO(k,V')$
(respectively, $f:GS(k,V)\to GS(k,V')$) given by the formula (\ref{for21}) for some fixed
orthogonal (respectively, symplectic) isomorphism
$V'\simeq V\oplus \hat{W}$ and a fixed isotropic subspace $W\subset\hat{W}$
of dimension $k'-k$, cf. \cite[Definitions 3.2. and 3.5]{PT3}. Note that standard extensions are linear morphisms.

Next we recall the definition of a standard ind-grassmannian [PT3].

\begin{definition}\label{G(k)}
Fix an infinite chain of vector spaces
\begin{equation}\label{for22}
V_{n_1}\subset V_{n_2}\subset...\subset V_{n_m}\subset V_{n_{m+1}}\subset...
\end{equation}
of dimensions $n_m,\ n_m< n_{m+1}$.

a) For an integer $k$, $1\le k< n_1$, set $\mathbf{G}(k):=\underset{\to}\lim G(k,V_{n_m})$ where
$$
G(k,V_{n_1})\hookrightarrow G(k,V_{n_2})\hookrightarrow...\hookrightarrow G(k,V_{n_m})\hookrightarrow
G(k,V_{n_{m+1}})\hookrightarrow...
$$
is the chain of canonical inclusions of grassmannians induced by (\ref{for22}).

b) For a sequence of integers $1\le k_1< k_2<...$ such that $ k_m< n_m,$  $\underset{m\to\infty}\lim(n_m-k_m)=\infty$,
set
$\mathbf{G}(\infty):=\underset{\to}\lim G(k_m,V_{n_m})$ where
$$
G(k_1,V_{n_1})\hookrightarrow G(k_2,V_{n_2})\hookrightarrow...\hookrightarrow G(k_m,V_{n_m})\hookrightarrow
G(k_{m+1},V_{n_{m+1}})\hookrightarrow...
$$
is an arbitrary chain of standard extensions of grassmannians.

c) Assume that $V_{n_m}$ are endowed with compatible non-degenerate symmetric (respectively, symplectic) forms $\Phi_m$.
In the symplectic case $\frac{n_m}{2}\in\mathbb{Z}_+$.
For an integer $k,\ 1\le k\le[\frac{n_1}{2}]$, set
 $\mathbf{G}\mathrm{O}(k,\infty):=\underset{\to}\lim GO(k,V_{n_m})$ (respectively,
 $\mathbf{G}\mathrm{S}(k,\infty):=\underset{\to}\lim GS(k,V_{n_m})$) where
$$
GO(k,V_{n_1})\hookrightarrow GO(k,V_{n_2})\hookrightarrow...\hookrightarrow GO(k,V_{n_m})\hookrightarrow
GO(k,V_{n_{m+1}})\hookrightarrow...
$$
(respectively,
$$
GS(k,V_{n_1})\hookrightarrow GS(k,V_{n_2})\hookrightarrow...\hookrightarrow GS(k,V_{n_m})\hookrightarrow
GS(k,V_{n_{m+1}})\hookrightarrow...)
$$
is the chain of inclusions of isotropic grassmannians induced by (\ref{for22}).

d) For a sequence of integers $1\le k_1< k_2<...$ such that $k_m<[\frac{n_m}{2}],$
$\underset{m\to\infty}\lim([\frac{n_m}{2}]-k_m)=\infty$,
set $\mathbf{G}\mathrm{O}(\infty,\infty)=\underset{\to}\lim G\mathrm{O}(k_m,V_{n_m})$
(respectively, $\mathbf{G}\mathrm{S}(\infty,\infty):=\underset{\to}\lim GS(k_m,V_{n_m})$)
where
\begin{equation}\label{chain O}
GO(k_1,V_{n_1})\hookrightarrow GO(k_2,V_{n_2})\hookrightarrow...\hookrightarrow GO(k_m,V_{n_m})\hookrightarrow
GO(k_{m+1},V_{n_{m+1}})\hookrightarrow...
\end{equation}
(respectively,
\begin{equation}\label{chain S}
GS(k_1,V_{n_1})\hookrightarrow GS(k_2,V_{n_2})\hookrightarrow...\hookrightarrow GS(k_m,V_{n_m})\hookrightarrow
GS(k_{m+1},V_{n_{m+1}})\hookrightarrow...)
\end{equation}
is an arbitrary chain of standard extensions of isotropic grassmannians.

e) In the symplectic case, consider a sequence of integers $1\le k_1< k_2<...$ such that $k_m\le\frac{n_m}{2}$,
$\underset{m\to\infty}\lim(\frac{n_m}{2}-k_m)=k\in\mathbb{N}$, and set
$\mathbf{G}\mathrm{S}(\infty,k):=\underset{\to}\lim GS(k_m,V_{n_m})$
for any chain of standard extensions (\ref{chain S}). In the orthogonal case, assume first that $\dim V_{n_m}$ are even.
Then set $\mathbf{G}\mathrm{O}^0(\infty,k):=\underset{\to}\lim GO(k_m,V_{n_m})$ for a chain (\ref{chain O}) where
$k_m<\frac{n_m}{2}$, $\underset{m\to\infty}\lim(\frac{n_m}{2}-k_m)=k\in\mathbb{N},\ k\ge2$. Finally, consider the
orthogonal case under the assumption that $\dim V_{n_m}$ are odd. Then set
$\mathbf{G}\mathrm{O}^1(\infty,k):=\underset{\to}\lim GO(k_m,V_{n_m})$ for a chain (\ref{chain O}) where
$k_m\le[\frac{n_m}{2}]$, $\underset{m\to\infty}\lim([\frac{n_m}{2}]-k_m)=k\in\mathbb{N}$.
\end{definition}

In particular,
$\mathbf{P}^{\infty}=\mathbf{G}(1)\simeq \mathbf{G}\mathrm{S}(1)$.
Note that the above standard ind-grassmannians are well-defined, i.e. a standard
ind-grassmannian does not depend, up to an isomorphism of ind-varieties, on the specific
chain of standard embeddings used in its definition. Furthermore, the main result of
[PT3] claims that, with the exception of the isomorphism
$\mathbf{P}^{\infty}\simeq \mathbf{G}\mathrm{S}(1)$,
the standard ind-grassmannians are pairwise non-isomorphic as ind-varieties.

In all cases the maximal exterior power of a tautological bundle generated by its global
sections yields an ample line bundle $\mathcal{O}_{X_m}(1)$, where
$X_m=G(k_m,V_{n_m}),$ $GO(k_m,V_{n_m}),$ $GS(k_m,V_{n_m})$.
It is well-known that $\mathcal{O}_{X_m}(1)$ generates
$\mathrm{\Pic}X_m$. Moreover, if
$i_m:X_m\hookrightarrow X_{m+1}$
is one of the embeddings in Definition \ref{G(k)}, there is an isomorphism
$i_m^*\mathcal{O}_{X_{m+1}}(1)\simeq\mathcal{O}_{X_m}(1)$.
This allows us to conclude that
$\mathbf{X}=\underset{\to}\lim X_m$ is a linear ind-variety and $\mathrm{\Pic}\mathbf{X}$ is generated by
$\mathcal{O}_{\mathbf{X}}(1):=\underset{\leftarrow}\lim\mathcal{O}_{X_m}(1)$.

\end{sub}

\begin{sub}{\bf BVTS Theorem for $\mathbf{G}(\infty),$
$\mathbf{G}\mathrm{O}(\infty,\infty)$, $\mathbf{G}\mathrm{S}(\infty,\infty)$,
$\mathbf{G}\mathrm{O}^1(\infty,0)$ and $\mathbf{G}\mathrm{S}(\infty,0)$.}\label{section 4.3}

\rm
We first note that, if $\mathbf{X}=\mathbf{G}(k)$, $\mathbf{G}\mathrm{O}(k,\infty)$,
$\mathbf{G}\mathrm{S}(k,\infty)$
there is a tautological rank-$k$
bundle $\mathbf{S}$ on $\mathbf{X}$. If $k\ge2$, this bundle is not isomorphic to a direct sum of line
bundles, hence the BVTS theorem does not hold for these ind-grassmannians.
Moreover, it is known [S2] that, for $\mathbf{X}=\mathbf{G}(k)$, $\mathbf{G}\mathrm{O}(k,\infty)$,
$\mathbf{X}=\mathbf{G}\mathrm{S}(k,\infty)$, any simple vector bundle of finite rank on
$\mathbf{X}$, i.e. a vector bundle which does not have a non-trivial proper subbundle,
is a direct summand in a tensor power of $\mathbf{S}$.

\begin{theorem}\label{theorem4.3}
Any vector bundle $ \mathbf{E}$ on $ \mathbf{X}\simeq \mathbf{G}(\infty),$ $\mathbf{G}\mathrm{O}(\infty,\infty)$, $\mathbf{G}\mathrm{S}(\infty,\infty)$, $\mathbf{G}\mathrm{O}^1(\infty,0),$ $\mathbf{G}\mathrm{S}(\infty,0)$
is isomorphic to
$\underset{i}\oplus\mathcal{O}_{\mathbf{X}}(k_i)$
for some $k_i\in\mathbb{Z}$.
\end{theorem}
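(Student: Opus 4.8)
The plan is to deduce this statement from \refth{E on linear X}, by verifying that each of the five ind-grassmannians $\mathbf{X}$ satisfies the properties L, A and T with respect to the single line bundle $\mathbf{L}:=\mathcal{O}_{\mathbf{X}}(1)$; then $\Theta_{\mathbf{X}}$ is a one-element set and, by subsection \ref{subsection 4.2}, $\Pic\mathbf{X}=\mathbb{Z}\cdot\mathcal{O}_{\mathbf{X}}(1)$. Passing to a subsequence of the defining chain, which changes neither $\mathbf{X}$ nor the properties in question, we may assume throughout that $\dim X_m\ge 2$ and, where needed, that $n_m$ is as large as we like. Besides the statement about $\Pic\mathbf{X}$, property L then requires only that $H^1(X_m,\mathcal{O}_{X_m}(a))=0$ for $a<0$, and this follows from the Kodaira--Nakano vanishing theorem, since $\mathcal{O}_{X_m}(-a)$ is ample and $\dim X_m\ge 2$. (For $\mathbf{X}\simeq\mathbf{G}(\infty)$ the assertion is already contained in \cite{DP}; we include it for uniformity.)

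The main work is property A, for which we describe the lines on the (isotropic) grassmannians. Write a point of $X_m$ as $x=[W]$, $W\subset V_{n_m}$ a (isotropic) subspace of dimension $k_m$. The lines of $X_m$ through $x$ are precisely the pencils $\{U\mid W'\subset U\subset W''\}$ with $\dim W'=k_m-1$, $\dim W''=k_m+1$ and all intermediate subspaces (isotropic); hence $B_{im}(x)$ is naturally identified with $\mathbb{P}(W^*)\times Z_m$, where $Z_m=\mathbb{P}(V_{n_m}/W)$ for $\mathbf{G}(\infty)$, $Z_m=\mathbb{P}(W^\perp/W)$ for $\mathbf{G}\mathrm{S}(\infty,\infty)$, and $Z_m$ is the smooth quadric of isotropic lines in $W^\perp/W$ for $\mathbf{G}\mathrm{O}(\infty,\infty)$; for $\mathbf{G}\mathrm{S}(\infty,0)$ and $\mathbf{G}\mathrm{O}^1(\infty,0)$ no isotropic $(k_m+1)$-subspace contains $W$, so $B_{im}(x)=\mathbb{P}(W^*)\simeq\mathbb{P}^{k_m-1}$. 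We take $\mathbf{\Pi}_i$ to be the family of projective ind-subspaces of $\mathbf{B}_i$ obtained by fixing a compatible chain of points of the factors $Z_m$ and letting $W'$ range over $\mathbb{P}(W^*)$; in the two maximal-isotropic cases $\mathbf{\Pi}_i$ consists of the single $\mathbf{P}^{\infty}=\mathbf{B}_i(y)$ attached to each $y\in\mathbf{X}$. Since $\dim\mathbb{P}(W^*)=k_m-1\to\infty$, these are genuine projective ind-subspaces, and with this choice $\Pi_{im}(x)\simeq Z_m$ (a single point in the two maximal cases). Then: (A.i) holds because $\psi_{im}:X_m\to\mathbb{P}(H^0(X_m,\mathcal{O}_{X_m}(1))^*)$ is the Plücker embedding, respectively the minimal homogeneous embedding, under which the pencils are exactly the lines lying on $\psi_{im}(X_m)$, so that $B_{im}(x)$ maps isomorphically onto a subfamily of the lines through $\psi_{im}(x)$; (A.ii) holds because $\Pi_{im}(x)\simeq Z_m$ is connected — a projective space, a point, or (for $m$ large) a smooth quadric of positive dimension; (A.iii) holds because a point $(W',W'')$ of $\mathbb{P}(W^*)\times Z_m$ lies on the ind-subspace in $\mathbf{\Pi}_i(x)$ determined by $W''$, and trivially so in the two maximal cases; and for (A.iv) one has $\dim\Pi_{im}(x)=\dim Z_m\to\infty$ in the non-maximal cases, since $\lim(n_m-k_m)=\infty$, respectively $\lim(\frac{n_m}{2}-k_m)=\infty$, respectively $\lim([\frac{n_m}{2}]-k_m)=\infty$, whence a morphism $\Pi_{im}(x)\to Y$ with $\dim Y\le d$ is constant for $m\gg0$ by \cite[Prop. 3.2]{S1} when $Z_m$ is a projective space, by the same statement applied to the linear subspaces $\mathbb{P}^{\lfloor(\dim Z_m)/2\rfloor}\subset Z_m$ (whose dimensions also tend to infinity, and through which $Z_m$ is covered and connected) when $Z_m$ is a quadric, and trivially when $\Pi_{im}(x)$ is a point.

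For property T, let $\mathbf{F}=\underset{\leftarrow}\lim F_m$ be a linearly trivial bundle on $\mathbf{X}$. Each $F_m$ then restricts trivially to every line of $X_m$, and since $X_m$ is an isotropic grassmannian, such a uniform bundle of splitting type $(0,\dots,0)$ is trivial; so $F_m\simeq r\mathcal{O}_{X_m}$ with $r=\mathrm{rk}\,\mathbf{F}$. As $X_m$ is connected and complete, $\Aut(r\mathcal{O}_{X_m})=\GL_r(\mathbb{C})$, so after choosing trivializations the gluing isomorphisms become constant invertible matrices, which are eliminated by an inductive change of trivializations; hence $\mathbf{F}\simeq r\mathcal{O}_{\mathbf{X}}$ is trivial, i.e.\ property T holds.

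Having verified L, A and T, we apply \refth{E on linear X} to conclude that $\mathbf{E}\simeq\underset{i}\oplus\mathcal{O}_{\mathbf{X}}(k_i)$ for suitable $k_i\in\mathbb{Z}$. The crux of the argument is property A: one must describe the line geometry of the isotropic grassmannians uniformly in $m$ in order to identify $B_{im}(x)$ and $\Pi_{im}(x)$, and then establish the morphism-rigidity statement (A.iv) — in particular the quadric analogue of \cite[Prop. 3.2]{S1} needed for $\mathbf{G}\mathrm{O}(\infty,\infty)$. Property T passes easily to the limit, but rests on the nontrivial finite-dimensional fact that a linearly trivial vector bundle on an isotropic grassmannian is trivial.
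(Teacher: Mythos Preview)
Your overall strategy matches the paper's exactly: verify properties L, A, T for each ind-grassmannian and invoke \refth{E on linear X}. Your treatment of $\mathbf{G}(\infty)$, $\mathbf{G}\mathrm{O}(\infty,\infty)$, $\mathbf{G}\mathrm{S}(\infty,\infty)$ and $\mathbf{G}\mathrm{S}(\infty,0)$ is correct and essentially identical to the paper's (the paper uses Borel--Weil--Bott for the vanishing in L where you use Kodaira, and it proves the finite-dimensional triviality statement underlying T as Proposition~\ref{proposition7.4}).

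There is, however, a genuine gap in your handling of $\mathbf{G}\mathrm{O}^1(\infty,0)$. For the odd orthogonal maximal isotropic grassmannian $GO(k,V_{2k+1})$ the Pl\"ucker line bundle $\wedge^k S^*$ is \emph{not} the generator of the Picard group; it is twice the generator (the generator is the half-spin bundle, since the highest weight of $\wedge^k V_{2k+1}$ is $2\omega_k$). Hence either property L fails for your choice of $\mathbf{L}$, or---if one takes $\mathbf{L}$ to be the true generator---your description of $\mathbf{B}_i(x)$ is wrong. In the spin embedding the lines through a point $W$ are \emph{not} parametrized by hyperplanes $W'\subset W$: already for $k=2$ one has $GO(2,V_5)\simeq\mathbb{P}^3$, so lines through a point form a $\mathbb{P}^2$, whereas $\mathbb{P}(W^*)\simeq\mathbb{P}^1$. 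Consequently your single ind-subspace $\mathbf{P}^\infty$ does not fill $\mathbf{B}_i(x)$, and (A.iii) fails. The paper handles this case by passing to the even model $GO(k_m,V_{2k_m+1})\simeq GO(k_m+1,V_{2k_m+2})^*$, where lines through a maximal isotropic $U$ correspond to codimension-two subspaces of $U$, so that $B_{\tilde m}(x)\simeq G(k_{\tilde m}-2,k_{\tilde m})$ and $\Pi_{\tilde m}(x)\simeq\mathbb{P}^{k_{\tilde m}-1}$; with this correct description (A.ii)--(A.iv) follow as in the other cases.
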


For $ \mathbf{X}=\mathbf{G}(\infty)$ this is proved in [DP] (see also [PT1, Section 4]). For the remaining standard
ind-grassmannians the claim of Theorem~\ref{theorem4.3} follows from Theorem \ref{E on linear X} and the following
theorem.

\begin{theorem}\label{theorem4.4}
Let $ \mathbf{X}\simeq \mathbf{G}\mathrm{O}(\infty,\infty)$,
$\mathbf{G}\mathrm{S}(\infty,\infty)$, $\mathbf{G}\mathrm{O}^1(\infty,0)$ or
$\mathbf{G}\mathrm{S}(\infty,0).$ Then $\mathbf{X}$ satisfies the properties {\rm L, A} 
and {\rm T}.\footnote{The reader can check that $\mathbf{G}(\infty)$ also satisfies the properties L, A, T.}

\end{theorem}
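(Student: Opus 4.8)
\emph{Plan.} The plan is to verify the three properties in turn; in each case one reduces, after replacing $\{X_m\}$ by a cofinal subchain, to a statement about finite-dimensional (isotropic) grassmannians and their lines. Throughout, $\Theta_{\mathbf X}$ consists of a single index, $\mathbf L_1=\mathcal O_{\mathbf X}(1)=\underset{\longleftarrow}\lim\mathcal O_{X_m}(1)$, and $\mathbf B_1$ is the family of minimal rational curves (``isotropic pencils'') on the $X_m$: for $X_m=G(k_m,V_{n_m})$ a line is a pencil $\{W\mid A\subset W\subset B\}$ with $\dim A=k_m-1$, $\dim B=k_m+1$, and in the orthogonal and symplectic cases $B$ is in addition required to be isotropic. \emph{Property} L is easy: it was already recorded that $\mathrm{Pic}\,X_m=\mathbb Z\cdot\mathcal O_{X_m}(1)$ is free of rank one and that the embeddings of the chain pull $\mathcal O_{X_{m+1}}(1)$ back to $\mathcal O_{X_m}(1)$, so the only point is the vanishing $H^1(X_m,\mathcal O_{X_m}(-a))=0$ for $a>0$, which --- after passing to a cofinal subchain with $\dim X_m\ge2$ --- is Kodaira vanishing for the ample line bundle $\mathcal O_{X_m}(a)$ on the smooth projective variety $X_m$.

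\emph{Property} A is the main part. The task is to construct $\mathbf\Pi_1$. Fix a compatible chain of flags $A_m\subset W_{0,m}\subset\widetilde U_m$ in $V_{n_m}$, where $\dim A_m=k_m-1$, $W_{0,m}$ is the prospective centre, and in the isotropic cases $\widetilde U_m$ is isotropic; the lines $\{W\mid A_m\subset W\subset B,\ W_{0,m}\subset B\subset\widetilde U_m\}$ form a projective subspace of $B_{1m}$, and if $\dim\widetilde U_m$ is made to grow with $m$ --- but, in the orthogonal case, kept strictly below maximal so that the relevant isotropic grassmannian remains irreducible --- its dimension tends to infinity and these subspaces glue to a projective ind-subspace $\mathbf P^\infty\subset\mathbf B_1$; varying the flag gives $\mathbf\Pi_1$. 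For $\mathbf{GO}^1(\infty,0)$ and $\mathbf{GS}(\infty,0)$ the set of lines through a point is already a projective space $\mathbb P(W_{0,m}^*)\cong\mathbb P^{k_m-1}$, so $\mathbf B_1(x)$ is itself a $\mathbf P^\infty$ and one takes $\mathbf\Pi_1(x)=\{\mathbf B_1(x)\}$. Then (A.i) holds since $\mathcal O_{X_m}(1)$ defines the minimal embedding $\psi_{1m}$ (Pl\"ucker, resp. spinor), a closed immersion carrying distinct pencils through $x$ to distinct lines through $\psi_{1m}(x)$; (A.ii) holds since $\Pi_{1m}(x)$ --- a point, or $\mathbb P(W_{0,m}^*)$, or a product of $\mathbb P(W_{0,m}^*)$ with an irreducible isotropic grassmannian of $W_{0,m}^\perp/W_{0,m}$ --- is homogeneous under the stabilizer of $x$, hence connected; (A.iii) holds since $B_{1m}(x)$ is the product of $\mathbb P(W_{0,m}^*)$ with a projective space (the cases $\mathbf G(\infty)$, $\mathbf{GS}(\infty,\infty)$) or with a quadric (the case $\mathbf{GO}(\infty,\infty)$), through every point of which there passes a linear subspace of any admissible dimension; and (A.iv) holds since $k_m\to\infty$ in all cases under consideration, so for $m\gg0$ the variety $\Pi_{1m}(x)$ is either a point or is swept out by pairwise chain-connected projective spaces of dimension $>d$, while any morphism from $\mathbb P^N$ to a variety of dimension $<N$ is constant.

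\emph{Property} T. Let $\mathbf E=\underset{\longleftarrow}\lim E_m$ be linearly trivial; then each $E_m$ is a uniform bundle on $X_m$ of splitting type $(0,\dots,0)$ along $B_{1m}$, hence trivial by the classical theory of uniform bundles of trivial splitting type on (isotropic) grassmannians (alternatively, $\mathbf E$ restricts trivially to every projective ind-subspace $\mathbf P^\infty\subset\mathbf X$ by the BVTS theorem for $\mathbf P^\infty$, and such ind-subspaces join any two points of $\mathbf X$). Since $H^0(X_m,\mathcal O_{X_m})=\mathbb C$, the trivializations $E_m\cong\mathcal O_{X_m}^{\mathrm{rk}\,\mathbf E}$ can be adjusted inductively so as to be compatible with the isomorphisms $E_m\cong\phi_m^*E_{m+1}$, whence $\mathbf E\cong\mathcal O_{\mathbf X}^{\mathrm{rk}\,\mathbf E}$.

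The true obstacle is Property A for $\mathbf{GO}(\infty,\infty)$: there $B_{1m}(x)$ carries a genuine quadric factor, so the linear ind-subspaces of $\mathbf B_1$ have to be built out of maximal linear subspaces of quadrics, and one must simultaneously arrange that these sweep out $B_{1m}(x)$ (for (A.iii)) and that the isotropic-grassmannian parameter stays irreducible --- avoiding the two-component maximal case --- while its dimension still tends to infinity. The maximal-isotropic ind-grassmannians $\mathbf{GO}^1(\infty,0)$ and $\mathbf{GS}(\infty,0)$, where the geometry of minimal rational curves collapses to that of $\mathbf P^\infty$, require a separate but much simpler treatment.
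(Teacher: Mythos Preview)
Your overall strategy mirrors the paper's, and for $\mathbf{GS}(\infty,\infty)$, $\mathbf{GS}(\infty,0)$ your outline is essentially correct. There is, however, a genuine error in your treatment of $\mathbf{GO}^1(\infty,0)$, and your construction of $\mathbf\Pi_1$ for $\mathbf{GO}(\infty,\infty)$ differs from (and is more cumbersome than) the paper's.

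\textbf{The error for $\mathbf{GO}^1(\infty,0)$.} You assert that for the maximal orthogonal case the set of lines through a point $W_{0,m}$ is the projective space $\mathbb P(W_{0,m}^*)$. This is false. After the standard identification $GO(k_m,V_{2k_m+1})\cong GO(k_m+1,V_{2k_m+2})^*$ with a component of the even spinor variety, a line is determined by an isotropic subspace of codimension $2$ in the maximal isotropic, so the lines through $x=V_{k_m}$ are parametrized by $G(k_m-2,V_{k_m})\cong G(2,V_{k_m}^*)$, a genuine grassmannian for $k_m\ge 4$. Consequently $\mathbf B_1(x)$ is \emph{not} a projective ind-space and your choice $\mathbf\Pi_1(x)=\{\mathbf B_1(x)\}$ does not satisfy the definition of $\mathbf\Pi_i$. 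The paper fixes this by taking $\Pi_{\tilde m}(x)\cong\mathbb P(V_{k_{\tilde m}}^*)$: to each hyperplane $V_{k_{\tilde m}-1}\subset V_{k_{\tilde m}}$ one associates the projective subspace of lines whose $V_{k_{\tilde m}-2}$ lies in $V_{k_{\tilde m}-1}$, a $\mathbb P^{k_{\tilde m}-2}$. Properties (A.ii)--(A.iv) then follow since $\Pi_{\tilde m}(x)$ is a projective space of growing dimension.

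\textbf{A different $\mathbf\Pi_1$ in the non-maximal cases.} For $\mathbf{GO}(\infty,\infty)$ and $\mathbf{GS}(\infty,\infty)$ you fix the ``bottom'' $A_m$ of the pencil and let the ``top'' $B$ range inside a larger isotropic $\widetilde U_m$, obtaining $\Pi_{1m}(x)\cong\mathbb P(W_{0,m}^*)\times GO(r_m,W_{0,m}^\perp/W_{0,m})$ (resp.\ $GS$). The paper does the dual thing: it fixes the top $V_{k_m+1}$ and lets the bottom vary, so that $\Pi_m=\{(V_{k_m},V_{k_m+1})\}$ and $\Pi_{\tilde m}(x)$ is simply the quadric $GO(1,V_{k_{\tilde m}}^\perp/V_{k_{\tilde m}})$ (resp.\ the projective space $\mathbb P(V_{k_{\tilde m}}^\perp/V_{k_{\tilde m}})$). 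Both approaches work, but the paper's avoids the auxiliary choice of $r_m=\dim\widetilde U_m-k_m$ and the attendant care in the orthogonal case about irreducibility; it also makes (A.iv) immediate since a single Picard-rank-one variety of growing dimension admits no non-constant morphism to a fixed-dimensional target.

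\textbf{Property T.} Your appeal to a ``classical theory of uniform bundles of trivial splitting type on (isotropic) grassmannians'' is not a reference one can cite directly for $GO(k,V)$ and $GS(k,V)$. The paper supplies this as Proposition~\ref{proposition7.4} in the appendix, proved by induction on $k$ via the incidence correspondence with $GO(k\pm 1,V)$ (resp.\ $GS$) and the quadric case (Proposition~\ref{trivial on lines}). Your alternative sketch via projective ind-subspaces joining points is plausible but would itself need justification; the inductive argument is self-contained.
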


\begin{proof}
$\mathbf{X}$ satisfies the property L as $\mathcal{O}_{\mathbf{X}}(1)$ generates
$\Pic \mathbf{X}$, and $H^1(X_m,\mathcal{O}_{X_m}(a))$ vanishes for all $a$ and sufficiently large $m$
by Borel-Weil-Bott's Theorem. Furthermore, the property T follows from
Proposition \ref{proposition7.4} below.

It remains to establish the property A. Part (A.i) holds here simply because
$\mathcal{O}_{\mathbf{X}}(1)$ is very ample. We therefore discuss parts (A.ii)-(A.iv).

Let $\mathbf{X}=\mathbf{G}\mathrm{O}(\infty,\infty)=\underset{\to}\lim GO(k_m,V_{n_m})$.
For $m\ge1,$ the base $B_m$ of the family of projective lines on $GO(k_m,V_{n_m})$ coincides with the variety of isotropic flags of type
$(k_m-1,k_m+1)$ in $V_{n_m}$:
\begin{equation}\label{FlO}
B_m=\{(V_{k_m-1},V_{k_m+1})\in GO(k_m-1,V_{n_m})\times GO(k_m+1,V_{n_m})\ |\ V_{k_m-1}\subset V_{k_m+1}\}
\end{equation}
[PT3, Lemma 2.2(i)]. Furthermore, set
\begin{equation}\label{Pim}
\Pi_m:=\{(V_{k_m},V_{k_m+1})\in GO(k_m,V_{n_m})\times GO(k_m+1,V_{n_m})\ |\ V_{k_m}\subset V_{k_m+1}\}.
\end{equation}
A point $y=(V_{k_m},V_{k_m+1})\in\Pi_m$ corresponds to the projective subspace $G(k_m-1,V_{n_m})\times\{V_{k_m+1}\}\subset B_m.$

It is easy to see that $\mathbf{\Pi}:=\underset{\to}\lim \Pi_m$ is a well-defined ind-variety and that a point of
$\mathbf{\Pi}$ represents a projective ind-subspace of $\mathbf{B}:=\underset{\to}\lim B_m.$

Next, (\ref{FlO}) together with [PT3, Lemma 2.2(iv)], implies that for any point
$x=\{V_{k_m}\}\in GO(k_m,V_{n_m})$,
\begin{equation}\label{Bm(x)}
B_{\tilde m}(x):=\{\mathbb{P}^1\in B_{\tilde m}\ |\ \mathbb{P}^1\ni x\}\simeq
\end{equation}
$$
\mathbb{P}((\phi_{{\tilde m}-1}\circ...\circ\phi_m)(V_{k_m})^*)\times GO(1,(\phi_{{\tilde m}-1}\circ...\circ\phi_m)(V_{k_m})^\perp/(\phi_{{\tilde m}-1}\circ...\circ\phi_m)(V_{k_m})),\ \ \tilde{m}\ge m,
$$
and
\begin{equation}\label{Pim(x)}
\Pi_{{\tilde m}}(x):=\{((\phi_{{\tilde m}-1}\circ...\circ\phi_m)(V_{k_m}),
V_{k_{\tilde m+1}})\in\Pi_{{\tilde m}}\}\simeq
GO(1,V_{k_{\tilde m}}^\perp/V_{k_{\tilde m}}),\ \ \tilde{m}\ge m.
\end{equation}

Since the quadrics
$GO(1,(\phi_{{\tilde m}-1}\circ...\circ\phi_m)(V_{k_m})^\perp/(\phi_{{\tilde m}-1}\circ...\circ\phi_m)(V_{k_m}))$ are connected, (A.ii) follows from (\ref{FlO}). Furthermore, as for
each $\tilde{m}\ge1$ the variety $GO(1,V_{k_{\tilde m}}^\perp/V_{k_{\tilde m}})$ is a
smooth quadric hypersurface in the projective space
$\mathbb{P}^{n_{\tilde m}-2k_{\tilde m}-1}$,
(\ref{Bm(x)}) and (\ref{Pim(x)}) directly imply (A.iii) and (A.iv).

In the remaining cases the same argument goes through if one makes the following
modifications.

If $ \mathbf{X}=\mathbf{G}\mathrm{S}(\infty,\infty)$, the formulas for $B_m$, $\Pi_m$ and
$\Pi_{{\tilde m}}(x)$ are the same as (\ref{FlO}), (\ref{Pim}), (\ref{Bm(x)}) and
(\ref{Pim(x)}) respectively, with $GO$ substituted by $GS$ (use [PT3, Lemma 2.5]).
Note also that $GS(1,V_{k_{\tilde m}}^\perp/V_{k_{\tilde m}})$
is isomorphic to the projective space
$\mathbb{P}(V_{k_{\tilde m}}^\perp/V_{k_{\tilde m}})$.

For
$\mathbf{X}=\mathbf{G}\mathrm{O}^1(\infty,0)=\underset{\to}\lim GO(k_m,V_{2k_m+1})$
we first identify
$GO(k_m,V_{2k_m+1})$
with an irreducible component
$GO(k_m+1,V_{2k_m+2})^*$ of $GO(k_m+1,V_{2k_m+2})$ - see \cite[Section 2.3]{PT3}.
Consequently,
$ \mathbf{X}\simeq\underset{\to}\lim GO(k_m,V_{2k_m})^*$.
Next, instead of (\ref{FlO})-(\ref{Pim}) one has
$B_m\simeq GO(k_m-2,V_{2k_m}),$ $\Pi_m\simeq GO(k_m-1,V_{2k_m}),$ $m\ge1.$
Respectively, instead of (\ref{Bm(x)})-(\ref{Pim(x)}) one has
$B_{\tilde m}(x)\simeq G(k_{\tilde m}-2,
(\phi_{{\tilde m}-1}\circ...\circ\phi_m)(V_{k_m}))$ for $x=\{V_{k_m}\}$.
The latter fact can be proved by an argument similar to that of [PT3, Lemma 2.2].
In addition,
$\Pi_{{\tilde m}}(x)\simeq \mathbb{P}((\phi_{{\tilde m}-1}\circ...\circ\phi_m)(V_{k_m})^*),$ $\tilde{m}\ge m$.

For $ \mathbf{X}=\mathbf{G}\mathrm{S}(\infty,0)=\underset{\to}\lim GS(k_m,V_{2k_m})$
one can show that (\ref{FlO})-(\ref{Pim}) can be replaced by:
$B_m\simeq GS(k_m-1,V_{2k_m}),$ $\Pi_m\simeq GS(k_m,V_{2k_m})$.
Respectively, (\ref{Bm(x)})-(\ref{Pim(x)}) for $x=\{V_{k_m}\}\in GS(k_m,V_{2k_m})$
can be replaced by
$B_{\tilde m}(x)\simeq G(k_{\tilde m}-1,(\phi_{{\tilde m}-1}\circ...\circ\phi_m)(V_{k_m}))$,
and $\Pi_{{\tilde m}}(x)\simeq \{\mathbb{P}((\phi_{{\tilde m}-1}\circ...\circ\phi_m)(V_{k_m})^*)\}$ is a point
for $\tilde{m}\ge m$.
\end{proof}

\end{sub}

\vspace{1cm}

\section{Linear sections of $\mathbf{G}(\infty)$, $\mathbf{G}\mathrm{O}(\infty,\infty)$,
$\mathbf{G}\mathrm{S}(\infty,\infty)$}\label{sec5}

\vspace{0.5cm}

\begin{sub}{\bf Linear sections of finite-dimensional grassmannians.}\label{5.1}

\rm
Let $G=G(k,V)$, $GO(k,V)$, $GS(k,V)$.
Assume $1\le k<\dim V-1$ for $G=G(k,V)$, and $1\le k<[\frac{\dim V}{2}]$ for $G=GO(k,V)$, $GS(k,V)$.
Put
$N:=\dim H^0(\mathcal{O}_G(1))$ and
$V_N:=H^0(\mathcal{O}_G(1))^*.$
We consider $G$ as a subvariety of $\mathbb{P}(V_N)$
via the Pl\"ucker embedding. For a given integer $c$, $1\le c\le k-1$, set
$$
X:=G\cap\mathbb{P}(U),
$$
where $U\subset V_N$ is a subspace of codimension $c$. We call $X$ a {\it linear section of $G$ of codimension $c$.}

Note that there is a single family of maximal projective spaces of dimension $k$ on
$G$ with base $\tilde{G},$ where
$\tilde{G}=G(k+1,V)$ if $G=G(k,V)$, respectively, $\tilde{G}=GO(k+1,V)$ if $G=GO(k,V)$, and
$\tilde{G}=GS(k+1,V)$ if $G=GS(k,V)$ (see [PT3, Lemmas 2.2(i) and 2.5(i)]). Consider the graph of incidence
$\Sigma:=\{(V_k,V_{k+1})\in G\times\tilde{G}\ |\ V_k\subset V_{k+1}\}$ with projections
$\tilde{G}\overset{p}\leftarrow\Sigma\overset{q}\to G$ and  set
$\pi:=p|_{q^{-1}(X)}:q^{-1}(X)\rightarrow \tilde{G}.$ The condition $1\le c\le k-1$ implies that
$\pi$ is a surjective projective morphism.

\begin{proposition}\label{proposition5.1}
For a subspace $U\subset V_N$ of codimension $c$ in general
position the following statements hold.

(i) The varieties $X$ and $q^{-1}(X)$ are smooth and
\begin{equation}\label{equation30}
\pi_*\mathcal{O}_{q^{-1}(X)}=\mathcal{O}_{\tilde{G}}.
\end{equation}

(ii) $Z(U):=\{x\in \tilde{G}\ |\ \dim \pi^{-1}(x)>k-c\}$ is a proper closed subset of $\tilde{G}$ and
\begin{equation}\label{equation31}
{\rm codim}_{\tilde{G}}Z(U)\ge3,\ \ \ {\rm codim}_{q^{-1}(X)}\pi^{-1}(Z(U))\ge2.
\end{equation}

(iii) The projection $\pi:q^{-1}(X)\setminus\pi^{-1}(Z(U))\rightarrow \tilde{G}\setminus Z(U)$ is a projective $\mathbb{P}^{k-c}$-bundle.

\end{proposition}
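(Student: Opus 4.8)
The plan is to rephrase the whole statement in terms of a single bundle on $\tilde{G}$: I would realize $\Sigma$ as a projective bundle sitting linearly inside $\tilde{G}\times\mathbb{P}(V_N)$, so that $q^{-1}(X)$ becomes a ``linear section'' of that projective bundle, and then reduce everything to Bertini's theorem and to the standard dimension estimate for degeneracy loci. Concretely, over a point $V_{k+1}\in\tilde{G}$ the maximal projective space $q(p^{-1}(V_{k+1}))\subset G$ is cut out in $\mathbb{P}(V_N)$ by a linear subspace $\mathbb{P}(\mathcal{W}_{V_{k+1}})$, where for $G=G(k,V)$ one has $\mathcal{W}_{V_{k+1}}=\wedge^{k}V_{k+1}\subset\wedge^{k}V=V_N$, and analogously in the orthogonal and symplectic cases (here one invokes [PT3, Lemmas 2.2 and 2.5]). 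These subspaces are the fibres of a rank-$(k+1)$ subbundle $\mathcal{W}\subset V_N\otimes\mathcal{O}_{\tilde{G}}$ — the image of $\wedge^{k}$ of the tautological subbundle inclusion — and one gets an identification $\Sigma\cong\mathbb{P}_{\tilde{G}}(\mathcal{W})$ under which $q$ is the composite $\mathbb{P}_{\tilde{G}}(\mathcal{W})\hookrightarrow\tilde{G}\times\mathbb{P}(V_N)\to\mathbb{P}(V_N)$ (with image $G$) and $\mathcal{O}_{\mathbb{P}(\mathcal{W})}(1)\cong q^{*}\mathcal{O}_{\mathbb{P}(V_N)}(1)$. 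Hence $q^{-1}(X)=\mathbb{P}_{\tilde{G}}(\mathcal{W})\cap(\tilde{G}\times\mathbb{P}(U))$ is the zero scheme in $\mathbb{P}_{\tilde{G}}(\mathcal{W})$ of the $c$ sections of $\mathcal{O}_{\mathbb{P}(\mathcal{W})}(1)$ obtained by restricting the $c$ linear forms defining $U$. Finally, since $\mathcal{W}\subset V_N\otimes\mathcal{O}_{\tilde{G}}$ is a subbundle, the dual $\mathcal{W}^{\vee}$ is globally generated; this is the genericity resource that powers the rest of the argument.

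For (i): since $G\subset\mathbb{P}(V_N)$ is smooth, $X=G\cap\mathbb{P}(U)$ is smooth for general $U$ by Bertini's theorem; likewise $\mathbb{P}_{\tilde{G}}(\mathcal{W})$ is smooth and the linear system $q^{*}|\mathcal{O}_{\mathbb{P}(V_N)}(1)|$ is base-point-free, so cutting $c$ times by general members preserves smoothness, whence $q^{-1}(X)$ is smooth of dimension $\dim\tilde{G}+k-c$. The morphism $\pi$ is proper and surjective, and each fibre $\pi^{-1}(x)=\mathbb{P}(\mathcal{W}_x\cap U)$ is a nonempty projective space of dimension $\ge k-c\ge1$, hence connected; since $\tilde{G}$ is connected this forces $q^{-1}(X)$ to be connected, and being smooth it is irreducible. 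Then $\pi_{*}\mathcal{O}_{q^{-1}(X)}$ is a finite, reduced, torsion-free sheaf of $\mathcal{O}_{\tilde{G}}$-algebras agreeing with $\mathcal{O}_{\tilde{G}}$ at the generic point (the generic fibre is a projective space), so since $\tilde{G}$ is normal, Stein factorisation — whose Stein factor is then finite and bijective onto $\tilde{G}$ — gives $\pi_{*}\mathcal{O}_{q^{-1}(X)}=\mathcal{O}_{\tilde{G}}$, which is \refeq{equation30}.

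For (ii) and (iii): the composite $\psi_U\colon\mathcal{W}\hookrightarrow V_N\otimes\mathcal{O}_{\tilde{G}}\twoheadrightarrow(V_N/U)\otimes\mathcal{O}_{\tilde{G}}$ is a homomorphism of bundles of ranks $k+1$ and $c$, with $\dim\pi^{-1}(x)=k-\rk\psi_{U,x}$, so $Z(U)=\{x\in\tilde{G}\mid\rk\psi_{U,x}\le c-1\}$ and more generally $Z_b(U):=\{x\mid\rk\psi_{U,x}\le c-1-b\}$, $b\ge0$. Because $\mathcal{W}^{\vee}$ is globally generated, for general $U$ the map $\psi_U$ is a general homomorphism $\mathcal{W}\to\mathcal{O}_{\tilde{G}}^{\,c}$ (evaluation at each point of $\tilde{G}$ being surjective onto $\Hom(\mathcal{W}_x,\mathbb{C}^{c})$), so the standard degeneracy-locus estimate — equivalently Kleiman's transversality theorem applied via $V_{k+1}\mapsto\mathcal{W}_{V_{k+1}}$ and the $\mathrm{GL}(V_N)$-action on the Grassmannian of $(k+1)$-planes in $V_N$ — shows that for general $U$ each $Z_b(U)$ is empty or of pure codimension $(b+1)(k+2-c+b)$ in $\tilde{G}$. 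In particular $Z(U)=Z_0(U)$ is a proper closed subset (the generic rank of $\psi_U$ equals $c$) with $\codim_{\tilde{G}}Z(U)=k+2-c\ge3$ since $c\le k-1$, giving the first inequality in \refeq{equation31}. Over $Z_b(U)\setminus Z_{b+1}(U)$ the fibres of $\pi$ have constant dimension $k-c+1+b$, so $\pi^{-1}(Z_b(U)\setminus Z_{b+1}(U))$ has dimension at most $\dim\tilde{G}-(b+1)(k+2-c+b)+(k-c+1+b)$; subtracting this from $\dim q^{-1}(X)=\dim\tilde{G}+k-c$ gives codimension $(b+1)(k+1-c+b)\ge k+1-c\ge2$, and the minimum over $b\ge0$ yields the second inequality in \refeq{equation31}. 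Finally, on $\tilde{G}\setminus Z(U)$ the map $\psi_U$ has constant rank $c$, so $\ker\psi_U$ is a subbundle of rank $k+1-c$ and $q^{-1}(X)\setminus\pi^{-1}(Z(U))=\mathbb{P}_{\tilde{G}\setminus Z(U)}(\ker\psi_U)$ is a $\mathbb{P}^{k-c}$-bundle over $\tilde{G}\setminus Z(U)$, which is (iii).

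The hard part will be the transversality input in the preceding paragraph: one must ensure that for $U$ in general position the homomorphism $\psi_U$ is generic enough for the estimate $\codim_{\tilde{G}}Z_b(U)\ge(b+1)(k+2-c+b)$ to hold for all $b$ simultaneously. The enabling fact is the global generation of $\mathcal{W}^{\vee}$, i.e. that the family of maximal projective spaces spans $\mathbb{P}(V_N)$ transversally in the relevant sense; granting this, what remains is a routine incidence–dimension count together with generic smoothness. A secondary point requiring care is to check that the identification $\Sigma\cong\mathbb{P}_{\tilde{G}}(\mathcal{W})$ and the global generation of $\mathcal{W}^{\vee}$ go through uniformly in the orthogonal and symplectic cases, which is precisely where [PT3, Lemmas 2.2 and 2.5] are used.
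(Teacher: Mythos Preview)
Your proof is correct and arrives at exactly the same numerical conclusions as the paper, but the packaging is different in two places, and it is worth noting the trade-offs.

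For \refeq{equation30} the paper does not use Stein factorisation at all: it writes down the Koszul resolution of $\mathcal{O}_X$ on $G$, pulls it back along $q$ to $\Sigma$, and observes that for $1\le i\le c\le k-1$ one has $H^j(\mathbb{P}^k,\mathcal{O}_{\mathbb{P}^k}(-i))=0$ for all $j$, so that $R^jp_*(q^*\mathcal{O}_G(-i))=0$; applying $R^{\cdot}p_*$ to the pulled-back Koszul complex then yields $\pi_*\mathcal{O}_{q^{-1}(X)}=p_*\mathcal{O}_\Sigma=\mathcal{O}_{\tilde G}$ directly. Your argument via connectedness of fibres and Stein factorisation is equally valid and is perhaps more robust (it does not use the bound $c\le k-1$ for this step, only $c\le k$), but the Koszul approach has the advantage of giving the higher vanishings $R^jp_*$ for free, should one need them later.

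For (ii) the paper carries out the dimension count by hand on the incidence varieties
\[
\Gamma_i=\{(W,V_{k+1})\in G(N-c,V_N)\times\tilde G\ \mid\ \dim(W\cap\mathcal{W}_{V_{k+1}})\ge k-c+i+2\},
\]
computing $\dim q_i^{-1}(x)$ via an auxiliary variety $\tilde Y$ and deducing $\codim_{\tilde G}Z_i(U)=(i+1)(k+i+2-c)$ for general $U$. This is exactly the degeneracy-locus formula you obtain from Kleiman transversality applied to $\psi_U:\mathcal{W}\to(V_N/U)\otimes\mathcal{O}_{\tilde G}$, with your $b$ equal to the paper's $i$; the paper simply does not invoke the degeneracy-locus machinery and instead unwinds the count explicitly. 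Your formulation is cleaner and makes the role of the global generation of $\mathcal{W}^\vee$ transparent, while the paper's hands-on count avoids any appeal to Kleiman and works uniformly without having to verify that the map $\tilde G\to G(k+1,V_N)$ behaves well in the orthogonal and symplectic cases.
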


\begin{proof}
We give the proof for the case $G=GO(k,V)$. The other cases are very similar and we leave them to the reader.

(i) Since the projective subspace
$\mathbb{P}(U)$ is in general position in $\mathbb{P}(V_N)$, we have
${\rm codim}_G X=c$ and hence there is a Koszul resolution of the
$\mathcal{O}_G$-sheaf $\mathcal{O}_X$
\begin{equation}\label{equation32}
0\rightarrow \mathcal{O}_G(-c)\rightarrow ...\rightarrow\left(c\atop i\right)
\mathcal{O}_G(-i)\rightarrow ...\rightarrow c\mathcal{O}_G(-i)\rightarrow
\mathcal{O}_G\rightarrow \mathcal{O}_X\rightarrow 0.
\end{equation}
The pullback of (\ref{equation32}) under the projection $q$ is a $\mathcal{O}_\Sigma$-resolution of the
sheaf $\mathcal{O}_{q^{-1}(X)}=\pi^*\mathcal{O}_{\tilde{G}}$ of the form
\begin{equation}\label{equation33}
0\rightarrow \mathcal{L}_c\rightarrow ...\rightarrow\mathcal{L}_1\rightarrow
\mathcal{O}_{\Sigma}\rightarrow \pi^*\mathcal{O}_{\tilde{G}} \rightarrow 0
\end{equation}
where $\mathcal{L}_i:=q^*(\left(c\atop i\right)\mathcal{O}_G(-i))$, $i=1,...,c$.

For any $x\in \tilde{G}$ we have $p^{-1}(x)\simeq \mathbb{P}^k$, so the condition $c\le k-1$ implies
$H^j(p^{-1}(x),\mathcal{L}_i|_{p^{-1}(x)})\simeq
H^j(\mathbb{P}^k,\mathcal{O}_{\mathbb{P}^k}(-i))=0$ for $j\ge0$, $i=1,...,c.$
Hence the Base-change Theorem \cite[Ch. III, Theorem 12.11]{H1} for the flat projective morphism
$p$ shows that $R^jp_*\mathcal{L}_i=0$.
In addition, by the same reason $R^jp_*\mathcal{O}_\Sigma=0,$ $j>0$,
and clearly $p_*\mathcal{O}_{\Sigma}=\mathcal{O}_{\tilde{G}}.$
Therefore, applying the functor $R^\cdot p_*$ to (\ref{equation33}) we obtain
(\ref{equation30}).

(ii) We now prove (\ref{equation31}). Fix an arbitrary point $V_{k+1}\in \tilde{G}.$ Since $p^{-1}(x)=\mathbb{P}(V^*_{k+1})$
(see [PT3, Lemma 2.2(i)]), there is an induced monomorphism
\begin{equation}\label{equation34}
0\rightarrow V^*_{k+1}\rightarrow V_N.
\end{equation}
Consider the varieties
$$
\Gamma_i:=\{(W,V_{k+1})\in G(N-c, V_N)\times \tilde{G}\ |\ \dim(W\cap V^*_{k+1})\ge k-c+i+2\},\ \ 0\le i\le c-1,
$$
together with the natural projections
$$
G(N-c, V_N)\overset{p_i}\leftarrow\Gamma_i\overset{q_i}\to \tilde{G}.
$$
For an arbitrary $U\in G(N-c, V_N)$ denote
$$
Z_i(U):=q_i(p_i^{-1}(U)),
\ \ 0\le i\le c-1.
$$

By construction, $Z_0(U)=Z(U)$, $Z_i(U)$ are closed subvarieties of $\tilde{G}$,
and we have a filtration
\begin{equation}\label{equation38}
\emptyset=:Z_c(U)\subset Z_{c-1}(U)\subset ...\subset Z_{0}(U)=Z(U)
\end{equation}
such that $Z_i(U)':=Z_i(U)\setminus Z_{i+1}(U)$) are locally closed subvarieties of $\tilde{G}$.
Consequently, $B_i(U)':=\pi^{-1}(Z_i(U)')$ are locally closed subvarieties of $q^{-1}(X)$.
Moreover, $\pi|_{B_i(U)'}:B_i(U)'\rightarrow Z_i(U)'$ is a $\mathbb{P}^{k+1-c+i}$-bundle, so that
$\dim B_i(U)'=\dim Z_i(U)'+k+1-c+i$.
Equivalently,
\begin{equation}\label{equation39}
{\rm codim}_{q^{-1}(X)}B_i(U)'={\rm codim}_{\tilde{G}}Z_i(U)'-(i+1).
\end{equation}
Note also that $Z(U)=\cup_{i=0}^{c-1}Z_i(U)'$, hence
\begin{equation}\label{equation40}
\pi^{-1}(Z(U))=\pi^{-1}\left( \cup_{i=0}^{c-1}Z_i(U)'\right)=\cup_{i=0}^{c-1}B_i(U)'.
\end{equation}

We now calculate the dimensions of $Z_i(U)$ under the assumption that $U$ is in general positon.
For this, let $Y:=q_i^{-1}(x)$
be the fibre of the projection $q_i$ over a point $x=V_{k+1}\in Z_i(U)$. Consider the variety
$\tilde{Y}=\{(W,V_{k-c+i+2})\in G(N-c, V_N)\times G(k-c+i+2, V_{k+1}^*)\ |\
W\supset V_{k-c+i+2}\subset V_{k+1}^*\}$.
The natural projection
$\tilde{Y}\rightarrow G(k-c+i+2,V_{k+1}^*)$
is a fibration with the grassmannian $G(N-k-i-2,\mathbb{C}^{N-k-i-2+c})$
as a fibre. On the other hand, one has a birational surjective morphism
$\tilde{Y}\to Y,\ (W,V_{k-c+i+2})\mapsto W$.
Therefore, in view of (\ref{equation34}),
$\dim Y=\dim \tilde{Y}=\dim G(k-c+i+2, V_{k+1}^*)+
\dim G(N-k-i-2,\mathbb{C}^{N-k-i-2+c})=cN-c^2+(i+1)(c-k-i-2)$.
As $q_i$ is surjective, this yields
$$
\dim\Gamma_i=\dim \tilde{G}+\dim Y=\dim \tilde{G}+cN-c^2+(i+1)(c-k-i-2).
$$
Since $p_i$ is also surjective, for a point $U\in G(N-c,V_N)$ in general position we have
$\dim Z_i(U)=\dim\Gamma_i-\dim G(N-c, V_N)=\dim \tilde{G}-(i+1)(k+i+2-c),$ i.e.
\begin{equation}\label{equation42}
{\rm codim}_{\tilde{G}}Z_i(U)=(i+1)(k+i+2-c),\ \ 0\le i\le c-1.
\end{equation}
This together with (\ref{equation39}) implies
${\rm codim}_{q^{-1}(X)}B_i(U)'=(i+1)(k+i+1-c),$ $0\le i\le c-1.$
Therefore, in view of (\ref{equation40}) and the assumption $c\le k-1$, we obtain
\begin{equation}\label{equation43}
{\rm codim}_{q^{-1}(X)}\pi^{-1}(Z(U))=
\underset{0\le i\le c-1}{\rm min}{\rm codim}_{q^{-1}(X)}B_i(U)'=k+1-c\ge2.
\end{equation}
The inequality $\codim_{\tilde{G}}Z(U)\ge3$ follows now from (\ref{equation39}), and the proposition
is proved.
\end{proof}

\begin{corollary}\label{corollary5.2}
Under the assumptions of Proposition \ref{proposition5.1}, let $\mathcal{E}$
be a vector bundle on $q^{-1}(X)$ trivial along the fibres of the morphism
$\pi:q^{-1}(X)\to \tilde{G}$. Then there is an isomorphism
$ev:\pi^*\pi_*\mathcal{E} \overset{\simeq}\to \mathcal{E}$.
\end{corollary}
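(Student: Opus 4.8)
The plan is to reduce the corollary to the standard statement of cohomology and base change, the only genuine subtlety being that $\pi$ is not flat over the locus $Z(U)$ where its fibre dimension jumps. I would begin by recording the shape of the fibres of $\pi$. Fix $x=V_{k+1}\in\tilde{G}$. By the description of $p^{-1}(x)$ used in the proof of Proposition~\ref{proposition5.1}(ii), the fibre $p^{-1}(x)\cong\mathbb{P}(V_{k+1}^*)$ is a linear subspace $\mathbb{P}^{k}$ of $\mathbb{P}(V_N)$ lying inside $G$, this being precisely the content of the monomorphism (\ref{equation34}). Since $q^{-1}(X)=\Sigma\cap q^{-1}(\mathbb{P}(U))$, the fibre of $\pi$ over $x$ is the scheme-theoretic intersection
$$\pi^{-1}(x)=\mathbb{P}(V_{k+1}^*)\cap\mathbb{P}(U)=\mathbb{P}(V_{k+1}^*\cap U),$$
again a projective linear space, now of dimension $d(x)\ge k-c\ge1$ (the hypothesis $1\le c\le k-1$ enters here). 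Thus every fibre of $\pi$ is a reduced, connected projective space with $H^{\ge1}(\pi^{-1}(x),\mathcal{O})=0$; and since $\mathcal{E}|_{\pi^{-1}(x)}$ is trivial of rank $r:=\rk\mathcal{E}$ by hypothesis, the functions $x\mapsto h^0(\mathcal{E}|_{\pi^{-1}(x)})=r$ and $x\mapsto h^{\ge1}(\mathcal{E}|_{\pi^{-1}(x)})=0$ are constant. Granting that $\pi_*\mathcal{E}$ is locally free of rank $r$ and that the base-change maps $(\pi_*\mathcal{E})\otimes k(x)\to H^0(\pi^{-1}(x),\mathcal{E}|_{\pi^{-1}(x)})$ are isomorphisms, the corollary follows: $ev$ then restricts over each fibre $\pi^{-1}(x)\cong\mathbb{P}^{d(x)}$ to the evaluation map of a trivial bundle, which is an isomorphism, and a morphism of vector bundles that is an isomorphism at every point of every fibre of the surjection $\pi$ is an isomorphism.

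Over the open set $U':=\tilde{G}\setminus Z(U)$ this is routine. By Proposition~\ref{proposition5.1}(iii), $\pi$ restricts there to a genuine, hence flat, $\mathbb{P}^{k-c}$-bundle $\pi':W':=\pi^{-1}(U')\to U'$, and Grothendieck's cohomology-and-base-change theorem applied to $\pi'$ and the fibrewise-trivial bundle $\mathcal{E}|_{W'}$ (using $k-c\ge1$) shows that $\pi'_*(\mathcal{E}|_{W'})$ is locally free of rank $r$, that its formation commutes with base change, and that $ev|_{W'}$ is an isomorphism. Moreover, since $\mathcal{E}$ is locally free on the smooth variety $q^{-1}(X)$ and $\codim_{q^{-1}(X)}\pi^{-1}(Z(U))\ge2$ by (\ref{equation31}), the restriction map $\mathcal{E}\to\iota_*(\mathcal{E}|_{W'})$ is an isomorphism, where $\iota:W'\hookrightarrow q^{-1}(X)$; applying $\pi_*$ and using $\pi\circ\iota=j\circ\pi'$ with $j:U'\hookrightarrow\tilde{G}$ gives $\pi_*\mathcal{E}\cong j_*\pi'_*(\mathcal{E}|_{W'})$.

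The hard part, as I expect it, is to upgrade this to local freeness of $\pi_*\mathcal{E}$ of rank $r$, with base change, across $Z(U)$, where $\pi$ fails to be flat and so cohomology and base change cannot be invoked directly. Here I would argue formally along a fibre $\pi^{-1}(x)$, $x\in Z(U)$: by the theorem on formal functions $(\pi_*\mathcal{E})^{\wedge}_x\cong\varprojlim_n H^0(\pi^{-1}(x)^{(n)},\mathcal{E})$, where $\pi^{-1}(x)^{(n)}$ is the $n$-th infinitesimal neighbourhood of $\pi^{-1}(x)$ in $q^{-1}(X)$, and the obstruction to surjectivity of the transition maps lies in groups $H^1(\pi^{-1}(x),\mathcal{E}|_{\pi^{-1}(x)}\otimes\mathcal{J}_n)$ with $\mathcal{J}_n$ a quotient of a trivial sheaf on the projective space $\pi^{-1}(x)$; these are to be controlled using the linearity of the fibres together with the codimension estimates $\codim_{\tilde{G}}Z(U)\ge3$ and $\codim_{q^{-1}(X)}\pi^{-1}(Z(U))\ge2$ of Proposition~\ref{proposition5.1}(ii) and the identity $\pi_*\mathcal{O}_{q^{-1}(X)}=\mathcal{O}_{\tilde{G}}$ of Proposition~\ref{proposition5.1}(i). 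Lifting a basis of $H^0(\pi^{-1}(x),\mathcal{E}|_{\pi^{-1}(x)})$ through this inverse system produces $r$ sections of $\mathcal{E}$ over $\pi^{-1}$ of a neighbourhood of $x$ that form a frame of $\mathcal{E}$ along $\pi^{-1}(x)$, hence, by properness of $\pi$ and Nakayama's lemma, on a $\pi$-saturated neighbourhood of $\pi^{-1}(x)$; this yields a local trivialization of $\mathcal{E}$, and therefore of $\pi_*\mathcal{E}$, near $x$, giving both the local freeness and the required base-change statement, after which the first paragraph concludes the proof.
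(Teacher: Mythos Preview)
Your first two paragraphs are correct and in fact already contain the seed of the paper's argument: from $\pi_*\mathcal{E}\cong j_*\pi'_*(\mathcal{E}|_{W'})$ with $\pi'_*(\mathcal{E}|_{W'})$ locally free on $U'$ and $\codim Z(U)\ge 2$ on the smooth variety $\tilde G$, one gets immediately that $\pi_*\mathcal{E}$ is reflexive.

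The third paragraph, however, has a genuine gap. You propose to lift a frame of $\mathcal{E}|_{\pi^{-1}(x)}$ through the tower of infinitesimal neighbourhoods via the theorem on formal functions, with obstructions in $H^1(\pi^{-1}(x),\mathcal{E}\otimes I^n/I^{n+1})$. But for $x\in Z(U)$ the fibre $\mathbb{P}^{d(x)}\hookrightarrow q^{-1}(X)$ is \emph{not} a regular embedding (its ideal is generated by $\dim\tilde G$ pullback functions while its codimension is strictly smaller), so $I^n/I^{n+1}\ne\Sym^n(I/I^2)$ and these graded pieces are only known to be quotients of trivial sheaves. Quotients of trivial sheaves on $\mathbb{P}^d$ can certainly have nonzero $H^1$ (e.g.\ $\mathcal{O}_C$ for a plane cubic $C$), so the asserted vanishing is not automatic. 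Your appeal to the codimension estimates on $Z(U)$ and to $\pi_*\mathcal{O}_{q^{-1}(X)}=\mathcal{O}_{\tilde G}$ does not visibly control cohomology on a \emph{single} fibre, which is what the formal-functions argument needs. As written, this step is a hope rather than a proof.

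The paper bypasses this entirely by a reflexivity argument (its Proposition~\ref{proposition7.2}, invoking \cite{H2}). Having $\pi_*\mathcal{E}$ reflexive, one sets $\tilde{\mathcal{E}}:=\pi^*\pi_*\mathcal{E}/\mathrm{Tors}$ and shows, by pulling back a local two-term locally free resolution of the reflexive sheaf $\pi_*\mathcal{E}$, that $\tilde{\mathcal{E}}$ is itself reflexive on the smooth variety $q^{-1}(X)$. Your paragraph~2 already gives $\tilde{\mathcal{E}}|_{W'}\cong\mathcal{E}|_{W'}$; since both sheaves are reflexive and $\codim_{q^{-1}(X)}\pi^{-1}(Z(U))\ge 2$, this isomorphism extends to all of $q^{-1}(X)$, so $\pi^*\pi_*\mathcal{E}/\mathrm{Tors}\cong\mathcal{E}$. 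Local freeness of $\pi_*\mathcal{E}$ (hence $\mathrm{Tors}=0$) then follows a posteriori from constancy of the fibre dimension of $\pi^*\pi_*\mathcal{E}$. This route uses the codimension-$2$ estimates exactly once, at the level of sheaves rather than fibrewise cohomology, and needs no analysis of infinitesimal neighbourhoods.
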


\begin{proof}
Apply Proposition \ref{proposition7.2} from the appendix to the
morphism $\pi:q^{-1}(X)\to\tilde{G}$,
the subvariety $Z(U)$ in $\tilde{G}$,
and the vector bundle $\mathcal{E}$ on $q^{-1}(X)$.
\end{proof}

\begin{lemma}\label{lemma5.3}
Let $X=G\cap\mathbb{P}(U)$ be a linear section of $G$ of codimension $c$ for
$1\le c\le(k-1)/2,$ and let $\mathbb{P}^1$ be a projective line on $\tilde{G}$.
Then there exists a rational curve
$C\subset q^{-1}(X)$ such that $\pi|_C$ is an isomorphism of $C$ with $\mathbb{P}^1$, and
$q|_C$ is either an isomorphism or a constant map.
\end{lemma}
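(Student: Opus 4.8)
The plan is to reduce the assertion to a linear-algebra count for sections of a $\mathbb{P}^k$-bundle over $\mathbb{P}^1$. Write $\mathcal{S}_G$ and $\mathcal{S}_{\tilde G}$ for the tautological subbundles of ranks $k$ and $k+1$ on $G$ and on $\tilde G$, so that $\mathcal{O}_G(1)=\det\mathcal{S}_G^*$ and $\mathcal{O}_{\tilde G}(1)=\det\mathcal{S}_{\tilde G}^*$. On $\Sigma$ one has $q^*\mathcal{S}_G\subset p^*\mathcal{S}_{\tilde G}$ with invertible quotient $\mathcal{L}:=p^*\mathcal{S}_{\tilde G}/q^*\mathcal{S}_G$, and $\Sigma\cong\mathbb{P}(\mathcal{S}_{\tilde G}^*)$ over $\tilde G$ (the bundle of lines in the fibres), with relative hyperplane bundle $\mathcal{O}_\Sigma(1)\cong\mathcal{L}$. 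Taking determinants in $0\to q^*\mathcal{S}_G\to p^*\mathcal{S}_{\tilde G}\to\mathcal{L}\to 0$ gives
$$q^*\mathcal{O}_G(1)\ \cong\ p^*\mathcal{O}_{\tilde G}(1)\otimes\mathcal{O}_\Sigma(1).$$
Writing the given line $\mathbb{P}^1\subset\tilde G$ (after [PT3, Lemmas 2.2 and 2.5]) as $\{A\subset W\subset B\}$ with $\dim A=k$, $\dim B=k+2$, the tautological sequence $0\to A\otimes\mathcal{O}_{\mathbb{P}^1}\to\mathcal{S}_{\tilde G}|_{\mathbb{P}^1}\to\mathcal{O}_{\mathbb{P}^1}(-1)\to 0$ splits because $H^1(\mathbb{P}^1,\mathcal{O}_{\mathbb{P}^1}(1))=0$; hence $\mathcal{S}_{\tilde G}^*|_{\mathbb{P}^1}\cong\mathcal{O}_{\mathbb{P}^1}^{\oplus k}\oplus\mathcal{O}_{\mathbb{P}^1}(1)$, the restriction $\Sigma_0:=p^{-1}(\mathbb{P}^1)$, with projection $\pi_0:\Sigma_0\to\mathbb{P}^1$, is the $\mathbb{P}^k$-bundle $\mathbb{P}(\mathcal{O}_{\mathbb{P}^1}^{\oplus k}\oplus\mathcal{O}_{\mathbb{P}^1}(1))$, and $q^*\mathcal{O}_G(1)|_{\Sigma_0}\cong\mathcal{O}_{\Sigma_0}(1)\otimes\pi_0^*\mathcal{O}_{\mathbb{P}^1}(1)$.

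Next I would construct $C$ as a section of $\pi_0$ lying inside $\pi^{-1}(\mathbb{P}^1)=q^{-1}(X)\cap\Sigma_0$. A section of $\pi_0$ along which $\mathcal{O}_{\Sigma_0}(1)$ restricts to $\mathcal{O}_{\mathbb{P}^1}$ is exactly a nowhere-vanishing bundle map $\varphi=(v,s)\colon\mathcal{O}_{\mathbb{P}^1}\hookrightarrow\mathcal{O}_{\mathbb{P}^1}^{\oplus k}\oplus\mathcal{O}_{\mathbb{P}^1}(1)$, i.e. a pair with $v\in\mathbb{C}^{k}\smallsetminus\{0\}$ and $s\in H^0(\mathbb{P}^1,\mathcal{O}_{\mathbb{P}^1}(1))$; on the resulting curve $C\cong\mathbb{P}^1$ one has $q^*\mathcal{O}_G(1)|_C\cong\mathcal{O}_{\mathbb{P}^1}(1)$, so $q|_C$ embeds $C$ as a line of $G$. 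Since $X=G\cap\mathbb{P}(U)$ is cut out in $G$ by $c$ hyperplane sections, $q^{-1}(X)$ is the common zero scheme in $\Sigma$ of $c$ sections of $q^*\mathcal{O}_G(1)$, so $\pi^{-1}(\mathbb{P}^1)$ is the common zero scheme in $\Sigma_0$ of $c$ elements $\sigma_1,\dots,\sigma_c$ of $\Hom(\mathcal{O}_{\mathbb{P}^1}^{\oplus k}\oplus\mathcal{O}_{\mathbb{P}^1}(1),\mathcal{O}_{\mathbb{P}^1}(1))$; and the section $C$ above lies in $\pi^{-1}(\mathbb{P}^1)$ precisely when $\sigma_i\circ\varphi=0$ in $H^0(\mathbb{P}^1,\mathcal{O}_{\mathbb{P}^1}(1))$ for $1\le i\le c$, that is, when $(v,s)\in\mathbb{C}^{k+2}$ satisfies $2c$ linear equations.

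The hypothesis $c\le(k-1)/2$, i.e. $2c\le k-1$, finishes the proof: the solution space of these $2c$ linear equations has dimension $\ge k+2-2c\ge 3$, so it is not contained in the $2$-dimensional locus $\{v=0\}$, and any solution with $v\ne 0$ yields the required rational curve $C\subset q^{-1}(X)$ with $\pi|_C$ an isomorphism onto $\mathbb{P}^1$ and $q|_C$ an isomorphism onto a line contained in $X$. (This always produces the ``isomorphism'' case; the ``constant'' case of the statement corresponds instead to the unique subbundle $\mathcal{O}_{\mathbb{P}^1}(1)\hookrightarrow\mathcal{S}_{\tilde G}^*|_{\mathbb{P}^1}$, which is the constant section $W\mapsto(A,W)$ and lies in $q^{-1}(X)$ exactly when $A\in X$.) The cases $G=G(k,V)$ and $G=GS(k,V)$ run in the same way, only the description of the lines of $\tilde G$ changing, again by [PT3]. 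I expect the step needing most care to be the passage ``$C\subset\pi^{-1}(\mathbb{P}^1)\Leftrightarrow 2c$ linear equations'': it relies on the two displayed identities, in particular on the splitting $\mathcal{S}_{\tilde G}^*|_{\mathbb{P}^1}\cong\mathcal{O}_{\mathbb{P}^1}^{\oplus k}\oplus\mathcal{O}_{\mathbb{P}^1}(1)$ holding for \emph{every} line on $\tilde G$ (hence the appeal to [PT3]); once that is in place the conclusion is routine linear algebra.
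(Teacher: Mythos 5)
Your argument is correct, and it reaches the conclusion by a genuinely different route than the paper. The paper's proof is synthetic: writing $\mathbb{P}^1=\{V_{k+1}\ |\ V_k\subset V_{k+1}\subset V_{k+2}\}$, it distinguishes two cases. If $V_k\in X$ it takes the curve $\{(V_k,V_{k+1})\}\subset\Sigma$, on which $q$ is constant; if $V_k\notin X$ it observes that $q(p^{-1}(\mathbb{P}^1))$ is a cone with vertex $V_k$ over the Segre variety $\mathbb{P}(V_k^*)\times\mathbb{P}(V_{k+2}/V_k)$ and finds a ruling line $\{y\}\times\mathbb{P}(V_{k+2}/V_k)$ lying in $\mathbb{P}(U)$ by intersecting two $(k-c-1)$-planes inside a fibre $\mathbb{P}^{k-1}$, which is nonempty exactly because $2c\le k-1$; the preimage of that ruling line is the desired section. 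You instead work on the bundle $p^{-1}(\mathbb{P}^1)\cong\mathbb{P}(\mathcal{O}_{\mathbb{P}^1}^{\oplus k}\oplus\mathcal{O}_{\mathbb{P}^1}(1))$, use the identity $q^*\mathcal{O}_G(1)\cong p^*\mathcal{O}_{\tilde G}(1)\otimes\mathcal{O}_\Sigma(1)$, and count sections $(v,s)$ killed by the $c$ pulled-back linear forms: $2c$ linear conditions on $\mathbb{C}^{k+2}$, so a solution space of dimension at least $3$, not contained in the $2$-dimensional locus $\{v=0\}$ --- the same inequality $2c\le k-1$ in a different guise. Both proofs rely on the [PT3] description of lines on $\tilde G$ by flags $A\subset B$ with $\dim A=k$, $\dim B=k+2$ (your splitting of $\mathcal{S}_{\tilde G}^*$ along every line follows from it, and the extension class dies since $H^1(\mathbb{P}^1,\mathcal{O}_{\mathbb{P}^1}(1))=0$, as you say). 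What your approach buys: a uniform treatment of $G(k,V)$, $GO(k,V)$, $GS(k,V)$ with no case distinction, always producing the stronger alternative that $q|_C$ is an isomorphism onto a line contained in $X$ (which is all that is needed in Proposition \ref{proposition5.4}); your parenthetical remark correctly identifies the paper's constant-curve case with the distinguished subbundle $\mathcal{O}_{\mathbb{P}^1}(1)\subset\mathcal{S}_{\tilde G}^*|_{\mathbb{P}^1}$. What the paper's approach buys: it is more elementary (no bundle formalism, just linear subspaces of a Segre cone) and makes the geometry of $q(p^{-1}(\mathbb{P}^1))$ explicit. The only step worth writing out a bit more carefully in your version is the standard fact that a non-constant morphism $C\to\mathbb{P}(V_N)$ with $\deg q^*\mathcal{O}_G(1)|_C=1$ has a line as image and is an isomorphism onto it; with that, the proof is complete.
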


\begin{proof}
We only consider the case $G=GO(k,V)$. It is clear that
$$
\mathbb{P}^1=\{V_{k+1}\in \tilde{G}\ |\ V_k\subset V_{k+1}\subset V_{k+2}\}
$$
for a unique isotropic flag $V_k\subset V_{k+2}$ in $V$. If $V_k\in X,$ we set
$C:=\{(V_k,V_{k+1})\in\Sigma\ |\ V_k\subset V_{k+1}\subset V_{k+2}\}$.
Then $\pi|_C:C\to\mathbb{P}^1$ is an isomorphism and $q(C)$ equals the point $\{V_k\}\in G$.

Assume that $V_k\notin X.$ It is straightforward to check that the intersection
$q(p^{-1}(\mathbb{P}^1))\cap \mathbb{P}^{N-2}$ for a hyperplane
$\mathbb{P}^{N-2}\subset \mathbb{P}(V_N)$ such that $V_k\notin \mathbb{P}^{N-2}$,
is isomorphic to the direct product
$\mathbb{P}(V_k^*)\times \mathbb{P}(V_{k+2}/V_k)$
imbedded by Segre in $\mathbb{P}^{N-2}$. Let
$\mathbb{P}^{k-1}_a$, $\mathbb{P}^{k-1}_b$
be the fibres in
$\mathbb{P}(V_k^*)\times \mathbb{P}(V_{k+2}/V_k)$
over two points
$a,b\in  \mathbb{P}(V_{k+2}/V_k).$
The projection $pr_1:\mathbb{P}(V_k^*)\times \mathbb{P}(V_{k+2}/V_k)\to\mathbb{P}(V_k^*)$ induces an isomorphism
$f:\mathbb{P}^{k-1}_a\overset{\sim}\to\mathbb{P}^{k-1}_b.$

Set
$\mathbb{P}^{k-c-1}_a:=\mathbb{P}^{k-1}_a\cap \mathbb{P}(U)$,
$\mathbb{P}^{k-c-1}_b:=\mathbb{P}^{k-1}_b\cap
\mathbb{P}(U)$. Since $1\le c\le(k-1)/2$, the intersection
$\mathbb{P}^{k-c-1}_b\cap f(\mathbb{P}^{k-c-1}_a)\subset\mathbb{P}^{k-1}_b$
is nonempty. Consider a point $x$ in this latter intersection. By construction, the fibre
$\mathbb{P}^1_x:=pr_1^{-1}(x)$ lies in
$q(p^{-1}(\mathbb{P}^1))\cap X$.
Finally, the preimage of $\mathbb{P}^1_x$ in
$p^{-1}(\mathbb{P}^1)$ is a rational curve $C$ as desired.
\end{proof}

\begin{proposition}\label{proposition5.4}
Let $X$ be a linear section of $G$ of codimension $c$ for $1\le c\le(k-1)/2.$ Then a linearly trivial vector bundle $E$ on $X$ is trivial.
\end{proposition}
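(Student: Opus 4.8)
The plan is to reduce the triviality of a linearly trivial bundle $E$ on the linear section $X = G\cap\mathbb{P}(U)$ to the triviality of a bundle on the auxiliary variety $q^{-1}(X)$, and from there descend to $\tilde G$, where the bundle will turn out to be globally trivial by an induction on $\dim \tilde G$ (i.e. essentially by the finite-dimensional input that feeds the BVTS machinery). First I would pull $E$ back to $\mathcal{E} := (q|_{q^{-1}(X)})^*E$ on $q^{-1}(X)$. The key point is to check that $\mathcal{E}$ is trivial along the fibres of $\pi:q^{-1}(X)\to\tilde G$. A generic such fibre is a projective space $\mathbb{P}^{k-c}$, and a line $\mathbb{P}^1$ in such a fibre maps under $q$ either isomorphically onto a line on $G$ lying in $X$, or to a point; this dichotomy is exactly what Lemma \ref{lemma5.3} provides (combined, over the open locus $\tilde G\setminus Z(U)$, with Proposition \ref{proposition5.1}(iii) which identifies the fibres precisely). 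Since $E$ is linearly trivial on $X$, its restriction to any line on $X$ is $\mathcal{O}_{\mathbb{P}^1}^{\oplus\operatorname{rk}E}$, and a pullback along a constant map is obviously trivial on the line; hence $\mathcal{E}|_{\mathbb{P}^1}$ is trivial for every line in every generic fibre, so by the standard splitting/uniformity argument on $\mathbb{P}^{k-c}$ (a bundle on projective space all of whose restrictions to lines are trivial is itself trivial — here one may need the codimension estimate \refeq{equation43} to control the bad locus) the restriction of $\mathcal{E}$ to each fibre of $\pi$ is trivial.

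Having established fibrewise triviality, I would invoke Corollary \ref{corollary5.2}: it gives the evaluation isomorphism $\ev:\pi^*\pi_*\mathcal{E}\xrightarrow{\ \sim\ }\mathcal{E}$, so that $\mathcal{E}\cong\pi^*\mathcal{F}$ for the vector bundle $\mathcal{F}:=\pi_*\mathcal{E}$ on $\tilde G$, with $\operatorname{rk}\mathcal{F}=\operatorname{rk}E$. The next step is to show that $\mathcal{F}$ is itself trivial. For this I would observe that lines on $\tilde G = GO(k+1,V)$ (resp.\ $G(k+1,V)$, $GS(k+1,V)$) lift, via Lemma \ref{lemma5.3}, to rational curves in $q^{-1}(X)$ on which $\mathcal{E}=\pi^*\mathcal{F}$ restricts to the pullback of $\mathcal{F}|_{\mathbb{P}^1}$; tracing through the two cases of the lemma one concludes $\mathcal{F}|_{\mathbb{P}^1}$ is trivial for every line on $\tilde G$, i.e.\ $\mathcal{F}$ is a linearly trivial (hence uniform) bundle on the finite-dimensional homogeneous variety $\tilde G$. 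On such a variety a uniform bundle with zero splitting type on every line is trivial — this is the classical finite-dimensional fact underlying the whole circle of ideas (Van de Ven's criterion / the argument reproduced from \cite{T} via Proposition \ref{proposition3.2} in the rank-graded form), which one applies here in its plain finite-dimensional version. Thus $\mathcal{F}\cong\mathcal{O}_{\tilde G}^{\oplus\operatorname{rk}E}$, whence $\mathcal{E}\cong\mathcal{O}_{q^{-1}(X)}^{\oplus\operatorname{rk}E}$.

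Finally, to get back from $q^{-1}(X)$ to $X$ I would use that $q:q^{-1}(X)\to X$ is surjective with connected (projective-space) fibres: generically $q$ is a $\mathbb{P}^{\dim\tilde G - \dim X + \text{(appropriate shift)}}$-bundle-like map, and in any case $q_*\mathcal{O}_{q^{-1}(X)}=\mathcal{O}_X$ with higher direct images vanishing (an analogue of \refeq{equation30}), so $q_*\mathcal{E} = q_*\mathcal{O}_{q^{-1}(X)}^{\oplus\operatorname{rk}E} = \mathcal{O}_X^{\oplus\operatorname{rk}E}$ and the natural map $q^*q_*\mathcal{E}\to\mathcal{E}$ is an isomorphism because $\mathcal{E}=q^*E$ is already a pullback from $X$; restricting this isomorphism to a section of $q$ (or simply using $E=q_*q^*E$ when $q$ has connected fibres with $q_*\mathcal{O}=\mathcal{O}$) yields $E\cong\mathcal{O}_X^{\oplus\operatorname{rk}E}$. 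The main obstacle, I expect, is the first step: verifying cleanly that $\mathcal{E}$ is trivial on \emph{every} fibre of $\pi$, not merely the generic ones — this is where the codimension bounds of Proposition \ref{proposition5.1}(ii), \refeq{equation43}, and the careful case analysis of Lemma \ref{lemma5.3} (the point $x$ in the nonempty intersection $\mathbb{P}^{k-c-1}_b\cap f(\mathbb{P}^{k-c-1}_a)$, which forces the hypothesis $c\le (k-1)/2$) all have to be combined to rule out jumping behaviour over the exceptional locus $Z(U)$.
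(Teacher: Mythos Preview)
Your overall architecture matches the paper's proof exactly: pull back to $\mathcal{E}=q^*E$ on $q^{-1}(X)$, show $\mathcal{E}$ is trivial on the fibres of $\pi$, apply Corollary~\ref{corollary5.2} to get $\mathcal{E}\cong\pi^*\pi_*\mathcal{E}$, show $\pi_*\mathcal{E}$ is linearly trivial on $\tilde G$ via Lemma~\ref{lemma5.3}, conclude it is trivial, and descend. However, you have misplaced two of the ingredients.

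First, Lemma~\ref{lemma5.3} is not what makes $\mathcal{E}$ trivial on the fibres of $\pi$. For any $x\in\tilde G$ the fibre $\pi^{-1}(x)$ is a projective subspace of $p^{-1}(x)\simeq\mathbb{P}^k$, and $q$ embeds $p^{-1}(x)$ linearly into $G$; hence every line in $\pi^{-1}(x)$ is sent by $q$ isomorphically to a line of $G$ lying in $X$. There is no ``or to a point'' case here, and no need for codimension estimates: \emph{every} fibre of $\pi$ (not just the generic ones) is a projective space, so \cite[Ch.~I, Thm.~3.2.1]{OSS} applies directly. Your ``main obstacle'' is therefore not an obstacle at all. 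Lemma~\ref{lemma5.3} enters only at the next stage, exactly as you describe in your second paragraph: it lifts a line on $\tilde G$ to a curve $C\subset q^{-1}(X)$ with $\pi|_C$ an isomorphism and $q|_C$ either an isomorphism onto a line of $X$ or constant, which forces $(\pi_*\mathcal{E})|_{\mathbb{P}^1}\cong\mathcal{E}|_C$ to be trivial.

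Second, the step ``$\pi_*\mathcal{E}$ linearly trivial on $\tilde G$ $\Rightarrow$ trivial'' is not Proposition~\ref{proposition3.2} or a Van de Ven--type criterion; for $\tilde G=GO(k{+}1,V)$ or $GS(k{+}1,V)$ this is Proposition~\ref{proposition7.4} in the appendix, which the paper proves by an independent induction through quadrics (Proposition~\ref{trivial on lines}). You should cite that result rather than appeal to a ``classical finite-dimensional fact''. The final descent uses Proposition~\ref{trivial on fibres} applied to $q:q^{-1}(X)\to X$.
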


\begin{proof}
Consider the vector bundle $\mathcal{E}:=q^*E$ on $q^{-1}(X)$.
Since $E$ is linearly trivial, for any $x\in \tilde{G}$ $\mathcal{E}|_{\pi^{-1}(x)}$
is a linearly trivial bundle on the projective space $\pi^{-1}(x)$.
A  well-known theorem \cite[Ch. I, Theorem 3.2.1]{OSS}
implies that $\mathcal{E}|_{\pi^{-1}(x)}$ is trivial. Therefore
$ev:\pi^*\pi_*\mathcal{E}\to\mathcal{E}$
is an isomorphism by Corollary \ref{corollary5.2}.

Next, Lemma \ref{lemma5.3} allows us to conclude that $\pi_*\mathcal{E}$ is linearly
trivial. Indeed, if $\mathbb{P}^1\subset \tilde{G}$ is a projective line and
$C\subset q^{-1}(X)$ is a rational curve as in Lemma \ref{lemma5.3}, then
$\pi_*\mathcal{E}|_{\mathbb{P}^1}\simeq \mathcal{E}|_C$,
and hence $\mathcal{E}|_C$ is trivial because of the linear triviality of $E$.

Consequently, $\pi_*\mathcal{E}$ is trivial by Proposition 7.4. from the appendix.
Then $\mathcal{E}\simeq\pi^*\pi_*\mathcal{E}$ is also trivial.
Finally, since $q: q^{-1}(X)\to X$ is a flat projective morphism with irreducible fibres,
$E=q_*\mathcal{E}$ is trivial by Proposition \ref{trivial on fibres}.
\end{proof}

\end{sub}

\begin{sub}{\bf Linear sections of
$\mathbf{G}(\infty),$ $\mathbf{G}\mathrm{O}(\infty,\infty)$,
$\mathbf{G}\mathrm{S}(\infty,\infty)$ of small codimension.}
\rm

Let $\mathbf{G}=\mathbf{G}(\infty),$ $\mathbf{G}\mathrm{O}(\infty,\infty)$, $\mathbf{G}\mathrm{S}(\infty,\infty)$, in particular $ \mathbf{G}=\underset{\to}\lim G(k_m,V_{n_m})$,
$\underset{\to}\lim GO(k_m,V_{n_m}),$ $\underset{\to}\lim GS(k_m,V_{n_m})$, see Definition
\ref{G(k)}. Fix a nondecreasing sequence $\{c_m\}_{m\ge1}$ of integers satisfying the condition
\begin{equation}\label{equation45}
1\le c_m\le(k_m-1)/2.
\end{equation}
Consider an ind-variety $\mathbf{X}=\underset{\to}\lim X_m$ such that,
for each $m\ge1$, $X_m$ is a smooth linear section of codimension $c_m$ of $G_m=G(k_m,V_{n_m})$, $GO(k_m,V_{n_m})$, $GS(k_m,V_{n_m})$ and the embedding
$\phi_m:X_m\hookrightarrow X_{m+1}$ is induced by the embedding
$G_m\hookrightarrow G_{m+1}$. In what follows we call such ind-varieties {\it linear
sections of} $\mathbf{G}$ {\it of small codimension.}

The existence of linear sections $\mathbf{X}$ of $\mathbf{G}$ of small codimension is a consequence of
the Bertini Theorem. Moreover, such a linear section $\mathbf{X}$ is a linear ind-variety and
$\Pic\mathbf{X}$ is generated by
$\mathcal{O}_{\mathbf{X}}(1):=\underset{\leftarrow}\lim \mathcal{O}_{X_m}(1)$.
This follows from the observation that $\mathcal{O}_{X_m}(1)$ generates
$\Pic X_m$ by the Lefschetz Theorem, and from the linearity of the embeddings
$G_m\hookrightarrow G_{m+1}.$

\begin{proposition}\label{theorem5.5}
Let $\mathbf{G}=\mathbf{G}(\infty)$, $\mathbf{G}\mathrm{O}(\infty,\infty)$, $\mathbf{G}\mathrm{S}(\infty,\infty)$.
There exists a linear section $\mathbf{X}$ of small codimension of $\mathbf{G}$ which is
not isomorphic to either of the ind-varieties
$\mathbf{G}(\infty)$, $\mathbf{G}\mathrm{O}(\infty,\infty)$ or
$\mathbf{G}\mathrm{S}(\infty,\infty)$.
\end{proposition}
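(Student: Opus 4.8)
The plan is to exhibit an explicit linear section $\mathbf{X}$ of small codimension whose geometry differs from that of $\mathbf{G}(\infty)$, $\mathbf{G}\mathrm{O}(\infty,\infty)$, $\mathbf{G}\mathrm{S}(\infty,\infty)$ in a way that survives passage to the direct limit. The natural invariant to use is the structure of the family of projective lines, and more precisely the \emph{type} of the surface $F_{N}$ that was extracted in Subsection \ref{prelim}, together with the isomorphism classification of standard ind-grassmannians quoted from [PT3]. Concretely, I would fix $\mathbf{G}=\mathbf{G}(\infty)=\underset{\to}\lim G(k_m,V_{n_m})$ and choose the codimension sequence $\{c_m\}$ to be constant, say $c_m=c=1$ for all $m$ (which satisfies \eqref{equation45} once $k_m\ge3$), so that each $X_m$ is a smooth hyperplane section of the Pl\"ucker-embedded grassmannian $G(k_m,V_{n_m})$ chosen generically, and $\phi_m\colon X_m\hookrightarrow X_{m+1}$ is the restriction of a standard extension $G_m\hookrightarrow G_{m+1}$ compatible with the hyperplanes (possible by Bertini applied inductively).

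The key steps are as follows. \textbf{Step 1.} Record that $\mathbf{X}=\underset{\to}\lim X_m$ is a linear locally complete ind-variety with $\Pic\mathbf{X}=\mathbb{Z}\cdot\mathcal{O}_\mathbf{X}(1)$, as already observed in the paragraph preceding the statement. \textbf{Step 2.} Compute a discrete isomorphism invariant of $\mathbf{X}$ that distinguishes it from the three standard ind-grassmannians. The cleanest candidate is the intersection number $\deg$ of $\mathcal{O}_\mathbf{X}(1)$ against a projective line, together with the self-intersection-type data of the Hirzebruch surfaces $S(x,\mathbb{P}^1)$ appearing in \refeq{diag Sato}: for the standard ind-grassmannians all lines lie in the single family described in \refth{theorem4.4} and the associated surfaces are quadric-cone-free of a uniform type, whereas for a generic hyperplane section $X_m$ one checks by a direct Chern-class computation on $G(k_m,V_{n_m})$ and adjunction that the anticanonical degree along a line, equivalently the index, drops by $c=1$, and this datum is preserved by the linear embeddings $\phi_m$. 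Thus the ``index at infinity'' of $\mathbf{X}$ differs from that of $\mathbf{G}(\infty)$, $\mathbf{G}\mathrm{O}(\infty,\infty)$, $\mathbf{G}\mathrm{S}(\infty,\infty)$. \textbf{Step 3.} Upgrade this numerical discrepancy to a statement about ind-varieties: any isomorphism $\mathbf{f}\colon\mathbf{X}\xrightarrow{\sim}\mathbf{G}'$ of locally complete ind-varieties, after passing to a subsequence, would be given by compatible isomorphisms $f_m\colon X_m\xrightarrow{\sim} G'_{n_m}$ onto the members of a defining chain for $\mathbf{G}'$, hence would have to match the invariant computed in Step 2 with the corresponding invariant of $\mathbf{G}'$ — contradiction. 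Here I would lean on the rigidity results of [PT3]: they already classify which chains of embeddings of finite-dimensional grassmannians can arise, and a hyperplane section of a grassmannian of dimension $\ge$ some bound is never itself isomorphic to a grassmannian (for instance because $\Pic$ together with the $(-K)$-degree forces it), so a chain defining $\mathbf{X}$ can never be cofinally intertwined with a chain of standard extensions of grassmannians.

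The main obstacle I anticipate is \textbf{Step 3}: ruling out an isomorphism of ind-varieties is genuinely more delicate than comparing the individual $X_m$ with grassmannians, because an abstract isomorphism of ind-varieties need not be induced level-by-level by isomorphisms of the defining varieties — it only has to be compatible cofinally after refinement. The correct tool is the structure theory from [PT3]: one shows that the ind-variety $\mathbf{X}$ determines, intrinsically, its family $\mathbf{B}$ of lines and the ind-variety $\mathbf{\Pi}$ of maximal projective ind-subspaces, and that for $\mathbf{G}(\infty)$, $\mathbf{G}\mathrm{O}(\infty,\infty)$, $\mathbf{G}\mathrm{S}(\infty,\infty)$ these satisfy numerical constraints (dimensions of $\Pi_m(x)$, degrees, the integer $N(i)$ of \refeq{pi*N}) that fail for a generic linear section of fixed positive codimension. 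I would therefore organize the proof around: (a) an elementary Chern-class/adjunction computation on $G(k_m,V_{n_m})$ showing the relevant invariant of $X_m$ is $c$ less than that of $G_m$ and is stable under $\phi_m$; and (b) an appeal to the isomorphism classification of [PT3] to conclude that no such stable value occurs among $\mathbf{G}(\infty)$, $\mathbf{G}\mathrm{O}(\infty,\infty)$, $\mathbf{G}\mathrm{S}(\infty,\infty)$, so that $\mathbf{X}$ is not isomorphic to any of them. I would not carry out the Chern-class bookkeeping in detail here, since it is routine once the right invariant (the drop of the Fano index under a hyperplane section) is identified.
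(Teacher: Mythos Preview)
Your proposal has a genuine gap in Step 2: the invariant you suggest — the Fano index, or equivalently the anticanonical degree on a line — is not a well-defined invariant of the ind-variety. For $G(k_m,V_{n_m})$ the index equals $n_m$, for a hyperplane section it is $n_m-1$, and both go to infinity with $m$. There is no ``index at infinity'' that stabilises to a finite number, so the drop by $c$ that you compute at each finite level does not survive passage to the direct limit in any evident way. The same applies to the integer $N(i)$ from \eqref{pi*N} or to the dimensions of $\Pi_m(x)$: these quantities all grow without bound, and an isomorphism of ind-varieties, after refinement, will simply intertwine two sequences tending to infinity. You correctly anticipate in Step 3 that this is the hard point, but you do not actually produce an invariant that resolves it; appealing to the classification of [PT3] does not help, since that classification is precisely about when two chains of \emph{grassmannians} give isomorphic limits, and says nothing about limits of hyperplane sections.

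The paper's argument avoids this difficulty by exploiting a qualitative rather than a numerical invariant, and it requires $c_m$ to \emph{grow}. On each of $\mathbf{G}(\infty)$, $\mathbf{G}\mathrm{O}(\infty,\infty)$, $\mathbf{G}\mathrm{S}(\infty,\infty)$ any projective line lies in \emph{two} distinct maximal projective ind-subspaces $\mathbf{P}^\infty_\alpha$, $\mathbf{P}^\infty_\beta$ meeting precisely in that line (coming from the two families of maximal linear spaces on a grassmannian through a given line). The paper then chooses $k_m$, $n_m$ and $c_m$ so that one of these two ambient families, say the one of dimension $s_m$, satisfies $s_m-c_m=1$; a generic linear section of this codimension then cuts that $\mathbb{P}^{s_m}$ down to the line itself, producing a line in $\mathbf{X}$ lying on a \emph{unique} maximal projective ind-subspace. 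That is an intrinsic geometric property of $\mathbf{X}$ contradicting the two-subspace property of the standard ind-grassmannians, and it requires no level-by-level matching. Note in particular that your choice $c_m\equiv 1$ is too small for this: both families of maximal linear spaces through a line survive a single hyperplane cut and still yield $\mathbf{P}^\infty$'s in the limit.
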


\begin{proof}
Let $\mathbf{G}=\underset{\to}\lim G_m$ where
$G_m=G(k_m,V_{n_m})$, $GO(k_m,V_{2n_m+1})$, $GS(k_m,V_{2n_m})$. Fix $m\ge1$ and let
$\mathbb{P}^1$ be a projective line on $G_m$.
Then there exist unique maximal projective subspaces
$\mathbb{P}^{k_m}$ and $\mathbb{P}^{s_m}$ on
$G_m$ which intersect in $\mathbb{P}^1$.
For $G_m=G(k_m,V_{n_m})$ one has $s_m=n_m-k_m$, and for $G_m=GO(k_m,V_{n_m})$, $GS(k_m,V_{n_m})$
one has $s_m=[\frac{n_m}{2}]-k_m$, see \cite[Lemmas 2.3(iii) and 2.6(ii)]{PT3}.

For $\tilde{m}\ge m$ the embeddings
$\phi_{\tilde{m}}:G_{\tilde{m}}\hookrightarrow G_{\tilde{m}+1}$ in the direct limit
$\underset{\to}\lim G_m$
are given by formula (\ref{for21}), which makes it easy to check that
$\mathbb{P}^{k_m}$ and $\mathbb{P}^{s_m}$ admit extensions
$\mathbb{P}^{k_{\tilde{m}}}\subset G_{\tilde{m}}$,
$\mathbb{P}^{s_{\tilde{m}}}\subset G_{\tilde{m}}$
such that
$\mathbb{P}^{k_m}\subset\mathbb{P}^{k_{\tilde{m}}}$,
$\mathbb{P}^{s_m}\subset\mathbb{P}^{s_{\tilde{m}}}$
and
$$
\mathbb{P}^1=\mathbb{P}^{k_{\tilde{m}}}\cap\mathbb{P}^{s_{\tilde{m}}},\ \ {\tilde{m}}\ge m.
$$
Denoting
$\mathbf{P}^\infty_\alpha:=\underset{\to}\lim \mathbb{P}^{k_{\tilde{m}}}$,
$\mathbf{P}^\infty_\beta:=\underset{\to}\lim \mathbb{P}^{s_{\tilde{m}}}$,
we have
\begin{equation}\label{equation50}
\mathbb{P}^1=\mathbf{P}^\infty_\alpha\cap\mathbf{P}^\infty_\beta.
\end{equation}

We now choose $n_{\tilde{m}}$ and $k_{\tilde{m}}$ in a specific way.
Namely, we assume that $n_{\tilde{m}}=3t_{\tilde{m}}$ for $G_m=G(k_m,V_{n_m})$,
and $[\frac{n_{\tilde{m}}}{2}]=3t_{\tilde{m}}$ for
$G_m=GO(k_m,V_{n_m})$, $GS(k_m,V_{n_m})$,\ $k_{\tilde{m}}=2t_{\tilde{m}}$, $t_{\tilde{m}}\in\mathbb{Z}_{\ge1}$.
Set $c_{\tilde{m}}:=t_{\tilde{m}}-1$.
Then
\begin{equation}\label{=1}
s_{\tilde{m}}-c_{\tilde{m}}=1
\end{equation}
and the inequality (\ref{equation45}) together with the conditions
$\underset{{\tilde{m}}\to\infty}\lim k_{\tilde{m}}=
\underset{{\tilde{m}}\to\infty}\lim s_{\tilde{m}}=\infty$
are satisfied. Next, using (\ref{=1}) and the Bertini Theorem we choose a tower of projective subspaces
$\mathbb{P}^{N_{\tilde{m}}-c_{\tilde{m}}-1}\subset\mathbb{P}(V_{N_{\tilde{m}}})$ for ${\tilde{m}}\ge m$
in general positon so that
$\mathbb{P}^{N_{\tilde{m}}-c_{\tilde{m}}-1}\cap\mathbb{P}^{s_{\tilde{m}}}=\mathbb{P}^1$
and
$\mathbb{P}^{N_{\tilde{m}}-c_{\tilde{m}}-1}\cap\mathbb{P}^{k_{\tilde{m}}}=
\mathbb{P}^{(k_{\tilde{m}}/2)+1}$
for some projective subspaces
$\mathbb{P}^{(k_{\tilde{m}}/2)+1}$ of $G_{{\tilde{m}}}$.
As a result, we obtain a linear section
$\mathbf{X}:=\underset{\to}\lim(\mathbb{P}^{N_{\tilde{m}}-c_{\tilde{m}}-1}\cap G_{{\tilde{m}}})$
of $\mathbf{G}$ of small codimension and a projective line $\mathbb{P}^1\subset\mathbf{X}$ such that
$\mathbb{P}^1$ is contained in a unique linear ind-projective subspace $\mathbf{P}^\infty$
of $\mathbf{X}$, namely
$\mathbf{P}^\infty:=\underset{\to}\lim \mathbb{P}^{(k_{\tilde{m}}/2)+1}$.
If $\mathbf{X}$ were isomorphic to
$\mathbf{G}(\infty)$, $\mathbf{G}\mathrm{O}(\infty,\infty)$,
$\mathbf{G}\mathrm{S}(\infty,\infty)$, this would contradict to (\ref{equation50}).

\end{proof}

Now we show that a linear section $\mathbf{X}$ of $\mathbf{G}$ of small codimension satisfies the
properties L, A and T. The property L is clear as $\Pic\mathbf{X}$ is generated by
$\mathcal{O}_{\mathbf{X}}(1)$, and $H^1(X_m,\mathcal{O}_{X_m}(a))$ vanishes for $a<0$ by
Kodaira's Theorem. The property T is established in Proposition \ref{proposition5.4}.
It remains to establish the property A. Part (A.i) is clear as $\mathcal{O}_{X_m}(1)$
is very ample. For parts (A.ii)-(A.iv) we consider in detail only the case when
$\mathbf{X}$ is a linear section of $\mathbf{G}\mathrm{O}(\infty,\infty)$.

Let $B(G_m)$ be the family of all projective lines in $G_m=GO(k_m,V_{n_m})$, and $B_m$ be its subfamily consisting of
those projective lines which lie in $X_m$. By definition, $X_m$ is the intersection of $G_m$ with a subspace
$\mathbb{P}(U_m)$ of $\mathbb{P}(V_{N_m})$ for a fixed $U_m\in G(N_m-c_m,V_{N_m})$ in general position.
The grassmannians $G(2,V_{N_m})$ and $G(2,U_m)$
can be be thought of as the grassmannians of projective lines in
$\mathbb{P}(V_{N_m})$ and $\mathbb{P}(U_m)$ respectively. Then $B_m=B(G_m)\cap G(2,U_m)$ where the intersection is taken in $G(2,V_{N_m})$. We show next that $B_m$ is irreducible.

Let $B$ be an irreducible component of $B_m$. Since $G(2,V_{N_m})$ is smooth, the
subadditivity of  codimensions \cite[Thm. 17.24]{Ha}) yields
\begin{equation}\label{equation46}
{\rm codim}_{B(G_m)}B\le{\rm codim}_{G(2,V_{N_m})}G(2,U_{m})=2c_m.
\end{equation}
Consider the graph of incidence
$\Sigma_m:=\{(x,\mathbb{P}^{k_m})\in G_m\times \tilde{G}_m\ |\ x\in\mathbb{P}^{k_m}\}$
with its projections
$\tilde{G}_m\overset{p_m}\leftarrow\Sigma_m\overset{q_m}\to G_m$,
where $\tilde{G}_m:=GO(k_m+1,V_{n_m})$. Let $B(\Sigma_m)$ be the family of all projective lines in $\Sigma_m$ lying in the fibres of the projection $p_m$.
Denote by $B(q_m^{-1}(X_m))$ the subfamily of $B(\Sigma_m)$ consisting of those projective
lines which lie in $q_m^{-1}(X_m)$. The projection $q_m:\Sigma_m\to G_m$ induces a morphism $r_{G_m}:B(\Sigma_m)\to B(G_m)$ which is bijective since any projective line on $G_m$ lies
in a unique maximal projective space $\mathbb{P}^{k_m}$
(see \cite[Section 2]{PT3}). The space $\mathbb{P}^{k_m}$ is an isomorphic image via
$q_m$ of some fibre of $p_m$. Respectively, the restricted morphism
$r_{X_m}:=r_{G_m}|_{B(q_m^{-1}(X_m))}:B(q_m^{-1}(X_m))\to B_m$
is a bijection. Hence, for any irreducible component $B'$ of $B(q_m^{-1}(X_m))$,
(\ref{equation46}) yields the inequality
\begin{equation}\label{equation47}
{\rm codim}_{B(\Sigma_m)}B'\le{\rm codim}_{G(2,V_{N_m})}G(2,U_{m})=2c_m.
\end{equation}

The projection $p_m$ induces a projection $\rho_m:B(\Sigma_m)\to \tilde{G}_m$. Let
$$
\emptyset=Z_{c_m}(U_m)\subset Z_{c_m-1}(U_m)\subset...\subset Z_0(U_m)\subset \tilde{G}_m
$$ be the filtration (\ref{equation38}) of $\tilde{G}_m$ by closed subvarieties
$Z_i(U_m)$ of codimensions in $\tilde{G}_m$ given by (\ref{equation42}) where we put $c=c_m,$ $k=k_m$.
This filtration yields a decomposition $B(q_m^{-1}(X_m))=\underset{0\le i\le c_m}\sqcup B_i$,
where
$B_0:=B(q_m^{-1}(X_m))\cap\rho^{-1}_m(\tilde{G}_m\setminus Z_0(U_m))$,
$B_i:=B(q_m^{-1}(X_m))\cap\rho^{-1}_m(Z_{i-1}(U_m)\setminus Z_i(U_m))$, $i=1,...,c_m$. Formula (\ref{equation42})
implies that ${\rm codim}_{B(\Sigma_m)}B_0=2c_m$, ${\rm codim}_{B(\Sigma_m)}B_i>2c_m$ for $i=1,...,c_m$. This together
with (\ref{equation47}) yields the irreducibility of $B(q_m^{-1}(X_m))$, hence of $B_m$ as well.

Now let $\Pi(G_m)$ be the family of projective spaces $\mathbb{P}^{k_m-1}$ lying in
$B(G_m)$ and defined by the right-hand side of (\ref{Pim}). Set
\begin{equation}\label{equation48}
\mathbf{\Pi}:=\underset{\to}\lim\Pi_m,
\end{equation}
where $\Pi_m:=\{\mathbb{P}^{k_m-c_m-1}\subset B_m\ |\ \mathbb{P}^{k_m-c_m-1}$ is a linear subspace of some
$\mathbb{P}^{k_m-1}\in\Pi(G_m)\}$.

Fix $m$ and let $\pi:=p_m|_{q_m^{-1}(X_m)}:q_m^{-1}(X_m)\to \tilde{G}_m$
be the projection. Consider the relative grassmannian
$G_\pi:=\{\mathbb{P}^1\subset q_m^{-1}(X_m)\ |\ \mathbb{P}^1$
lies linearly in a fibre of the projection $\pi\}$ with induced projections
$\rho:G_\pi\to \tilde{G}_m$ and $q_\pi:G_\pi\to B_m$.
By definition, the fibre $\rho^{-1}(x)$ over an arbitrary point $x\in \tilde{G}_m$ is the grassmannian of projective lines in the projective space $\pi^{-1}(x)$.
(Note that for a point $x\in \tilde{G}_m$ in general position the fibre $\pi^{-1}(x)$
is a projective space
$\mathbb{P}^{k_m-c_m}$, hence $\rho^{-1}(x)\simeq G(2,\mathbb{C}^{k_m-c_m+1})$.)
Furthermore, the projection $q_{\pi}$ is birational.

By construction, the projective spaces $\mathbb{P}^{k_m-c_m-1}\in\Pi_m$ are isomorphic
images under $q_\pi$ of projective spaces $\mathbb{P}^{k_m-c_m-1}$
lying linearly in the fibres of $\rho:G_\pi\to\tilde{G}_m$. Considering the set
$G_{\rho}:=\{\mathbb{P}^{k_m-c_m}\subset q_m^{-1}(X_m)\ |\ \mathbb{P}^{k_m-c_m}$ lies linearly in a fibre of
$\pi:q_m^{-1}(X_m)\to \tilde{G}_m\}$, we obtain a
$\mathbb{P}^{k_m-c_m}$-fibration $q_{\rho}:\Pi_m\to G_{\rho}$ with fibre
$q_{\rho}^{-1}(y)=\mathbb{P}^{k_m-c_m}$ over a given point $y=\{\mathbb{P}^{k_m-c_m}\}\in G_{\rho}$.
Trerefore, to check the irreducibility of $\Pi_m$ it suffices to check  the irreducibility of $G_{\rho}$.
Note that the projection $\pi$ induces a projection $\tau:G_\rho\to \tilde{G}_m$ such that the fibre of $\tau$ over a point $V_{k_m+1}\in \tilde{G}_m$ coincides with the
grassmannian $G(k_m-c_m+1,W)$, where $W\subset V_{k_m+1}$ is the subspace
defined by the condition $\mathbb{P}(W)=\pi^{-1}(x)$. As above,
(\ref{equation42}) implies that, for $i\ge1$, the locally closed subsets
$\tau^{-1}(Z_i(W))$ of $G_{\rho}$ have dimensions strictly less than that of the open
subset $\tau^{-1}(Z(W)\setminus Z_i(W))$. This proves the irreducibility of $G_{\rho}$,
hence of $\Pi_m$.

Next, (\ref{Bm(x)}) implies that, for any point $x=V_{k_m}\in X_m$ and $\tilde{m}\ge m$,
the base $B_m(x)$ of the family of projective lines on $X_m$
passing through $x$ is a linear section of the variety
$\mathbb{P}((\phi_{{\tilde m}-1}\circ...\circ\phi_m)(V_{k_m})^*)\times
GO(1,(\phi_{{\tilde m}-1}\circ...\circ\phi_m)(V_{k_m})^\perp/(\phi_{{\tilde m}-1}\circ...\circ\phi_m)(V_{k_m}))
\subset\mathbb{P}(V^*_{k_{\tilde{m}}}\otimes V^\perp_{k_{\tilde{m}}}/V_{k_{\tilde{m}}})$
by a projective subspace of codimension $c_{{\tilde{m}}}$ in
$\mathbb{P}(V^*_{k_{\tilde{m}}}\otimes V^\perp_{k_{\tilde{m}}}/V_{k_{\tilde{m}}})$.
Let $b_{{\tilde{m}}}(x):B_{{\tilde{m}}}(x)\to Q_{(\tilde{m})}(x):=
GO(1,V^\perp_{k_{\tilde{m}}}/V_{k_{\tilde{m}}})$
be the natural projection. Note that the fibres of $b_{\tilde{m}}(x)$ are
projective spaces of dimension at least $k_{\tilde{m}}-c_{\tilde{m}}-1$,
and, for points $x$ of $X_m$ and
$z\in Q_{\tilde{m}}(x)$ in general position the fibre $b_{\tilde{m}}(x)^{-1}(z)$
is a projective space
$\mathbb{P}^{k_{\tilde{m}}-c_{\tilde{m}}-1}$
by the Bertini Theorem. Moreover, we have an ind-variety
$\mathbf{B}(x)=\underset{\to}\lim B_{\tilde{m}}(x)$, ${\tilde{m}}\ge m$.

In a similar way we obtain that
$\mathbf{\Pi}(x):=\{\mathbf{P}^\infty\in \mathbf{\Pi}\ |\ \mathbf{P}^\infty\ni x\}$
is the ind-variety $\underset{\to}\lim \Pi_{\tilde{m}}(x),$ ${\tilde{m}}\ge m$,
where
$\Pi_{\tilde{m}}(x):=\{\mathbb{P}^{k_{\tilde{m}}-c_{\tilde{m}}-1}\subset B_m|$
$\mathbb{P}^{k_{\tilde{m}}-c_{\tilde{m}}-1}$ lies as a linear projective subspace in a fibre of the
projection $b_{\tilde{m}}(x)\}$. Let $p(x):\Pi_{\tilde{m}}(x)\to Q_{(\tilde{m})}(x)$
be the induced projection. By construction, for any point
$z\in Q_{(\tilde{m})}(x)$ the fibre $p(x)^{-1}(z)$ is the grassmannian
$G(k_{\tilde{m}}-c_{\tilde{m}},\mathbb{C}^{\dim(b_{\tilde{m}}(x)^{-1}(z))+1}).$
(In particular, this grassmannian is just a point for
$x\in X_{\tilde{m}}$ and $z\in Q_{(\tilde{m})}(x)$ in general position.)
This implies the property (A.ii) since $ Q_{(\tilde{m})}(x)$ is an irreducible quadric hypersurface.

The property (A.iii) is evident. As for the property (A.iv), let $Y$
be a fixed variety and $f:\Pi_{\tilde{m}}(x)\to Y$ be a morphism. For
$\tilde{m}\to\infty$ the fibres of $p(x)$ are either points or are grassmannians whose
dimensions tend to infinity.
Therefore $f$ maps each fibre of $p(x)$ to a point, i.e. $f$ factors through the induced
morphism $g:Q_{(\tilde{m})}(x)\to Y$.
As $Q_{(\tilde{m})}(x)$ is a smooth quadric hypersurface whose dimension
tends to infinity as ${\tilde{m}}\to\infty$, it follows that for large enough ${\tilde{m}}$ the morphism $g$ is a constant map. Hence, $f$ is constant too, and (A.iv) is proved.

Theorem \ref{E on linear X} yields now the following.

\begin{theorem}\label{theorem5.6}
A vector bundle on a linear section $\mathbf{X}$ of small codimension of
$\mathbf{G}(\infty),$ $\mathbf{G}\mathrm{O}(\infty,\infty)$, $\mathbf{G}\mathrm{S}(\infty,\infty)$
is isomorphic to a direct sum of line bundles $\mathcal{O}_{\mathbf{X}}(a_i)$ for
$a_i\in\mathbb{Z}$.
\end{theorem}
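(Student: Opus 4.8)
The plan is to apply Theorem~\ref{E on linear X} directly, since the discussion preceding the statement has already reduced everything to the verification of the properties L, A and T. First I would recall the setup: a linear section $\mathbf{X}=\underset{\to}\lim X_m$ of small codimension of $\mathbf{G}(\infty)$, $\mathbf{G}\mathrm{O}(\infty,\infty)$ or $\mathbf{G}\mathrm{S}(\infty,\infty)$ is a linear ind-variety whose Picard group is generated by the single ample line bundle $\mathbf{L}:=\mathcal{O}_{\mathbf{X}}(1)=\underset{\leftarrow}\lim\mathcal{O}_{X_m}(1)$, so that the index set $\Theta_{\mathbf{X}}$ has one element and there is a single associated family $\mathbf{B}$ of projective lines. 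Property L holds because $H^1(X_m,\mathcal{O}_{X_m}(a))=0$ for $a<0$ by Kodaira's theorem; property T is Proposition~\ref{proposition5.4}; and property A was checked above, part (A.i) from the very ampleness of $\mathcal{O}_{X_m}(1)$ and parts (A.ii)--(A.iv) from the explicit description of $B_{\tilde m}(x)$ and $\Pi_{\tilde m}(x)$ as towers of projective bundles over smooth quadric hypersurfaces whose dimension grows with $m$.

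Given these properties, I would invoke Theorem~\ref{E on linear X}(i) to produce, for an arbitrary vector bundle $\mathbf{E}$ on $\mathbf{X}$, a filtration
$$
0=\mathbf{E}_0\subset\mathbf{E}_1\subset\dots\subset\mathbf{E}_t=\mathbf{E}
$$
by vector subbundles with uniform successive quotients, and then part (ii), which uses property T, to conclude that this filtration splits with quotients $\mathbf{E}_k/\mathbf{E}_{k-1}\cong\mathrm{rk}(\mathbf{E}_k/\mathbf{E}_{k-1})\,\mathbf{L}^{\otimes a_k}$ for suitable $a_k\in\mathbb{Z}$. Since $\mathbf{L}^{\otimes a_k}=\mathcal{O}_{\mathbf{X}}(a_k)$, this exhibits $\mathbf{E}$ as a direct sum $\bigoplus_i\mathcal{O}_{\mathbf{X}}(a_i)$, which is exactly the assertion.

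There is essentially no obstacle left at this stage: the substantive work has already been carried out in establishing property A (notably the irreducibility of the incidence ind-varieties $\mathbf{B}$ and $\mathbf{\Pi}$ and the constancy statement (A.iv), which rely on the fibre-dimension growth arising from the condition $c_m\le(k_m-1)/2$) and in proving property T in Proposition~\ref{proposition5.4} via the relative grassmannian $q^{-1}(X)\to\tilde{G}$, Corollary~\ref{corollary5.2}, and descent along the fibres of $q$. The only point that still needs a word is that the cases of $\mathbf{G}(\infty)$ and $\mathbf{G}\mathrm{S}(\infty,\infty)$ are handled by the same argument with orthogonal grassmannians replaced by ordinary, respectively symplectic, ones, exactly as in the treatment of (A.ii)--(A.iv) and of Proposition~\ref{proposition5.4}.
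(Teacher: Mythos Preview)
Your proposal is correct and matches the paper's approach exactly: the paper simply states that Theorem~\ref{E on linear X} yields Theorem~\ref{theorem5.6} once the properties L, A and T have been verified in the preceding discussion, which is precisely what you do.
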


\end{sub}

\vspace{1cm}
\section{Ind-products and their subvarieties satisfying the properties L, A, T}\label{section6}

\vspace{5mm}

\begin{sub}{\bf Finite or countable ind-products satisfying the properties L, A, T.}\label{subsection 6.1}

\rm
Let $\mathbf{X}^{\xi}=\underset{\to}\lim X_m^{\xi}$, ${\xi}\in\Xi$, be a
countable collection of ind-varieties.
We assume that for each ${\xi}\in\Xi$ and each $m\ge1$ we have a fixed inclusion
$X_m^{\xi}\subset X_{m+1}^{\xi}$.
On every $\mathbf{X}^{\xi}$ we fix a point $x_0^{\xi}$.
Without loss of generality we assume that
$x_0^{\xi}\in X_1^{\xi}$.
Fix a bijection
$\nu:\mathbb{N}\overset{\sim}\to\Xi$ and denote $\underline{m}:=\{1,2,...,m\}$.
Set
$$
{}_{\nu}X_m:=\underset{\xi\in\nu(\underline{m})}\times X_m^{\xi}
$$
and consider the embeddings
$$
{}_{\nu}X_m\hookrightarrow{}_{\nu}X_{m+1}={}_{\nu}X_m\times X_m^{\nu(m+1)},\ \
x\mapsto\big(x,x_0^{\nu(m+1)}\big),\ m\ge1.
$$
We call the ind-variety $\mathbf{X}:=\underset{\to}\lim{}_{\nu}X_m$
an {\it ind-product} of the ind-varieties
$\{\mathbf{X}^{\xi}\}_{\xi\in\Xi}$
and denote it as
\begin{equation}\label{def prod}
\mathbf{X}=\underset{\xi\in\Xi}\times\mathbf{X}^{\xi}.
\end{equation}

Note that $\mathbf{X}$ does not depend, up to an
isomorphism of ind-varieties, on the choice of the bijection
$\nu:\mathbb{N}\overset{\sim}\to\Xi$,
and thus the notation (\ref{def prod}) is consistent. Indeed, let
$\nu':\mathbb{N}\overset{\sim}\to\Xi$
be another bijection, and let
$\psi:=\nu'^{-1}\circ\nu:\mathbb{N}\overset{\sim}\to\mathbb{N}$
be the induced bijection. An isomorphism
$\mathbf{f}:\lim{}_{\nu}X_m\overset{\sim}\to\lim{}_{\nu'}X_m$,
$\mathbf{f}=\{f_m:{}_{\nu}X_m\to{}_{\nu'}X_{\tilde{m}(m)}\}$,
and its inverse
$\mathbf{g}=\mathbf{f}^{-1}:\lim{}_{\nu'}X_m\overset{\sim}\to\lim{}_{\nu}X_m$,
$\mathbf{g}=\{g_m:{}_{\nu'}X_m\to{}_{\nu}X_{\tilde{m}(m)}\}$,
for
$\tilde{m}(m):=\underset{m'>m}\min\{m'\ |\ \psi(\underline{m})\subset\underline{m}'\}$,
are given by the formulas
$$
f_m:{}_{\nu}X_m=\underset{\xi\in\nu(\underline{m})}\times X_m^{\xi}\to{}_{\nu'}X_{\tilde{m}(m)}=
(\underset{\xi\in\nu(\underline{m})}\times X_{\tilde{m}(m)}^{\xi})\times
(\underset{\xi\in\nu'(\underline{\tilde{m}(m)})\setminus\nu(\underline{m})}\times X_{\tilde{m}(m)}^{\xi}),
$$
$$
x\mapsto\big(x,\{x_0^{\xi}\}_{\xi\in\nu'(\underline{\tilde{m}(m)})
\setminus\nu(\underline{m})}\big),
$$
$$
g_m:{}_{\nu'}X_m=\underset{\xi\in\nu'(\underline{m})}\times X_m^{\xi}\to{}_{\nu}X_{\tilde{m}(m)}=
(\underset{\xi\in\nu'(\underline{m})}\times X_{\tilde{m}(m)}^{\xi})\times
(\underset{\xi\in\nu(\underline{\tilde{m}(m)})\setminus\nu'(\underline{m})}\times X_{\tilde{m}(m)}^{\xi}),
$$
$$
x\mapsto\big(x,\{x_0^{\xi}\}_{\xi\in\nu(\underline{\tilde{m}(m)})
\setminus\nu'(\underline{m})}\big).
$$
Note in addition that in principle $\mathbf{X}$ depends on the choice of points $x_0^{\xi}$,
however we suppress this dependence in the notation (\ref{def prod}).

The reason we call $\mathbf{X}$ an ind-product rather than a product is that
$\mathbf{X}$ is not a direct product in the category of ind-varieties. Of course, there
are well-defined projections of ind-varieties
$\mathbf{p}_{\xi}: \mathbf{X}\to\mathbf{X}^{\xi}$,
$\mathbf{p}_{\xi}=\underset{\to}\lim p_{\xi m}$,
where
$p_{\xi m}:{}_{\nu}X_m\to X_m^{\xi}$
is the projection onto the ${\xi}$-factor for ${\xi}\in\nu(\underline{m})$, and the
constant map
$p_{\xi m}:{}_{\nu}X_m\to x_0^{\nu(m)}$ for ${\xi}\not\in\nu(\underline{m})$.
However, $\mathbf{X}$ fails to satisfy the universality property of a product.

If $\Xi$ is finite, we define the ind-product $\underset{\xi\in\Xi}\times\mathbf{X}^{\xi}$
as the set-theoretic direct product of the $\mathbf{X}^{\xi}$'s.
Then $\underset{\xi\in\Xi}\times\mathbf{X}^{\xi}=\underset{\to}\lim(\underset{\xi\in\Xi}\times{X}_m^{\xi})$,
and $\underset{\xi\in\Xi}\times\mathbf{X}^{\xi}$ clearly satisfies the universality property of a direct product in the category of ind-varieties.

Now let $\Xi$ be finite or countable and let the ind-varieties
$\mathbf{X}^{\xi}=\underset{\to}\lim X_m^{\xi}$
satisfy the properties L, A and T. This means that on each $\mathbf{X}^{\xi}$
there exists a collection
$\mathbf{L}^{\xi}:=\{\mathbf{L}_{\theta}^{\xi}\ |\ \theta\in\Theta_{\mathbf{X}^{\xi}}\}$
of line bundles and a collection
$\mathbf{B}^{\xi}:=\{\mathbf{B}_{\theta}^{\xi}=
\underset{\to}\lim B_{\theta m}^{\xi}\ |\ \theta\in\Theta_{\mathbf{X}^{\xi}}\}$
of ind-varieties of projective lines such that $\mathbf{X}^{\xi}$ satisfies
the properties L, A and T. The collections $\{\mathbf{L}^{\xi}\}_{\xi\in\Xi}$ yield the following
countable collection of line bundles on $\mathbf{X}$
\begin{equation}\label{equation55}
\mathbf{L}=\{\mathbf{p}_{\xi}^*\mathbf{L}_{\theta}^{\xi}\ |\
\theta\in\Theta_{\mathbf{X}^{\xi}},\ \xi\in\Xi\}.
\end{equation}
Moreover, any projective line $\mathbb{P}^1$ on $\mathbf{X}^{\xi}$ determines a projective
line $\mathbb{P}^1_{\mathbf{X}}$ on $\mathbf{X}$ such that
$\mathbf{p}_{\xi}(\mathbb{P}^1_{\mathbf{X}})=\mathbb{P}^1$
and
$\mathbf{p}_{\xi'}(\mathbb{P}^1_{\mathbf{X}})=\{x_0^{\xi'}\}$ for $\xi'\ne\xi$.
Therefore each ind-variety $\mathbf{B}_{\theta}^{\xi}$
of projective lines on $\mathbf{X}^{\xi}$ "lifts" to an ind-variety of projective lines
on $\mathbf{X}$. In this way we obtain a collection $\mathbf{B}$ of ind-varieties of
projective lines on $\mathbf{X}$.
Since each $\mathbf{X}^{\xi}$
satisfies the properties L, A and T, it is easy to check that
$\mathbf{X}$ satisfies the same properties with respect to the collections
$\mathbf{L}$ and $\mathbf{B}$.

This, together with Theorem \ref{E on linear X}, leads to the following theorem.

\begin{theorem}\label{theorem6.1}
A vector bundle on $\mathbf{X}=\underset{\xi\in\Xi}\times \mathbf{X}^{\xi}$,
where each ind-variety $\mathbf{X}^{\xi}$ satisfies the properties L, A and T,
is isomorphic to a direct sum of line bundles.
\end{theorem}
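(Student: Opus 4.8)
The plan is to spell out the assertion made just before the statement --- that the ind-product $\mathbf{X}=\underset{\xi\in\Xi}\times\mathbf{X}^{\xi}$ again satisfies the properties L, A and T, with respect to the collection $\mathbf{L}$ of \eqref{equation55} and the collection $\mathbf{B}$ of lifted families of projective lines --- and then to invoke \refth{E on linear X}(ii). The verifications of L and A are essentially bookkeeping; the real work is property T.

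First I would check linearity and property L. Since each $X_m^{\xi}$ is smooth complete with free Picard group, one has $\Pic^0 X_m^{\xi}=0$, hence $H^1(X_m^{\xi},\mathcal{O}_{X_m^{\xi}})=0$ and, by Künneth, $\Pic({}_{\nu}X_m)=\bigoplus_{\xi\in\nu(\underline{m})}\pr_{\xi}^{*}\Pic X_m^{\xi}$; linearity of each $\mathbf{X}^{\xi}$ then makes the transition maps on Picard groups surjective. Taking $\Theta_{\mathbf{X}}=\bigsqcup_{\xi\in\Xi}\Theta_{\mathbf{X}^{\xi}}$ and $\mathbf{L}_{(\xi,\theta)}=\mathbf{p}_{\xi}^{*}\mathbf{L}_{\theta}^{\xi}$, only finitely many of these restrict nontrivially to a given ${}_{\nu}X_m$, their nontrivial restrictions form a free basis of $\Pic({}_{\nu}X_m)$, and the required vanishing of $H^1({}_{\nu}X_m,\otimes_{(\xi,\theta)}(\mathbf{L}_{(\xi,\theta)}|_{{}_{\nu}X_m})^{\otimes a_{(\xi,\theta)}})$ for a twist with some $a_{(\xi,\theta)}<0$ follows from Künneth: every summand carries a tensor factor $H^1(X_m^{\xi},-)$ or $H^0(X_m^{\xi},-)$ of a twist of $X_m^{\xi}$ by a line bundle with a negative exponent, which vanishes by property L for $\mathbf{X}^{\xi}$ (using also $H^1(X_m^{\xi},\mathcal{O})=0$ and that a line bundle negative along the covering family of lines of a $\xi$-family has no sections).

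Next I would reduce property A to property A for the factors. By \eqref{Li|Pj} and the definition of $\mathbf{L}$, a projective line of the $(\xi,\theta)$-family on $\mathbf{X}$ is a curve $C$ with $\mathbf{p}_{\xi}(C)$ a line of the $\theta$-family on $\mathbf{X}^{\xi}$ and $\mathbf{p}_{\xi'}(C)$ a point for every $\xi'\ne\xi$; hence for $x=(x^{\xi'})_{\xi'}\in\mathbf{X}$ there are canonical identifications $\mathbf{B}_{(\xi,\theta)}(x)\cong\mathbf{B}_{\theta}^{\xi}(x^{\xi})$ and $\mathbf{\Pi}_{(\xi,\theta)}(x)\cong\mathbf{\Pi}_{\theta}^{\xi}(x^{\xi})$, where $\mathbf{\Pi}_{(\xi,\theta)}$ is the lift of $\mathbf{\Pi}_{\theta}^{\xi}$. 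Since $H^0({}_{\nu}X_m,\mathbf{p}_{\xi}^{*}L_{\theta m}^{\xi})=H^0(X_m^{\xi},L_{\theta m}^{\xi})$, the morphism attached to $\mathbf{p}_{\xi}^{*}L_{\theta m}^{\xi}$ factors through $\pr_{\xi}$, so conditions (A.i)--(A.iv) for $\mathbf{X}$ and the $(\xi,\theta)$-family become verbatim those for $\mathbf{X}^{\xi}$ and its $\theta$-family, which hold by hypothesis.

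The hard part will be property T, which I would treat in two stages. For finite $\Xi$ I would induct on $|\Xi|$: writing $\mathbf{X}=\mathbf{X}^{\xi_0}\times\mathbf{Y}$ with $\mathbf{Y}=\underset{\xi\ne\xi_0}\times\mathbf{X}^{\xi}$ and $\mathbf{p}\colon\mathbf{X}\to\mathbf{Y}$ the projection, a linearly trivial bundle $\mathbf{E}$ restricts on every slice $\mathbf{X}^{\xi_0}\times\{y\}\cong\mathbf{X}^{\xi_0}$ to a linearly trivial --- hence, by property T for $\mathbf{X}^{\xi_0}$, trivial --- bundle; cohomology and base change along $\pr_{Y_m}\colon{}_{\nu}X_m=X_m^{\xi_0}\times Y_m\to Y_m$ (using $H^1(X_m^{\xi_0},\mathcal{O})=0$) produces bundles $F_m$ on $Y_m$ with $\pr_{Y_m}^{*}F_m\cong E_m$, and these fit into an inverse system (the transition maps being isomorphisms, as restriction of constants), defining $\mathbf{F}$ on $\mathbf{Y}$ with $\mathbf{p}^{*}\mathbf{F}\cong\mathbf{E}$; pulling $\mathbf{Y}$-lines back to $\mathbf{X}$ shows $\mathbf{F}$ is linearly trivial, hence trivial by induction, hence $\mathbf{E}$ is trivial. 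For countable $\Xi$ I would exhaust $\mathbf{X}$ by the finite sub-ind-products $\mathbf{X}_{\le N}=\underset{\xi\in\nu(\underline{N})}\times\mathbf{X}^{\xi}$ (which satisfy L, A, T by the finite case): a linearly trivial $\mathbf{E}$ restricts to a linearly trivial, hence trivial, bundle on each $\mathbf{X}_{\le N}$, and after fixing a base point $o$ and a basis of $\mathbf{E}_o$, the unique $r$-tuple of sections of $\mathbf{E}|_{\mathbf{X}_{\le N}}$ extending that basis (unique since $H^0(\mathbf{X}_{\le N},\mathcal{O})=\mathbb{C}$) trivializes $\mathbf{E}|_{\mathbf{X}_{\le N}}$ and is compatible in $N$, gluing to a trivialization of $\mathbf{E}$. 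Once L, A and T are established for $\mathbf{X}$, \refth{E on linear X}(ii) gives the claim. I expect the cohomology-and-base-change descent in the inductive step, together with the passage from the finite to the countable case, to be the only genuinely technical points.
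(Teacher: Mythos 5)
Your proposal follows the paper's own route: the paper proves Theorem \ref{theorem6.1} precisely by asserting that $\mathbf{X}=\underset{\xi\in\Xi}\times\mathbf{X}^{\xi}$ satisfies L, A and T with respect to the lifted collections $\mathbf{L}$ and $\mathbf{B}$ (a verification it leaves as ``easy to check'') and then invoking Theorem \ref{E on linear X}(ii), which is exactly your plan. Your detailed verifications are sound, and your treatment of property T (slicing, descent along the projection via Proposition \ref{trivial on fibres}, induction on the number of factors, then exhaustion by finite sub-ind-products for countable $\Xi$) is in the same spirit as the argument the paper itself spells out for the related Theorem \ref{theorem6.5}, so this counts as essentially the same approach, correctly carried out.
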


\end{sub}

\begin{sub}{\bf Linear sections of ind-products.}\label{multilin secns}

\rm
In this subsection we assume that $\Xi$ is finite, $\Xi=\{1,2,...,l\}$, and that the
ind-varieties
$\mathbf{X}^i=\underset{\to}\lim X_m^i$, $i\in\Xi$,
are copies of the standard ind-grassmannians
$\mathbf{G}(\infty),$ $\mathbf{G}\mathrm{O}(\infty,\infty)$,
$\mathbf{G}\mathrm{S}(\infty,\infty)$. By the above,
$\underset{i\in\Xi}\times\mathbf{X}^i$
is a direct limit $\underset{\to}\lim(\underset{i\in\Xi}\times X_m^i)$.
Each ${X}_m^i$ is a (possibly isotropic) grassmannian which we consider as lying via the Pl\"ucker embedding in
$\mathbb{P}^{N_{im}-1}=\mathbb{P}(V_{N_{im}})$,
and the embeddings
$\underset{i\in\Xi}\times X_m^i\hookrightarrow\underset{i\in\Xi}\times X_{m+1}^i$
are induced by standard extensions
${X}_m^i \hookrightarrow {X}_{m+1}^i$.
We also assume that
$\underset{i\in\Xi}\times X_m^i$ lies in $\mathbb{P}^{N_{m}-1}$ via the Segre embedding
$\underset{i\in\Xi}\times \mathbb{P}^{N_{im}-1}\hookrightarrow \mathbb{P}^{N_{m}-1}.$

For each $m\ge1$ and $i\in\Xi$ set
$$
\widehat{X}_m^i:=X_m^1\times... \times X_m^{i-1}\times X_m^{i+1}\times ...\times X_m^l,
$$
and for $X_m^i=G(k_{im},V_{n_{im}})$ (respectively, $X_m^i=GO(k_{im},V_{n_{im}})$ or
$X_m^i=GS(k_{im},V_{n_{im}})$), set ${X_m^i}^+:=G(k_{im}+1,V_{n_{im}})$ (respectively,
${X_m^i}^+=GO(k_{im}+1,V_{n_{im}})$ or ${X_m^i}^+=GS(k_{im}+1,V_{n_{im}})$).
Consider the flag variety
$\Sigma_{im}:=\{(V_{k_{im}}, V_{k_{im}+1})\in X_m^i\times {X_m^i}^+\ |\ V_{k_{im}}\subset V_{k_{im}+1}\}$
with natural projections
${X_m^i}^+{\leftarrow}\Sigma_{im}\to X_m^i$.
There are induced projections
\begin{equation}\label{equation58}
{X_m^i}^+\times\widehat{X}_m^i\overset{p_{im}}{\leftarrow}
\Sigma_{im}\times\widehat{X}_m^i\overset{q_{im}}\to X_m^i\times \widehat{X}_m^i\simeq \widehat{X}_{m},\ \
i\in\Xi,
\end{equation}
and we put
\begin{equation}\label{equation59}
\pi_{im}:= p_{im}|_{q^{-1}_{im}(X_m)}:q^{-1}_{im}(X_m)\to
{X_m^i}^+\times\widehat{X}_m^i,\ \ i\in\Xi.
\end{equation}

Now let $\{c_m\}_{m\ge1}$ be a nondecreasing sequence of integers satisfying the conditions
\begin{equation}\label{equation56}
1\le c_m\le {\rm min}\{\frac{k_{im}-1}{2}\ |\ i\in\Xi\},
\end{equation}
where
$X_m^i=G(k_{im},V_{n_{im}}),$ $GO(k_{im},V_{n_{im}}),$  $GS(k_{im},V_{n_{im}}).$
For each $m\ge1$
consider a linear section $X_m$ of $\underset{i\in\Xi}\times X_m^i$,
$$
X_m:=(\underset{i\in\Xi}\times X_m^i)\cap \mathbb{P}^{N_{m}-c_m-1},
$$
where
$\mathbb{P}^{N_{m}-c_m-1}=\mathbb{P}(U_m)$ for $U_m\in G(N_{m}-c_m,V_{N_{m}})$. We call $X_m$ a {\it linear section of} $\underset{i\in\Xi}\times X_m^i$ {\it of codimension} $c_m.$

\begin{proposition}\label{proposition6.2}
For a given $m\ge 1$ such that $k_{im}\ge2$ for all $i\in\Xi,$
fix an integer $c_m$ satisfying {\rm (\ref{equation56})}.
Then for a projective subspace $\mathbb{P}^{N_{m}-c_m-1}=\mathbb{P}(U_m)$
of general position in
$\mathbb{P}^{N_{m}-1},$ $U_m\in G(N_{m}-c_m,V_{N_{m}})$, and any $i\in\Xi$
the following statements hold:

(i) the varieties $X_m$ and $q^{-1}_{im}(X_m)$ are smooth;

(ii) ${\rm codim}_{\underset{i\in\Xi}\times X_m^i} X_m=
{\rm codim}_{\Sigma_{im}\times\widehat{X}_m^i}q^{-1}_{im}(X_m)=c_m$,
$\pi_{im*} \mathcal{O}_{q^{-1}_{im}(X_m)}=\mathcal{O}_{{X_m^i}^+\times\widehat{X}_m^i}$,
and, for a point
$x=(x_1,x_2)\in{X_m^i}^+\times\widehat{X}_m^i$ in general position,
the projective subspace
$\mathbb{P}^{k_{im}}=q_{im} p_{im}^{-1}(x)$ of $X_m^i\times\{x_2\}$
satisfies the condition
$$
{\rm codim}_{\mathbb{P}^{k_{im}}}(\mathbb{P}^{k_{im}}\cap \mathbb{P}^{N_{m}-c_m-1})=c_m,
$$
so that $Z_m^i(U_m):=\{x\in{X_m^i}^+\times\widehat{X}_m^i\ |\ \dim\pi_{im}^{-1}(x)> k_{im}-c_m\}$ is a proper closed
subset of ${X_m^i}^+\times\widehat{X}_m^i$;

(iii) let $B_m^i(U_m):=\pi_{im}^{-1}(Z_m^i(U_m));$
then
${\rm codim}_{q^{-1}_{im}(X_m)} B_m^i(U_m)\ge 2$,
${\rm codim}_{{X_m^i}^+\times\widehat{X}_m^i}Z_m^i(U_m)\ge 3$;

(iv) the projection
$\pi_{im}:q^{-1}_{im}(X_m)\setminus B_m^i(U_m)\to
{X_m^i}^+\times\widehat{X}_m^i\setminus Z_m^i(U_m)$
is a projective $\mathbb{P}^{k_{im}-c_m}$-bundle.
\end{proposition}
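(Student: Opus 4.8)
The plan is to mimic the proof of Proposition \ref{proposition5.1} almost word for word, carrying out every step \emph{relatively over the factor} $\widehat{X}_m^i$; the point is that this extra factor will be inert in all the dimension estimates. For (i) and the codimension assertions of (ii) I would argue as follows. The Segre--Pl\"ucker image of $\underset{i\in\Xi}\times X_m^i$ is a smooth projective variety in $\mathbb{P}^{N_{m}-1}$ of dimension far exceeding $c_m$, so by the Bertini Theorem a general linear section $X_m$ of codimension $c_m$ is smooth with $\codim_{\underset{i\in\Xi}\times X_m^i}X_m=c_m$. The projection $q_{im}\colon\Sigma_{im}\times\widehat{X}_m^i\to X_m^i\times\widehat{X}_m^i$ is equivariant for the product of classical groups acting transitively on $X_m^i\times\widehat{X}_m^i$, hence a Zariski-locally trivial fibration with smooth equidimensional fibres; in particular it is a smooth, and so flat, morphism. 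Consequently $q^{-1}_{im}(X_m)$ is smooth and $\codim_{\Sigma_{im}\times\widehat{X}_m^i}q^{-1}_{im}(X_m)=c_m$, which yields (i) and the first equality in (ii).

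To obtain $\pi_{im*}\mathcal{O}_{q^{-1}_{im}(X_m)}=\mathcal{O}_{{X_m^i}^+\times\widehat{X}_m^i}$, I would repeat the Koszul--base-change argument of Proposition \ref{proposition5.1}(i). Write $\mathcal{O}(1)$ for the Segre--Pl\"ucker polarization $\boxtimes_{j\in\Xi}\mathcal{O}_{X_m^j}(1)$; then $X_m$ is the zero locus of a regular sequence of $c_m$ sections of $\mathcal{O}(1)$, giving a Koszul resolution of $\mathcal{O}_{X_m}$ whose pullback under $q_{im}$ resolves $\mathcal{O}_{q^{-1}_{im}(X_m)}$, as a sheaf on $\Sigma_{im}\times\widehat{X}_m^i$, by the sheaves $q_{im}^*\big(\left(c_m\atop j\right)\mathcal{O}(-j)\big)$, $1\le j\le c_m$. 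A fibre of $p_{im}$ over $(V_{k_{im}+1},\hat x)$ is $\mathbb{P}(V^*_{k_{im}+1})\times\{\hat x\}$, which $q_{im}$ maps isomorphically and linearly onto the maximal projective subspace $\mathbb{P}^{k_{im}}=\mathbb{P}(V^*_{k_{im}+1})$ of $X_m^i$ (see \cite[Section 2]{PT3}), the remaining factors staying constant; hence $q_{im}^*\mathcal{O}(-j)$ restricts on that fibre to $\mathcal{O}_{\mathbb{P}^{k_{im}}}(-j)$, whose cohomology vanishes since $1\le j\le c_m< k_{im}$. By the Base-change Theorem \cite[Ch. III, Theorem 12.11]{H1}, $R^{\bullet}p_{im*}$ annihilates every term of the pulled-back resolution and sends $\mathcal{O}_{\Sigma_{im}\times\widehat{X}_m^i}$ to $\mathcal{O}_{{X_m^i}^+\times\widehat{X}_m^i}$, which gives the desired identity. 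The same computation identifies the scheme-theoretic fibre $\pi_{im}^{-1}(x)$ over $x=(V_{k_{im}+1},\hat x)$ with the linear section $\mathbb{P}^{k_{im}}\cap\mathbb{P}^{N_{m}-c_m-1}$ of the linear space $q_{im}p_{im}^{-1}(x)$ lying in $X_m^i\times\{\hat x\}$; thus this fibre has dimension $\ge k_{im}-c_m$, with equality off the closed subset $Z_m^i(U_m)$, and for general $(V_{k_{im}+1},\hat x)$ the intersection has codimension exactly $c_m$. This proves (ii) apart from the properness of $Z_m^i(U_m)$, which follows from (iii).

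Part (iii) is where the real work lies, though formally it is identical to Proposition \ref{proposition5.1}(ii): one replaces the variety $\tilde G$ there by ${X_m^i}^+\times\widehat{X}_m^i$ and the Pl\"ucker map $V_{k+1}\mapsto\mathbb{P}(V^*_{k+1})$ by the injective morphism $x=(V_{k_{im}+1},\hat x)\mapsto L(x)\in G(k_{im}+1,V_{N_{m}})$ taking $x$ to the $(k_{im}+1)$-dimensional span of $q_{im}p_{im}^{-1}(x)\subset\mathbb{P}^{N_{m}-1}$. For $0\le j\le c_m-1$ set $Z_j(U_m):=\{x\mid\dim(L(x)\cap U_m)\ge k_{im}+2-c_m+j\}$, so $Z_0(U_m)=Z_m^i(U_m)$ and one gets a filtration $\emptyset=Z_{c_m}(U_m)\subset\cdots\subset Z_0(U_m)$ as in \refeq{equation38}. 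Realizing $Z_j(U_m)$ as the image under the second projection, over a general $W=U_m$, of the incidence variety $\{(W,x)\in G(N_{m}-c_m,V_{N_{m}})\times({X_m^i}^+\times\widehat{X}_m^i)\mid\dim(W\cap L(x))\ge k_{im}+2-c_m+j\}$, and using that for fixed $L(x)$ the Schubert locus $\{W\mid\dim(W\cap L(x))\ge k_{im}+2-c_m+j\}$ has the expected codimension $(k_{im}+2-c_m+j)(j+1)$ in $G(N_{m}-c_m,V_{N_{m}})$, one obtains for general $U_m$ the bound $\codim_{{X_m^i}^+\times\widehat{X}_m^i}Z_j(U_m)\ge(j+1)(k_{im}+j+2-c_m)$ --- the formula \refeq{equation42}, since $\widehat{X}_m^i$ occurs identically in the incidence variety and in the base. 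Over the stratum $Z_j(U_m)\setminus Z_{j+1}(U_m)$ the restriction of $\pi_{im}$ is a $\mathbb{P}^{k_{im}+1-c_m+j}$-bundle, so $\codim_{q^{-1}_{im}(X_m)}\pi_{im}^{-1}(Z_j(U_m)\setminus Z_{j+1}(U_m))=\codim_{{X_m^i}^+\times\widehat{X}_m^i}Z_j(U_m)-(j+1)$, exactly as in \refeq{equation39}; taking the minimum over $j$ (attained at $j=0$) and using $c_m\le(k_{im}-1)/2\le k_{im}-1$ gives $\codim_{q^{-1}_{im}(X_m)}B_m^i(U_m)\ge k_{im}+1-c_m\ge2$ and $\codim_{{X_m^i}^+\times\widehat{X}_m^i}Z_m^i(U_m)\ge k_{im}+2-c_m\ge3$, and in particular $Z_m^i(U_m)$ is proper.

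For (iv): outside $Z_m^i(U_m)$ every fibre of $\pi_{im}$ is a projective space $\mathbb{P}^{k_{im}-c_m}$, cut out in the $\mathbb{P}^{k_{im}}$-fibre of the projective bundle $p_{im}\colon\Sigma_{im}\times\widehat{X}_m^i\to{X_m^i}^+\times\widehat{X}_m^i$ by the $c_m$ relative linear forms furnished by the pulled-back equations of $X_m$. These forms amount to a morphism of vector bundles on ${X_m^i}^+\times\widehat{X}_m^i$ whose degeneracy locus is precisely $Z_m^i(U_m)$; over the complement it has constant maximal rank, its kernel is a subbundle of rank $k_{im}+1-c_m$, and $q^{-1}_{im}(X_m)\setminus B_m^i(U_m)$ is the associated projectivized bundle, i.e. a projective $\mathbb{P}^{k_{im}-c_m}$-bundle over ${X_m^i}^+\times\widehat{X}_m^i\setminus Z_m^i(U_m)$, as asserted. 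The only genuinely new point compared with Proposition \ref{proposition5.1} --- and the only place requiring any care --- is that, since $\Xi$ is finite and only finitely many incidence varieties intervene, all the general-position requirements above can be met for one and the same general subspace $U_m$.
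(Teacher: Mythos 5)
Your proposal is correct and coincides with the paper's intended argument: the paper's own proof of Proposition \ref{proposition6.2} is literally ``Similar to the proof of Proposition \ref{proposition5.1}'', and you carry out exactly that relativization over the inert factor $\widehat{X}_m^i$ (Bertini and flat smooth pullback for (i), the Koszul/base-change computation for (ii), the incidence--Schubert dimension count reproducing \refeq{equation38}--\refeq{equation43} for (iii), and the degeneracy-locus description for (iv)). The numerology matches the paper's ((j+1)(k_{im}+j+2-c_m)) and the general-position remarks are sound, so no gaps.
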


\begin{proof}
Similar to the proof of Proposition \ref{proposition5.1}.
\end{proof}

\begin{corollary}\label{corollary6.3}
Under the assumptions of Proposition {\rm \ref{proposition6.2}},
let $i\in\Xi$ and let $\mathcal{E}$ be a vector bundle on $q_{im}^{-1}(X_m)$
trivial along the fibres of the morphism
$\pi_{im}:q_{im}^{-1}(X_m)\to{X_m^i}^+\times\widehat{X}_m^i$.
Then the sheaf $\pi_{im*}\mathcal{E}$ is locally free and
$ev:\pi_{im}^*\pi_{im*}\mathcal{E}\overset{\simeq}\to \mathcal{E}$
is an isomorphism.
\end{corollary}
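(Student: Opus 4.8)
The plan is to deduce this exactly as \refcor{corollary5.2} was deduced in the non-product case, namely by quoting \refprop{proposition7.2} from the appendix. The only structural difference is that the base of the relevant fibration is now the product ${X_m^i}^+\times\widehat{X}_m^i$ rather than a single isotropic grassmannian, so that \refprop{proposition6.2} takes over the role that \refprop{proposition5.1} played there. Thus the whole argument will be a matter of checking, hypothesis by hypothesis, that \refprop{proposition7.2} applies.

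Concretely, I would feed into \refprop{proposition7.2} the morphism $f:=\pi_{im}\colon q_{im}^{-1}(X_m)\to{X_m^i}^+\times\widehat{X}_m^i$ of \refeq{equation59}, the closed subset $Z:=Z_m^i(U_m)$, and the bundle $\mathcal{E}$. The required inputs are supplied term by term by \refprop{proposition6.2}: part (i) gives smoothness of the total space $q_{im}^{-1}(X_m)$ (and of the base); part (ii) gives that $\pi_{im}$ is a projective surjective morphism and that $Z_m^i(U_m)$ is a proper closed subset; part (iii) gives the two codimension bounds $\codim_{q_{im}^{-1}(X_m)}\pi_{im}^{-1}(Z_m^i(U_m))\ge 2$ and $\codim_{{X_m^i}^+\times\widehat{X}_m^i}Z_m^i(U_m)\ge 3$; and part (iv) gives that $\pi_{im}$ is a genuine projective $\mathbb{P}^{k_{im}-c_m}$-bundle over the complement of $Z_m^i(U_m)$. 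The remaining hypothesis, triviality of $\mathcal{E}$ along the fibres of $\pi_{im}$, is precisely the assumption of the corollary. \refprop{proposition7.2} then yields at once that $\pi_{im*}\mathcal{E}$ is locally free and that $ev\colon\pi_{im}^*\pi_{im*}\mathcal{E}\to\mathcal{E}$ is an isomorphism, which is the assertion.

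Since every input has already been established, I do not expect a genuine obstacle; the argument is essentially formal. The one point that deserves a sentence of care is the interplay of the two codimension estimates in \refprop{proposition6.2}(iii): over ${X_m^i}^+\times\widehat{X}_m^i\setminus Z_m^i(U_m)$ both the local freeness of the pushforward and the evaluation isomorphism are standard for a projective bundle along whose fibres $\mathcal{E}$ is trivial, and the bounds $\codim\ge 2$ and $\codim\ge 3$ are exactly what \refprop{proposition7.2} uses to extend these statements across the exceptional locus $\pi_{im}^{-1}(Z_m^i(U_m))$ by a Hartogs/reflexivity argument on the smooth total space. Hence the proof reduces entirely to invoking \refprop{proposition7.2} together with \refprop{proposition6.2}, and the phrase ``similar to the proof of \refcor{corollary5.2}'' is essentially all that is needed.
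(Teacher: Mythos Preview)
Your proposal is correct and follows exactly the paper's own approach: the proof in the paper simply records that, by \refprop{proposition6.2}, the data $X={X_m^i}^+\times\widehat{X}_m^i$, $Y=q_{im}^{-1}(X_m)$, $B=B_m^i(U_m)$, $Z=Z_m^i(U_m)$, $E=\mathcal{E}$ satisfy the hypotheses of \refprop{proposition7.2}, and then invokes that proposition. Your hypothesis-by-hypothesis verification is precisely the content of that one sentence.
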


\begin{proof}
Proposition \ref{proposition6.2} implies that the data
$X={X_m^i}^+\times\widehat{X}_m^i$, $Y=q_{im}^{-1}(X_m)$,
$B=B_m^i(U_m)$, $Z=Z_m^i(U_m)$, $E=\mathcal{E}$
satisfy the conditions of Proposition 7.2 from the appendix.
Therefore this latter proposition yields the corollary.
\end{proof}

Below we will need the following lemma, the proof of which is similar to that of Lemma~\ref{lemma5.3}.

\begin{lemma}\label{lemma6.4}
For $i=1,2,$ let $X_i=G(k_i,V_i),$ $GO(k_i,V_i),$ $GS(k_i,V_i)$ and let $X$ be a linear section of
codimension $c$ of $X_1\times X_2$, where $1\le c\le {\rm min}\{\frac{k_1+1}{2},\frac{k_2+1}{2}\}$. Then
for any projective line $\mathbb{P}^1_i\subset X_i$ there exists a projective line $\mathbb{P}^1$ such that $pr_i|_{\mathbb{P}^1}$ is an isomorphism of $\mathbb{P}^1$ and $\mathbb{P}^1_i$. Here $pr_i$ stands for the natural projection $X_1\times X_2\to X_i.$
\end{lemma}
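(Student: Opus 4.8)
The plan is to imitate the argument of Lemma~\ref{lemma5.3} in the two-factor setting. Fix, say, $i=1$ and a projective line $\mathbb{P}^1_1\subset X_1$. The line $\mathbb{P}^1_1$ is the pencil $\{V_{k_1}\ |\ W_{k_1-1}\subset V_{k_1}\subset W_{k_1+1}\}$ for a unique (isotropic, in the orthogonal/symplectic cases) flag $W_{k_1-1}\subset W_{k_1+1}$ in $V_1$. The maximal projective space of $X_1$ containing $\mathbb{P}^1_1$ and fixing $W_{k_1+1}$ is $\mathbb{P}(W_{k_1+1}^*)\simeq\mathbb{P}^{k_1}$; fix also any point $x_2=V_{k_2}\in X_2$ and the analogous maximal space $\mathbb{P}^{k_2}\subset X_2$ through it. Inside $X_1\times X_2\subset\mathbb{P}(V_{N_1})\times\mathbb{P}(V_{N_2})\subset\mathbb{P}(V_{N_1}\otimes V_{N_2})$ (Segre), the product $\mathbb{P}^{k_1}\times\mathbb{P}^{k_2}$ is embedded as a Segre variety. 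One then restricts attention to the family of lines of $X_1\times X_2$ lying in this Segre variety and projecting isomorphically onto $\mathbb{P}^1_1$: these are exactly the graphs of linear isomorphisms $\mathbb{P}^1_1\overset{\sim}\to\ell$ for lines $\ell\subset\mathbb{P}^{k_2}$, together with the constant sections $\mathbb{P}^1_1\times\{pt\}$.

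The key point is a dimension count showing that among all these lines one can be chosen landing inside the linear section $X=(X_1\times X_2)\cap\mathbb{P}(U)$ of codimension $c$. First I would pick a point $y\in\mathbb{P}^{k_1}$ with $y\notin\mathbb{P}(U)$ (if no such point exists, $\mathbb{P}^{k_1}\subset X$ and we are done by taking a constant section $\mathbb{P}^1_1\times\{pt\}$ with the $pt$ chosen so the whole copy $\mathbb{P}^1_1\times\{pt\}$ lies in $X$, which it does as $\mathbb{P}^1_1\subset X_1$ and $\{pt\}$ ranges over $\mathbb{P}^{k_2}\cap\mathbb{P}(U)$ — nonempty since $c\le(k_2+1)/2\le k_2$). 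Projecting $\mathbb{P}^{k_1}\times\mathbb{P}^{k_2}$ from points not on $U$ as in Lemma~\ref{lemma5.3}, the fibres $\mathbb{P}^{k_1}_a$, $\mathbb{P}^{k_1}_b$ over two points $a,b\in\mathbb{P}^{k_2}$ are identified by a linear isomorphism $f$, and $\mathbb{P}^{k_1}_a\cap\mathbb{P}(U)$, $\mathbb{P}^{k_1}_b\cap\mathbb{P}(U)$ each have dimension $\ge k_1-c$. The hypothesis $c\le\min\{(k_1+1)/2,(k_2+1)/2\}$ gives $(k_1-c)+(k_1-c)\ge k_1-1$, so $\mathbb{P}^{k_1}_b\cap\mathbb{P}(U)$ and $f(\mathbb{P}^{k_1}_a\cap\mathbb{P}(U))$ meet inside $\mathbb{P}^{k_1}_b$; actually one wants the graph of $f$ restricted to a line, so more precisely, after intersecting with $\mathbb{P}(U)$ the section varies in a family of the right dimension, and the incidence condition forces a nonempty intersection giving the desired $\mathbb{P}^1$. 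Then $pr_1|_{\mathbb{P}^1}$ is an isomorphism onto $\mathbb{P}^1_1$ by construction.

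The main obstacle I anticipate is organizing the dimension count cleanly so that the numerical hypothesis $1\le c\le\min\{(k_1+1)/2,(k_2+1)/2\}$ is exactly what is used — the bound here is $(k_i+1)/2$ rather than the $(k-1)/2$ of Lemma~\ref{lemma5.3} because only one factor's pencil needs to be matched isomorphically while the other factor contributes a free choice of point, so the relevant inequality is the existence of a point in $\mathbb{P}^{k_2}\cap\mathbb{P}(U)$ (needing $c\le k_2$) together with the transversality inside the Segre fibre (needing the sum-of-dimensions bound). One should double-check that in the isotropic cases the maximal projective spaces and the flag description used above are the ones provided by \cite[Section 2]{PT3}, exactly as in the finite-dimensional setup of subsection~\ref{5.1}. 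Once the curve $C$ with $pr_1|_C$ an isomorphism is produced, the statement follows, and the symmetric statement for $i=2$ is obtained by exchanging the roles of the two factors. The remaining verifications are routine and parallel to Lemma~\ref{lemma5.3}, so I would simply say so rather than repeat them.
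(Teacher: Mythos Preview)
Your overall plan---adapt Lemma~\ref{lemma5.3} by working inside a Segre-embedded product of maximal projective spaces and running a dimension count---is exactly what the paper intends. However, the execution contains a genuine error in the setup that makes the argument produce the wrong kind of line.

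The lines in $X_1\times X_2$ (Segre-embedded) that project isomorphically onto $\mathbb{P}^1_1\subset X_1$ via $pr_1$ are \emph{only} the constant sections $\mathbb{P}^1_1\times\{pt\}$ with $pt\in X_2$. Graphs of isomorphisms $\mathbb{P}^1_1\overset{\sim}\to\ell$ are curves of bidegree $(1,1)$, hence conics under Segre, not lines; they must be discarded. Consequently the task is simply to find $pt\in X_2$ with $\mathbb{P}^1_1\times\{pt\}\subset\mathbb{P}(U)$.

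For this you should take the two reference points $a,b$ on $\mathbb{P}^1_1$ (not on $\mathbb{P}^{k_2}$) and look at the fibres $\{a\}\times\mathbb{P}^{k_2}$ and $\{b\}\times\mathbb{P}^{k_2}$, which are the $\mathbb{P}^{k_2}$'s. You have the roles reversed: your fibres $\mathbb{P}^{k_1}_a,\mathbb{P}^{k_1}_b$ over $a,b\in\mathbb{P}^{k_2}$ would, after the intersection argument, yield a point $x\in\mathbb{P}^{k_1}$ with $\{x\}\times\mathrm{Span}(a,b)\subset X$---a line on which $pr_1$ is \emph{constant}, not an isomorphism. The maximal space $\mathbb{P}^{k_1}\subset X_1$ through $\mathbb{P}^1_1$ is in fact irrelevant; only a maximal $\mathbb{P}^{k_2}\subset X_2$ enters. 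Also, no projection from a vertex is needed: unlike the cone $q(p^{-1}(\mathbb{P}^1))$ in Lemma~\ref{lemma5.3}, the variety $\mathbb{P}^1_1\times\mathbb{P}^{k_2}$ is already a product, so the ambient space for the intersection is $\mathbb{P}^{k_2}$, not $\mathbb{P}^{k_2-1}$. With the corrected setup the two linear subspaces $(\{a\}\times\mathbb{P}^{k_2})\cap\mathbb{P}(U)$ and $(\{b\}\times\mathbb{P}^{k_2})\cap\mathbb{P}(U)$, each of dimension $\ge k_2-c$, meet in $\mathbb{P}^{k_2}$ whenever $2(k_2-c)\ge k_2$, and any point in the intersection gives the desired line $\mathbb{P}^1_1\times\{pt\}$.
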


Consider an ind-variety $\mathbf{X}=\underset{\to}\lim X_m$ such that, for each
$m\ge1$, $X_m$
is a smooth linear of codimension
$c_m$ of $\underset{i\in\Xi}\times X_{im}$ where $c_m$ satisfies (\ref{equation56}),
and the embeddings
$\phi_m:X_m \hookrightarrow X_{m+1}$
are induced by the corresponding embeddings
$\underset{i\in\Xi}\times X_m^i\hookrightarrow\underset{i\in\Xi}\times X_{m+1}^i$.
In what follows we call $\mathbf{X}$ a {\it linear section of } $\underset{i\in\Xi}\times \mathbf{X}^i$
{\it of small codimension.}
The existence of linear sections ${\mathbf{X}}$ of $\underset{i\in\Xi}\times \mathbf{X}^i$ of small codimension follows immediately from the Bertini Theorem.

\begin{theorem}\label{theorem6.5}
Let $\mathbf{X}$ be a linear section of $\underset{i\in\Xi}\times \mathbf{X}^i$ of small codimension. Then a vector bundle on $\mathbf{X}$ is a direct sum of line bundles.
\end{theorem}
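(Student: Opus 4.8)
The plan is to verify that a linear section $\mathbf{X}$ of small codimension of $\underset{i\in\Xi}\times\mathbf{X}^i$ satisfies the properties L, A and T, and then to invoke Theorem~\ref{E on linear X}. First I would check property L: since the Picard group of each finite-dimensional linear section $X_m$ is generated by $\mathcal{O}_{X_m}(1)$ by the Lefschetz hyperplane theorem (valid because $c_m$ is small relative to the dimensions, which grow), and the embeddings $\phi_m$ are restrictions of standard, hence linear, extensions, $\Pic\mathbf{X}$ is freely generated by $\mathcal{O}_{\mathbf{X}}(1)$; the vanishing $H^1(X_m,\mathcal{O}_{X_m}(a))=0$ for $a<0$ follows from Kodaira vanishing since $X_m$ is smooth projective and $\mathcal{O}_{X_m}(1)$ is ample. (Here the single family $\mathbf{B}$ of projective lines on $\mathbf{X}$ is the one associated with $\mathcal{O}_{\mathbf{X}}(1)$, so $\Theta_{\mathbf{X}}$ is a singleton.)

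Next I would establish property T, which is the substantive point and the place where Proposition~\ref{proposition6.2}, Corollary~\ref{corollary6.3} and Lemma~\ref{lemma6.4} do their work. Given a linearly trivial bundle $E$ on $X_m$, I would argue it is trivial by descent along the diagram \eqref{equation58}--\eqref{equation59}: for each $i\in\Xi$ pull $E$ back to $\mathcal{E}:=q_{im}^*E$ on $q_{im}^{-1}(X_m)$; since $E$ is linearly trivial, $\mathcal{E}$ is trivial on each fibre $\pi_{im}^{-1}(x)\cong\mathbb{P}^{k_{im}-c_m}$ of $\pi_{im}$ (using that a linearly trivial bundle on projective space is trivial, \cite[Ch.~I, Thm.~3.2.1]{OSS}), so Corollary~\ref{corollary6.3} gives $\mathcal{E}\cong\pi_{im}^*\pi_{im*}\mathcal{E}$. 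By Lemma~\ref{lemma6.4} every projective line in the $i$-th factor of ${X_m^i}^+\times\widehat{X}_m^i$ lifts to a line in $q_{im}^{-1}(X_m)$ on which $\mathcal{E}$ is trivial, so $\pi_{im*}\mathcal{E}$ is linearly trivial along those lines; pushing forward further (first projecting to ${X_m^i}^+$, whose fibres over $\widehat{X}_m^i$ are isotropic grassmannians of growing dimension, and then descending to $X_m^i$ via the flag projection $\Sigma_{im}\to X_m^i$ using Proposition~\ref{trivial on fibres}) one shows $E$ is trivial in the $i$-th direction. Running this over all $i\in\Xi$ — and passing to the limit, where the relevant dimensions $k_{im}-c_m$ and the fibre dimensions go to infinity — forces $E$ itself to be trivial, which is property T for $\mathbf{X}$. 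This part of the argument mirrors Proposition~\ref{proposition5.4}, but with the extra bookkeeping of the $l$ distinct factors.

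Finally I would verify property A. Part (A.i) is immediate since $\mathcal{O}_{X_m}(1)$ is very ample, so the associated morphism $\psi_m$ is a closed embedding and carries the family of lines through a point isomorphically onto a subfamily of lines in projective space. For (A.ii)--(A.iv) I would describe, exactly as in the proof of Theorem~\ref{theorem4.4} and in the linear-section discussion preceding Theorem~\ref{theorem5.6}, the base $B_m$ of the family of lines on $X_m$ and the ind-variety $\mathbf{\Pi}$ of projective ind-subspaces of $\mathbf{B}$: for each $i\in\Xi$ one uses the flag-variety description of lines on the grassmannian factor $X_m^i$ (from \cite[Lemmas 2.2, 2.5]{PT3}), intersects with the linear section, and applies the dimension/codimension estimates of Proposition~\ref{proposition6.2}(iii) together with the filtration \eqref{equation38}--\eqref{equation42} to see that the relevant incidence varieties are irreducible and that $B_m(x)$, $\Pi_m(x)$ fibre over an irreducible quadric (or projective space) $Q_m(x)$ whose dimension tends to infinity; the fibres of these projections are grassmannians (or points) of growing dimension. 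Connectedness of $\Pi_m(x)$ gives (A.ii), the fact that the ind-subspaces fill $\mathbf{B}(x)$ gives (A.iii), and the observation that any morphism from $\Pi_m(x)$ to a fixed finite-dimensional variety must be constant for large $m$ — because it factors through the constant-by-dimension fibres and then through the quadric $Q_m(x)$, which admits no nonconstant map to a fixed variety once its dimension is large enough — gives (A.iv). Having checked L, A, T, Theorem~\ref{E on linear X} yields that every vector bundle on $\mathbf{X}$ is a direct sum of line bundles.

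I expect the main obstacle to be property T, specifically the multi-factor descent: one must run the fibrewise-triviality-plus-pushforward argument successively in each of the $l$ coordinate directions and check that triviality gained in earlier directions is preserved when working in the next one, and that the bound \eqref{equation56} on $c_m$ is exactly what makes Lemma~\ref{lemma6.4} applicable in every factor simultaneously. The verification of A, while lengthy, is essentially a bookkeeping adaptation of the single-grassmannian case already carried out in the proofs of Theorems~\ref{theorem4.4} and \ref{theorem5.6}.
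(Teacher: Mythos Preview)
Your proposal contains a genuine error in the identification of $\Pic\mathbf{X}$. A linear section $X_m$ of small codimension of the product $\underset{i\in\Xi}\times X_m^i$ has, by the Lefschetz theorem, Picard group isomorphic to that of the product, namely $\mathbb{Z}^{|\Xi|}$, freely generated by the restrictions $\mathbf{L}_i|_{X_m}$ of the pullbacks from the individual factors. The Segre line bundle $\mathcal{O}_{X_m}(1)$ is only the tensor product $\otimes_i\mathbf{L}_i|_{X_m}$ and does \emph{not} generate $\Pic X_m$ when $|\Xi|\ge2$. Consequently $\Theta_{\mathbf{X}}$ is not a singleton: one must take $\Theta_{\mathbf{X}}=\Xi$, with $|\Xi|$ distinct line bundles $\mathbf{L}_i|_{\mathbf{X}}$ and $|\Xi|$ corresponding families $\mathbf{B}_i$ of projective lines, the lines in $\mathbf{B}_i$ being those lying in slices $X_m^i\times\{y\}$, $y\in\widehat{X}_m^i$. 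This also affects the vanishing part of property L: one needs $H^1(X_m,\otimes_i\mathbf{L}_i^{\otimes a_i}|_{X_m})=0$ whenever some $a_i<0$, and Kodaira vanishing for a single ample line bundle does not give this; the paper obtains it by combining K\"unneth and Bott on the ambient product with the Koszul resolution of $\mathcal{O}_{X_m}$.

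Your treatment of property A is similarly off target because it is organized around a single family $\mathbf{B}$; one must instead verify (A.i)--(A.iv) separately for each $\mathbf{B}_i$, constructing $\mathbf{\Pi}_i$ from the flag variety $\Sigma_{im}$ in the $i$-th factor exactly as in subsection~5.2. Your sketch of property T is closer in spirit to the paper's argument, but the descent is not ``in each direction $i$ separately'' as you phrase it: the paper (for $|\Xi|=2$) pushes $q_{1m}^*E_m$ down along $\pi_{1m}$ and then along the projection $\lambda:{X_m^1}^+\times X_m^2\to X_m^2$ to obtain a bundle $E_m^\lambda$ on the single grassmannian $X_m^2$, shows via Lemma~\ref{lemma6.4} that $E_m^\lambda$ is linearly trivial on $X_m^2$, and then invokes Proposition~\ref{proposition7.4} to conclude triviality. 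The inductive step for general $|\Xi|$ reduces to $|\Xi|-1$ factors in the same way.
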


\begin{proof}
We give a proof for the case when all $\mathbf{X}^i$ are isomorphic to
$\mathbf{G}\mathrm{O}(\infty,\infty)$.
The case when some $\mathbf{X}^i$ are isomorphic to $\mathbf{G}(\infty)$ or $\mathbf{G}\mathrm{S}(\infty,\infty)$
is treated similarly.

First we construct families $\mathbf{B}_i$, $i\in\Xi$, on $\mathbf{X}$. For each $m\ge1$ and each $i$,
$1\le i\le l$, consider the natural projection
$p_{im}:X_m\to \hat{X}_m^i$,
and for an arbitrary point
$y\in \hat{X}_m^i$ set $X_m^i(y):=p^{-1}_{im}(y)$.
By definition, $X_m^i(y)$ is a linear section of the grassmannian
$GO(k_{im},V_{n_{im}})$. Let $B_i(m)(y)$ be the family of projective lines in
$GO(k_{im},V_{n_{im}})$ lying on $X_m^i(y)$,
and set
$B_i(m):=\underset{y\in \hat{X}_m^i}\cup B_i(m)(y)$.
Then
$\mathbf{B}_i:=\underset{\to}\lim B_i(m)$, $i\in\Xi$.
Furthermore, the ind-varieties
$\mathbf{\Pi}_i$, $i\in\Xi,$
parametrizing certain families of ind-projective spaces
$\mathbb{P}^\infty$ in $\mathbf{B}_i$, are
defined in the same way as the ind-variety $\mathbf{\Pi}$
in subsection 5.2 -- see (\ref{equation48}).

Next, recall the collection of line bundles (\ref{equation55}) on $\underset{i\in\Xi}\times\mathbf{X}^i$.
In our case $\Theta_{\mathbf{X}^{\xi}}$ consists of a single point for each $i$, hence we can write simply
$\mathbf{L}=\{\mathbf{L}_i\}_{i\in\Xi}$. We now define a family of line bundles
$\mathbf{L}_{\mathbf{X}}$ by putting
$\mathbf{L}_{\mathbf{X}}:=\{\mathbf{L}_i|_{\mathbf{X}}\}_{i\in\Xi}$.
Then by the Lefschetz Theorem $\mathbf{L}_{\mathbf{X}}$ freely generates $\mathrm{Pic}\mathbf{X}$;
in addition, the relation (\ref{Li|Pj}) is clearly satisfied.
To see that $\mathbf{X}$ satisfies the property L, it remains to notice that
$H^1(X_m,\underset{i\in\Xi}\otimes\mathbf{L}_i^{\otimes a_i}|_{X_m})=0$ for all $a_i$.
Indeed, the vanishing of
$H^1(\underset{i\in\Xi}\times X_m^i,
\underset{i\in\Xi}\otimes\mathbf{L}_i^{\otimes a_i}|_{\underset{i\in\Xi}\times X_m^i})$
follows from Kunneth's and Bott's formulas. Since
$\underset{i\in\Xi}\otimes\mathbf{L}_i^{\otimes a_i}|_{X_m}$ admits a Koszul resolution similar to (\ref{equation32}),
this is sufficient to conclude that
$H^1(X_m,\underset{i\in\Xi}\otimes\mathbf{L}_i^{\otimes a_i}|_{X_m})=0$.

Using Proposition \ref{proposition6.2} and
repeating the argument from subsection 5.2, it is easy to check that $\mathbf{X}$ satisfies the property A.
Let us show that $\mathbf{X}$ satisfies the property T.
The case $|\Xi|=1$ was treated in Proposition 4.4(ii). It is enough to give the proof for the case
$|\Xi|=2$;  the proof for $|\Xi|\ge 3$ goes along the same lines. We thus assume that $X_m$ is a linear section
of $X_m^1 \times X_m^2$. According to (\ref{equation58}) and (\ref{equation59}) we have a commutative diagram
\begin{equation}\label{diag1}
\xymatrix{
q^{-1}_{im}(X_m)\ar[d]_{\pi_{1m}}\ar[r]^-{q_{1m}}&
X_m\ar[d]^{\rho}\\
{X_m^1}^+\times X_m^2\ar[r]_-{\lambda} & X_m^2, }
\end{equation}
where $l=2$, $i=1$, so that $\hat{X}_m^1=X_m^2$, and $\lambda$ and $\rho$
are the natural projections.

Let $\underset{\leftarrow}\lim E_m$ be a $\mathbf{B}_i$-trivial vector bundle on
$\mathbf{X}=\underset{\to}\lim X_m$ for $i=1,2$.
This means that each vector bundle $E_m$ is a $B_i(m)$-trivial bundle on $X_m$, i.e.
$E_m|_{\mathbb{P}^1}$ is trivial for any $\mathbb{P}^1\in B_i(m)$, $i=1,2$. Consider the vector bundle
$$
\tilde{E}_m:=q^*_{1m}E_m.
$$
Since $E_m$ is $B_1(m)$-trivial, i.e. linearly trivial on the fibres of $\rho$, $E_m$ is trivial by Proposition
\ref{proposition5.4}. It follows that $\tilde{E}_m$ is trivial along the fibres of $\pi_{1m}$.
Therefore, by Corollary \ref{corollary6.3} there is an isomorphism
$$
ev:\pi^*_{1m}\pi_{1m*}\tilde{E}_m\overset{\simeq}\to \tilde{E}_m.
$$
Moreover, as in the proof of Proposition \ref{proposition5.4}, we obtain that the bundle
$\pi_{1m*}\tilde{E}_m|_{\lambda^{-1}(y)}$ is trivial for any $y\in X_m^2$.

Thus Proposition 7.1 below and diagram (\ref{diag1}) imply that
$E_m^{\lambda}:=\lambda_*\pi_{1m*}\tilde{E}_m$
is a vector bundle on $X_m^2$ such that
$\tilde{E}_m\simeq\pi^*_{1m}\lambda^*E_m^{\lambda}\simeq q^*_{1m}\rho^*E_m^{\lambda}$.
Applying again Proposition 7.1 we obtain
\begin{equation}\label{equation64}
E_m\simeq q_{1m*}\tilde{E}_m\simeq q_{1m*}q^*_{1m}\rho^*E_m^{\lambda}\simeq \rho^*E_m^{\lambda}.
\end{equation}
Since $E_m$ is $B_2(m)$-trivial, it follows from (\ref{equation64}) and Lemma \ref{lemma6.4} that
$E_m^{\lambda}$ is a linearly trivial bundle on $X_m^2$. Hence, $E_m^{\lambda}$
is trivial by Proposition \ref{proposition7.4} below, and $E_m$ is trivial as well. In this we showed that
$\mathbf{X}$ satisfies the property T.

\end{proof}

\end{sub}

\vspace{1cm}

\begin{sub}{\bf Ind-varieties of generalized flags.}\label{gen flags}
\vspace{5mm}

\rm

We first recall some basic definitions concerning generalized flags in a vector space, see
\cite[sections 3-5]{DiP}. Let $V$ be a countable-dimensional vector space.
A set $\mathcal{C}$ of pairwise distinct subspaces of $V$ is called a \textit{chain} if it
is linearly ordered by inclusion. A chain $\mathcal{F}$ of subspaces of $V$ is a
\textit{generalized flag} in $V$ if it satisfies the following conditions:

(i) each $F\in\mathcal{F}$ has an immediate successor or an immediate predecessor, i.e.
$\mathcal{F}=\mathcal{F}'\cup\mathcal{F}''$, where $\mathcal{F}'\subset\mathcal{F}$
(respectively, $\mathcal{F}''\subset\mathcal{F}$) is the set of subspaces in
$\mathcal{F}$ having an immediate successor (respectively, predecessor);

(ii) $V\setminus\{0\}=\sqcup_{F'\in\mathcal{F}'}(F''\setminus F')$,
where $F''\in\mathcal{F}''$ is the immediate successor of $F'\in\mathcal{F}'$.

We define a \textit{flag} in $V$ to be a generalized flag in $V$
which is isomorphic as an ordered set to a subset of $\mathbb{Z}$.
A flag can be equivalently defined as a chain of subspaces of $V$ such that
$\cap_{F\in\mathcal{F}}F=0$, $\cup_{F\in\mathcal{F}}F=V$
and there exists a strictly monotonic map of ordered sets
$\phi:\mathcal{F}\to\mathbb{Z}$.

If $\mathcal{F}$ is a generalized flag in $V$ and $\{e_{\alpha}\}_{\alpha\in A}$ is a
basis of $V$ ($A$ being a countable set), we say that $\mathcal{F}$ and
$\{e_{\alpha}\}_{\alpha\in A}$
are \textit{compatible} if there exists a strict partial order $\prec$ on $A$ such that, for any $F'\in\mathcal{F}'$,
$F'=\mathrm{Span}\{e_{\beta}\ |\ \beta\prec\alpha\}$ for a certain $\alpha\in A$,
and
$F''=F'\oplus\mathrm{Span}\{e_{\gamma}\ |\ \gamma$
is not $\prec$-comparable to $\alpha\}$.

For the rest of this section we fix a basis $E=\{e_n\}$ of $V$. We call a generalized flag
$\mathcal{F}$ \textit{weakly compatible with} $E$ if $\mathcal{F}$
is compatible with a basis $L$ of $V$ such that $E\setminus(E\cap L)$ is a finite set. Furthermore, we define two generalized flags $\mathcal{F}$ and
$\mathcal{G}$ in $V$ to be $E$-\textit{commensurable} if both $\mathcal{F}$ and
$\mathcal{G}$ are weakly compatible with $E$ and there exists an inclusion preserving bijection $\phi:\mathcal{F}\to\mathcal{G}$ and a finite-dimensional subspace $U$ in $V$ such that, for every $F\in\mathcal{F}$,

(i) $F\subset\phi(F)+U,\ \phi(F)\subset F+U$,\ \  and

(ii) $\dim(F\cap U)=\dim(\phi(F)\cap U)$.

Let $\mathcal{F}l(\mathcal{F},E)$ be the set of all generalized flags in $V$ that are $E$-commensurable with $\mathcal{F}$. Following \cite[Prop. 5.2]{DiP}) we endow
$\mathcal{F}l(\mathcal{F},E)$ with a structure of an ind-variety in the following way. For any
$m\ge 1$ denote $E_m:=\{e_\alpha\}_{\alpha\le m}$,
$V_m:=\Span(E_m)$, $E_m^c:=\{e_\alpha\}_{\alpha>m}$, $V_m^c:=\Span(E_m^c)$.
Next, for any $\mathcal{G}\in \mathcal{F}l(\mathcal{F},E)$ choose a positive integer $m_{\mathcal{G}}$
such that
$\mathcal{F}$ and $\mathcal{G}$
are compatible with bases containing $E^c_{m_{\mathcal{G}}}$, and
$V_{m_{\mathcal{G}}}$ contains a finite-dimensional subspace
$U$ which together with the corresponding inclusion preserving bijection
$\phi_{\mathcal{G}}:\mathcal{F}\overset{\sim}\to \mathcal{G}$ makes
$\mathcal{F}$ and $\mathcal{G}$ $E$-commensurable. We can pick $n_{\mathcal{F}}$ so that
$n_{\mathcal{F}}\le m_{\mathcal{G}}$ for every $\mathcal{G}\in\mathcal{F}l(\mathcal{F},E)$. Set
$$
\mathcal{G}_m:=\{G\cap V_m\ |\ G\in \mathcal{G}\}\quad n\ge m_{\mathcal{G}}.
$$
The type of the flag $\mathcal{F}_n$ yields a sequence of integers
$$
0<d_1^m<...<d_{s_{m-1}}^m< d_{s_m}^m=m,
$$
and let $\mathcal{F}l(d_m,V_m)$ be the usual flag variety of type $d_m=(d_{1m},...,d_{m,s_{m-1}})$ in $V_m$.
Notice that
$s_{m+1}=s_m$ or $s_{m+1}=s_m+1$. Furthermore, in both cases an integer $j_m$
is determined as follows: in the former case
$d_{m+1,i}=d_{m,i}$ for $0\le i<j_m$ and $d_{m+1,i}=d_{m,i}+1$ for $j_m\le i<s_m$,
and in the latter case $d_{m+1,i}=d_{m,i}$ for $0\le i<j_m$ and $d_{m+1,i}=d_{m,i-1}+1$ for $j_m\le i<s_m$.

Now we define a map $\iota_m:\mathcal{F}l(d_m,V_m)\to \mathcal{F}l(d_{m+1},V_{m+1})$
for every $m\ge m_{\mathcal{F}}$. Given a flag
${\mathcal{G}}_m=\{0=G_0^m\subset G_1^m\subset... \subset G_{s_m}^m=V_m\}\in \mathcal{F}l(d_m,V_m)$,
put
$\iota_m(\mathcal{G}_m)=\mathcal{G}_{m+1}:=\{0=G_0^{m+1}\subset G_1^{m+1}\subset...\subset G_{s_m+1}^{m+1}
V_{m+1}\}$,
where
$$
G_i^{m+1}=\begin{cases}
G_i^m &{\rm if}\ \ 0\le i<j_m,\cr
G_i^m\oplus ke_{m+1} &{\rm if}\ j_m\le i\le s_{m+1}\ \ {\rm and}\ \ s_{m+1}=s_m,\cr
G_{i-1}^m\oplus ke_{m+1} &{\rm if}\ j_m\le i\le s_{m+1}\ \ {\rm and}\ \ s_{m+1}=s_m+1.\cr
\end{cases}
$$
The maps $\iota_m$ are closed embeddings of algebraic varieties, and hence
$\underset{\to}\lim\mathcal{F}l(d_m,V_m)$ is an ind-variety.
A bijection between $\mathcal{F}l(\mathcal{F},E)$ and $\underset{\to}\lim\mathcal{F}l(d_m,V_mmm)$ is given by
$$
\tau: \mathcal{F}l(\mathcal{F},E)\overset{\sim}\to \underset{\to}\lim\mathcal{F}l(d_m,V_m),\quad \mathcal{G}\mapsto
\underset{\to}\lim\mathcal{G}_m
$$
-- see [DiP, Prop. 5.2].

Assume now that $\mathcal{F}$ is a flag of subspaces in $V$. Then
$\mathcal{F}=\{...\subset F_i\subset F_{i+1}\subset...\}_{i\in\Theta}$,
where $\Theta$ is one of the four linearly ordered sets
$\{1,...,n\}$, $\mathbb{Z}$, $\mathbb{Z}_{>0}$, $\mathbb{Z}_{<0}$.
Assume in addition that
\begin{equation}\label{assumptn inf}
\dim(F''/F')=\infty
\end{equation}
for all $i\in\Theta$ for which
$i+1\in\Theta$. Denote by $\mathcal{\hat{F}}(i)$ the flag
$\mathcal{\hat{F}}\setminus\{F_i\}=\{...\subset F_{i-1}\subset F_{i+1}\subset...\}$.
There is natural projection
$\pi_i:\mathcal{F}l(\mathcal{F},E)\to\mathcal{F}l(\mathcal{\hat{F}}(i),E)$.
Let $\mathcal{\hat{G}}=\{...\subset G_{i-1}\subset G_{i+1}\subset...\}\in\mathcal{F}l(\mathcal{\hat{F}}(i),E)$
and $\mathcal{G}=\{...\subset G_{i-1}\subset G_i\subset G_{i+1}\subset...\}\in\pi_i^{-1}(\mathcal{\hat{G}})$.
Then the fibre $\pi_i^{-1}(\mathcal{\hat{G}})$ equals
$\mathcal{F}l(G_i/G_{i-1},E(i))$ where $E(i)=(E\cap G_{i+1})\setminus(E\cap G_{i-1})$.
Note that the ind-variety $\mathcal{F}l(G_i/G_{i-1},E(i))$ is isomorphic to the ind-grassmannian
$\mathbf{G}({\infty})$.

Moreover, there is a well-defined line bundle
$\mathbf{L}_i:=\mathcal{O}_{\pi_i}(1):=\underset{\leftarrow}\lim\mathcal{O}_{\pi_{im}}(1)$
on $\mathcal{F}l(\mathcal{F},E)$,
where
$\pi_{im}:\mathcal{F}l(d_m,V_m)\to\mathcal{F}l(\hat{d}_m(i),V_m)$
is the natural projection and $\hat{d}_m(i)$ is defined in the same way as $d_m$ using the flag
$\mathcal{\hat{F}}(i)$ instead of $\mathcal{F}$. The fact that the line bundles $\mathcal{O}_{\pi_{im}}(1)$
yield a well-defined bundle $\mathcal{O}_{\pi_i}(1)$ is established by a straightforward checking using the
explicit form of the embeddings $\iota_m$.

By $\mathbf{B}_i(\mathcal{G})$ we denote the ind-variety of projective lines on $\mathcal{F}l(\mathcal{F},E)$
passing through a point $\mathcal{G}\in\mathcal{F}l(\mathcal{F},E)$ and lying in the fibre of $\pi_i$ which
contains $\mathcal{G}$. Finally, we define the ind-variety $\mathbf{\Pi}_i(\mathcal{G})$ as the ind-variety
$\mathbf{\Pi}(\mathcal{G})$ for the ind-grassmannian $\mathcal{F}l(G_i/G_{i-1},E(i))\simeq\mathbf{G}({\infty})$
as defined in subsection \ref{section 4.3}.

It is easy to check that $\mathcal{F}l(\mathcal{F},E)$ satisfies the properties L, A and T with respect to the data
$\Theta_{\mathcal{F}l(\mathcal{F},E)}:=\Theta$, $\mathbf{L}_i$,
$\mathbf{B}_i:=\underset{\mathcal{G}\in\mathcal{F}l(\mathcal{F},E)}\cup\mathbf{B}_i(\mathcal{G})$,
$\mathbf{\Pi}_i:=\underset{\mathcal{G}\in\mathcal{F}l(\mathcal{F},E)}\cup\mathbf{\Pi}_i(\mathcal{G})$.
As a result, Theorem \ref{E on linear X} implies the following theorem.

\begin{theorem}\label{BVTS for flags}
Let $V$ be a countable-dimensional vector space with basis $E$. Let
$\mathcal{F}$ be a flag in $V$ satisfying (\ref{assumptn inf}) weakly compatible with $E$.
Then any vector bundle on $\mathcal{F}l(\mathcal{F},E)$ is isomorphic to a direct sum of line bundles.
\end{theorem}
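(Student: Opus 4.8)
The plan is to deduce the theorem from Theorem~\ref{E on linear X}(ii). For this it suffices to check that $\mathcal{F}l(\mathcal{F},E)$ is a linear ind-variety satisfying the properties L, A and T with respect to the data $\Theta$, $\{\mathbf{L}_i\}_{i\in\Theta}$, $\{\mathbf{B}_i\}_{i\in\Theta}$ and $\{\mathbf{\Pi}_i\}_{i\in\Theta}$ introduced above; then part (ii) of that theorem gives at once that every vector bundle on $\mathcal{F}l(\mathcal{F},E)$ is a direct sum of line bundles, so the entire proof reduces to the verification of L, A and T. First I would record linearity, the freeness of $\mathrm{Pic}\,X_m$ for $X_m=\mathcal{F}l(d_m,V_m)$, and the fact that all but finitely many of the $\mathbf{L}_i$ restrict to $\mathcal{O}_{X_m}$: this is a direct inspection of the embeddings $\iota_m$, under which each Picard generator (a relative hyperplane bundle of a forgetful map) of $\mathcal{F}l(d_{m+1},V_{m+1})$ is sent to a generator of $\mathrm{Pic}\,X_m$ or to $\mathcal{O}_{X_m}$.

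To finish property L one must check $H^1(X_m,\otimes_iL_{im}^{\otimes a_i})=0$ whenever some $a_i<0$. By assumption~(\ref{assumptn inf}) the consecutive gaps of the type $d_m$ tend to infinity with $m$, so for a fixed tuple $(a_i)$ Bott's theorem on flag varieties yields this vanishing once $m$ is large; replacing the exhaustion $\{X_m\}$ by a subsequence (which does not change the ind-variety) then makes L hold on the nose. For property A, part (A.i) follows because $\mathbf{L}_i$ has degree $1$ on the lines of $\mathbf{B}_i$ by~(\ref{Li|Pj}) and restricts on each fibre of the projection $\pi_{im}\colon\mathcal{F}l(d_m,V_m)\to\mathcal{F}l(\hat{d}_m(i),V_m)$ to the very ample Plücker generator; hence the morphism attached to $L_{im}$ sends each line of $B_{im}(x)$ isomorphically onto a line through $\psi_{im}(x)$, exactly as in the Grassmannian case. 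For (A.ii)--(A.iv) I would use the key structural fact that the fibre of $\pi_i$ through a point is the ind-Grassmannian $\mathcal{F}l(G_i/G_{i-1},E(i))\simeq\mathbf{G}(\infty)$, and that $\mathbf{B}_i$ and $\mathbf{\Pi}_i$ restrict on it to the families $\mathbf{B}$, $\mathbf{\Pi}$ attached to $\mathbf{G}(\infty)$. Since $\mathbf{G}(\infty)$ satisfies L, A and T (footnote to Theorem~\ref{theorem4.4}) and (A.ii)--(A.iv) are statements purely about $x$, $B_{im}(x)$ and $\Pi_{im}(x)$ inside a single fibre of $\pi_i$, they are inherited verbatim from the Grassmannian case.

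The remaining point, property T, is the delicate one. Given a linearly trivial bundle $\mathbf{E}=\underset{\leftarrow}\lim E_m$, each $E_m$ is linearly trivial on $X_m=\mathcal{F}l(d_m,V_m)$, and I would prove $E_m$ trivial by descending induction along the finite tower of Grassmann-bundle projections obtained by forgetting the terms of the flag $\mathcal{F}_m$ one at a time, ending at $\mathrm{Spec}\,\mathbb{C}$. For a single step $\rho\colon\mathcal{F}l\to\mathcal{F}l'$ of this tower, linear triviality forces $E_m$ to be trivial on every Grassmannian fibre of $\rho$ (the classical fact used already in the proof of Proposition~\ref{proposition5.4}), so $\rho^*\rho_*E_m\overset{\sim}\to E_m$ by cohomology and base change; and $\rho_*E_m$ is again linearly trivial on $\mathcal{F}l'$ because every projective line of every family on $\mathcal{F}l'$ lifts to a rational curve in $\mathcal{F}l$ isomorphic to it under $\rho$ --- the analogue for the forgetful maps of Lemmas~\ref{lemma5.3} and~\ref{lemma6.4} --- along which $E_m$, and hence $\rho_*E_m$ on the line, is trivial. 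Iterating down to the point gives $E_m$ trivial, so $\mathbf{E}$ is trivial, and Theorem~\ref{E on linear X}(ii) applies.

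I expect the main obstacle to be precisely this verification of property T: establishing the line-lifting statement for the forgetful projections between flag varieties, so that linear triviality survives each pushforward $\rho_*$, together with the bookkeeping needed when $\Theta$ is infinite --- here one uses that each truncation $X_m$ involves only the finitely many terms of $\mathcal{F}$ meeting $V_m$, so the tower is finite at every level. A secondary, purely technical nuisance is property L, where one may be forced to pass to a subsequence of the defining exhaustion.
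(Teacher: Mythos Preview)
Your proposal takes exactly the paper's route---verify properties L, A and T for $\mathcal{F}l(\mathcal{F},E)$ and then invoke Theorem~\ref{E on linear X}(ii)---and in fact supplies substantially more detail than the paper, which merely declares these verifications ``easy to check'' and states the theorem. The one imprecision worth flagging is in your treatment of T: the fibres of a single forgetful step $\rho$ are Grassmannians $G(d_i-d_{i-1},G_{i+1}/G_{i-1})$, not projective spaces, so the input you need is not the projective-space fact from the proof of Proposition~\ref{proposition5.4} but rather the (easier, unstated) $G(k,W)$-analogue of Proposition~\ref{proposition7.4}, proved by the same incidence-variety induction; with that adjustment your descent argument goes through.
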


It is an interesting question whether the BVTS Theorem holds on any ind-variety of
generalized flags $\mathcal{F}l(\mathcal{F},E)$ under the assumption that the
generalized flag
$\mathcal{F}$ satisfies (\ref{assumptn inf}) for all $F'\in\mathcal{F}'$ and their
respective successors $F''$.

\end{sub}

\vspace{1cm}
\section{Appendix}\label{Appendix}

\vspace{1cm}
In this appendix we collect some general facts about coherent sheaves on projective
varieties and their behaviour under flat projective morphisms, which are used throughout
the paper.

\begin{proposition}\label{trivial on fibres}
Let $p:Y\to X$ be a smooth flat projective morphism of projective
varieties with irreducible fibres.

1) If $E$ is a vector bundle on $Y$, trivial on the fibres of $p$, then the evaluation morphism
$ev:p^*p_* E\to E$ is an isomorphism.

2) If $F$ be a vector bundle on $X$, then the canonical morphism $F\overset{\sim}\to p_*p^*F$ is an isomorphism.
\end{proposition}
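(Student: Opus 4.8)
The plan is to deduce both statements from the theorem on cohomology and base change together with the projection formula; no input beyond the stated hypotheses is needed. The first step is to record that $p_*\mathcal{O}_Y=\mathcal{O}_X$. Since $p$ is flat and projective with nonempty fibres it is surjective, and for every $x\in X$ the fibre $Y_x$ is a smooth irreducible projective variety over $\mathbb{C}$, hence integral, so $H^0(Y_x,\mathcal{O}_{Y_x})=\mathbb{C}$. As this $h^0$ is the constant function $1$ and $X$ is reduced, the base-change theorem \cite[Ch.~III, \S12]{H1} shows that $p_*\mathcal{O}_Y$ is invertible and that the canonical morphism $\mathcal{O}_X\to p_*\mathcal{O}_Y$, being an isomorphism on every fibre, is an isomorphism.

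Part 2) then follows immediately: by the projection formula (valid because $F$ is locally free of finite rank) one has $p_*p^*F\simeq p_*(\mathcal{O}_Y\otimes p^*F)\simeq (p_*\mathcal{O}_Y)\otimes F\simeq F$, and under these identifications the canonical morphism $F\to p_*p^*F$ is the identity.

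For part 1), put $r:=\mathrm{rk}\,E$. Because $E|_{Y_x}$ is trivial, $h^0(Y_x,E|_{Y_x})=r\cdot h^0(Y_x,\mathcal{O}_{Y_x})=r$ for every $x$, so this $h^0$ is again constant. Hence, by \cite[Ch.~III, \S12]{H1}, the sheaf $p_*E$ is locally free of rank $r$ and its formation commutes with base change; in particular the natural map $(p_*E)\otimes k(x)\to H^0(Y_x,E|_{Y_x})$ is an isomorphism for each $x$, and $p^*p_*E$ is locally free of rank $r$ on $Y$. The evaluation morphism $ev\colon p^*p_*E\to E$ then restricts on each fibre $Y_x$ to the evaluation map $H^0(Y_x,E|_{Y_x})\otimes_{\mathbb{C}}\mathcal{O}_{Y_x}\to E|_{Y_x}$, which is an isomorphism precisely because $E|_{Y_x}$ is trivial. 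Thus $ev$ is a morphism of locally free sheaves of the same rank $r$ which is an isomorphism at every point of every fibre of $p$, hence at every point of $Y$; by Nakayama its cokernel vanishes, so $ev$ is surjective, and a surjective morphism between locally free sheaves of equal rank is an isomorphism.

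The argument is routine, and the only point requiring care is the base-change bookkeeping: it is the fibrewise triviality of $E$ --- rather than some weaker hypothesis on $E|_{Y_x}$ --- that simultaneously forces $h^0(E|_{Y_x})$ to equal the constant $r$ (so that $p_*E$ is locally free with formation commuting with base change) and makes the fibrewise evaluation map an isomorphism. Everything else is a formal consequence of cohomology and base change and of the projection formula.
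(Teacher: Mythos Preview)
Your proof is correct and follows essentially the same approach as the paper: both arguments rest on the base-change theorem (Hartshorne, Ch.~III, \S12) to control $p_*E$ and $p_*\mathcal{O}_Y$, and on the projection formula for part 2). The only cosmetic difference is that the paper orders the two parts the other way round and invokes Stein factorization to pass from ``$p_*\mathcal{O}_Y$ is invertible'' to ``$p_*\mathcal{O}_Y=\mathcal{O}_X$'', whereas you obtain this identification directly from the fibrewise computation $H^0(Y_x,\mathcal{O}_{Y_x})=\mathbb{C}$.
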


\begin{proof}
1) This follows easily from the Base-change Theorem [H, Ch.~III, Cor.~12.9].

2) Consider the Stein factorization
$f:Y\overset{f'}\to X'\overset{g}\to X$ of $f$,
where
$X'=\mathbf{Spec}(f_*\mathcal{O}_{Y})$ and
$f'_*\mathcal{O}_{Y}= \mathcal{O}_{X'}$
(see \cite[Ch. III, Cor.12.9]{H1}). Since $f_*\mathcal{O}_{Y}$
is an invertible sheaf by 1), it follows that $g$ is an isomorphism. Therefore
$f_*\mathcal{O}_{Y}=\mathcal{O}_{X}$.
This, together with the projection formula \cite[Ch. III, Exc. 8.3]{H1}, gives the desired
assertion.
\end{proof}

\begin{proposition}\label{proposition7.2}
Let $\pi:{Y}\to {X}$ be a surjective morphism of smooth irreducible projective varieties such that:

(i) the fibres of $\pi$ are projective spaces;

(ii) the variety ${Z}:=\{x \in {X}\ |\  \dim \pi^{-1}(x)>\dim {Y}-\dim {X}\}$ has \textbf{•}codimension at least $3$ in ${X}$, and the variety
$B:=\pi^{-1}(Z)$ has codimension  at least $2$ in $Y$;

(iii) there exists a vector bundle $F$ on $X\setminus Z$ such that
$\pi:Y\setminus B\simeq \mathbb{P}(F)\to X\setminus Z$
is the structure  map of the projectivized vector bundle $F$.

Next, let $E$ be a vector bundle on $Y$, trivial along the fibres of $\pi.$
Then the $\mathcal{O}_{X}$-sheaf $\pi_*E$ is locally free and the evaluation
morphism $ev:\pi^*\pi_* E\to E$ is an isomorphism.
\end{proposition}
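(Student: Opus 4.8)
The plan is to leverage the genuine projective bundle structure on $\pi^{-1}(X\smallsetminus Z)$ and then to extend the resulting data across the two ``small'' loci $B\subset Y$ and $Z\subset X$, using smoothness of $X$, $Y$ and the codimension bounds in (ii).

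Step~1 (reduction to the projective bundle). Write $U:=X\smallsetminus Z$, $V:=\pi^{-1}(U)=Y\smallsetminus B$, and let $j\colon U\hookrightarrow X$, $i\colon V\hookrightarrow Y$ be the open inclusions. By (iii), $\pi_U:=\pi|_V\colon V\simeq\mathbb{P}(F)\to U$ is the structure morphism of a projectivized bundle, hence a smooth flat projective morphism with irreducible fibres (projective spaces). Since $E|_V$ is trivial along these fibres, cohomology and base change (equivalently, the argument of Proposition~\ref{trivial on fibres}(1) applied to $\pi_U$) shows that $G_0:=\pi_{U*}(E|_V)$ is locally free of rank $\rk E$ on $U$, that $R^{>0}\pi_{U*}(E|_V)=0$, and that the evaluation map $\pi_U^{*}G_0\to E|_V$ is an isomorphism.

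Step~2 (extension across $B$). Since $Y$ is smooth and $\codim_Y B\ge2$, the locally free --- hence reflexive --- sheaf $E$ satisfies $E\simeq i_*(E|_V)$, because sections of a vector bundle extend across a closed subset of codimension $\ge2$ on a smooth variety. Combining this with Step~1, the projection formula and $\pi_{U*}\mathcal{O}_V=\mathcal{O}_U$ gives
\[
\pi_*E\;\simeq\;\pi_*i_*(E|_V)\;=\;(j\circ\pi_U)_*\big(\pi_U^{*}G_0\big)\;=\;j_*\big(\pi_{U*}\pi_U^{*}G_0\big)\;=\;j_*G_0.
\]
Thus $\pi_*E$ is torsion free and coincides with the locally free sheaf $G_0$ outside the codimension $\ge3$ set $Z$; since $\codim_X Z\ge2$ this already shows that $\pi_*E$ is reflexive.

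Step~3 (local freeness across $Z$ --- the main obstacle). What remains, and what I expect to be the genuinely delicate point, is to upgrade ``reflexive'' to ``locally free'', i.e. to show that $\pi_*E=j_*G_0$ has no singularities along $Z$. Here one uses that, by hypothesis, $E$ is trivial along \emph{every} fibre of $\pi$, including the fibres $\pi^{-1}(z)$ over $z\in Z$, which are projective spaces $\mathbb{P}^{N_z}$ of dimension $N_z>\dim Y-\dim X$. Fixing $z\in Z$ and localizing $X$ at $z$, the ideal sheaf of the fibre is $\mathcal{I}=\mathfrak{m}_z\mathcal{O}_Y$, so the conormal sheaf $\mathcal{I}/\mathcal{I}^2$ of $\pi^{-1}(z)$ in $Y$ is a quotient of the trivial bundle $(\mathfrak{m}_z/\mathfrak{m}_z^2)\otimes\mathcal{O}_{\pi^{-1}(z)}$, hence so is each $\mathcal{I}^{\,n}/\mathcal{I}^{\,n+1}$; together with $\codim_Y B\ge2$ (which controls $N_z$ and the conormal) this yields the vanishing of the obstruction groups
\[
H^1\big(\mathbb{P}^{N_z},\,E|_{\pi^{-1}(z)}\otimes\mathcal{I}^{\,n}/\mathcal{I}^{\,n+1}\big)=0,\qquad n\ge1.
\]
By the theorem on formal functions the $\rk E$ sections trivializing $E|_{\pi^{-1}(z)}$ then lift to a neighbourhood of $\pi^{-1}(z)$, so by Nakayama (and the base change already available over $U$) they trivialize $\pi_*E$ near $z$. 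Hence $\pi_*E$ is locally free at every point of $Z$, and, being locally free on $U$ as well, it is a vector bundle on $X$. Equivalently, one may phrase Step~3 as a depth computation: $\operatorname{depth}_x(j_*G_0)$ is maximal for all $x\in Z$ because the local cohomology of $\pi_U^{*}G_0=E|_V$ along $B$ on the projective bundle $V$ has no positive-degree contributions, the fibrewise triviality of $E$ being precisely what forces the relevant higher cohomology on the projective-space fibres to vanish.

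Step~4 (the evaluation map). Once $\pi_*E$ is known to be locally free, $ev\colon\pi^*\pi_*E\to E$ is an isomorphism: $\pi^*\pi_*E$ is then locally free on $Y$, it agrees with $E$ over $V$ by Step~1, and two vector bundles on the smooth variety $Y$ that agree outside the codimension $\ge2$ set $B$ coincide. The only step that is not a routine manipulation of pushforwards and codimension $\ge2$ extensions is the vanishing of the obstruction groups in Step~3 guaranteeing local freeness across $Z$; I expect that to be where the real work lies.
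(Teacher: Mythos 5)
Your Steps 1, 2 and 4 are sound and run parallel to the paper's argument: the projective-bundle base change over $X\setminus Z$, the identification $\pi_*E\simeq j_*G_0$ (so that $\pi_*E$ is reflexive), and the extension of the evaluation isomorphism across the codimension~$\ge 2$ set $B$ are all fine. The genuine gap is Step 3, which you yourself flag as the real work but do not actually carry out. The asserted vanishing $H^1\big(\mathbb{P}^{N_z},E|_{\pi^{-1}(z)}\otimes\mathcal{I}^{\,n}/\mathcal{I}^{\,n+1}\big)=0$ does not follow from the reasons you give: being a quotient of a trivial sheaf (i.e.\ globally generated) does not force $H^1=0$ on a projective space of dimension $\ge 2$, and the codimension bounds in (ii) say nothing about the cohomology of the graded pieces $\mathcal{I}^{\,n}/\mathcal{I}^{\,n+1}$ along the jumping fibres --- hypotheses (i)--(iii) give no control whatsoever on the infinitesimal structure of $\pi$ along $B$. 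There is the further wrinkle that the scheme-theoretic fibres over $Z$ may be non-reduced, so even the identification of the obstruction spaces with cohomology on $\mathbb{P}^{N_z}$ of those graded conormal pieces needs justification. As written, Step 3 is a plan for a proof, not a proof, and it aims at a statement (vanishing of all these obstruction groups) that is stronger than what the proposition requires and is not obviously true under the stated hypotheses.

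The paper avoids any infinitesimal analysis at the bad fibres by reversing the order of the two conclusions: it first proves that $ev:\pi^*\pi_*E\to E$ is an isomorphism, and only then deduces local freeness. Concretely, after showing $\pi_*E$ is normal and torsion-free, hence reflexive, it forms $\tilde{E}:=\pi^*\pi_*E/\mathrm{Tors}(\pi^*\pi_*E)$ (pullback of a reflexive sheaf need not be reflexive, which is why this torsion quotient appears), shows $\tilde{E}$ is reflexive by pulling back a local presentation $0\to\pi_*E\to L_1\to L_2$, and concludes $\pi^*\pi_*E\cong i_*(\tilde E|_{Y\setminus B})\cong i_*(E|_{Y\setminus B})\cong E$ using \cite[Prop.~1.6]{H2}. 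Local freeness of $\pi_*E$ then comes for free: since $\pi^*\pi_*E\cong E$ is locally free, the fibre dimension $\dim_{\mathbb{C}(x)}(\pi_*E\otimes\mathbb{C}(x))$ is constant on the smooth variety $X$, and a coherent sheaf of constant fibre rank on a reduced base is locally free \cite[\S5, Lemma~1]{M}. You could repair your argument in exactly this way: discard Step 3, prove the evaluation isomorphism directly from your $\pi_*E\simeq j_*G_0$ (handling the torsion of $\pi^*j_*G_0$ as above), and then obtain local freeness from the constant-rank argument rather than from formal functions at points of $Z$.
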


\begin{proof}
We first show that $ev:\pi^*\pi_* E\to E$ is an isomorphism.
For this, consider an arbitrary open subvariety $U\subset X$ and its
closed subvariety $A\subset U$ such that
\begin{equation}\label{equation88}
{\rm codim}_U A\ge2.
\end{equation}
Since $X$ is smooth and $Z$ has codimension $\ge3$ in
$X$, it follows that ${\rm codim}_U(Z\cap A)\ge3$ and
${\rm codim}_{\pi^{-1}(U)}\pi^{-1}(Z\cap A)\ge2$. Next, (iii) and (\ref{equation88}) imply
${\rm codim}_{\pi^{-1}(U)}\pi^{-1}(Z\setminus Z\cap A)\ge2.$ Hence
\begin{equation}\label{equation89}
{\rm codim}_{\pi^{-1}(U)}\pi^{-1}( A)\ge2.
\end{equation}

Let $s\in H^0(U\setminus A,\pi_* E|_{U\setminus A})$ and let $\tilde{s}:=\phi(s)$,
where
$\phi: H^0(U\setminus A,\pi_* E|_{U\setminus A})\overset{\simeq}\to
H^0(\pi^{-1}(U\setminus A),E|_{\pi^{-1}(U\setminus A)})$
is the canonical isomorphism. Since $E$ is a locally free sheaf on a smooth variety
$Y$, $E$ is normal by \cite[Prop. 1.6(ii)]{H2}, i.e. (\ref{equation89}) implies
that $\tilde{s}$ extends uniquely to a section
$\tilde{s}'\in H^0(\pi^{-1}(U),E|_{\pi^{-1}(U)})$.
Then $s$ extends to the section
$s':=\psi(\tilde{s}')\in H^0(U,\pi_* E|_U)$,
where
$\psi:H^0(\pi^{-1}(U),E|_{\pi^{-1}(U)}) \overset{\simeq}\to H^0(U,\pi_* E|_U)$
is the canonical isomorphism. In view of (\ref{equation88}) this means that the sheaf $\pi_* E$ is normal.

Note that $\pi_* E$ is torsion-free. Indeed, if the torsion subsheaf $Tors(\pi_* E)$
were nonzero, then since $E$ is locally free, by (iii) any section
$0\ne s\in H^0(Y,Tors(\pi_* E))$
would be supported in $Z$. Then the section
$0\ne \tilde{s}:=\psi^{-1}(s)$ would be supported in $B$,  i.e. $Tors(E)\ne 0$, This contradicts the assumptions that $E$ is locally free and $Y$ is smooth
and irreducible.

Hence, $\pi_* E$ is reflexive by \cite[Prop. 1.6]{H2}.
Set
$$
\tilde{E}:=\pi^*\pi_*E/Tors(\pi^*\pi_*E).
$$
Proposition \ref{trivial on fibres}, together with (iii), implies the existence of an
isomorphism
\begin{equation}\label{equation91}
\alpha:\tilde{E}|_{Y\setminus B}\overset{\simeq}\to
E|_{Y\setminus B}.
\end{equation}
Now by \cite[Prop. 1.1]{H2} the
sheaf $\pi_* E$ can, locally on $X$, be included in an exact sequence
\begin{equation}\label{equation90}
0\to\pi_*E\to L_1\to L_2,
\end{equation}
with locally free sheaves $L_1$ and $L_2$.
Applying to (\ref{equation90}) the functor $\pi^*$ we obtain the sequence
$$
0\to\tilde{E}\to\pi^*L_1\to\pi^*L_2
$$
which is exact when restricted onto $Y\setminus B$. Hence, this
sequence is exact as $\tilde{E}$ is torsion free and the sheaves $\pi^*L_1$ and
$\pi^*L_2$ are locally free. By \cite[Prop. 1.1]{H2} this implies that $\tilde{E}$ is
reflexive. Therefore, denoting by $i$ the inclusion
$Y\setminus B \hookrightarrow Y$
and using the isomorphism (\ref{equation91}) and \cite[Prop. 1.6(iii)]{H2} we obtain an
isomorphism
$$
\pi^*\pi_*E=\tilde{E}\cong i_*(\tilde{E}|_{Y\setminus
B})\overset{i_*\alpha}{\underset{\simeq}\to} i_*({E}|_{Y
\setminus B})\cong E.
$$
This isomorphism is nothing but the evaluation morphism $ev$.

It remains to show that $\pi_* E$ is locally free. The isomorphism $ev$ implies  that
$\pi^*\pi_*E$ is locally free. Therefore, for any $y\in Y$,
$r:=\dim_{\mathbb{C}(y)}(\pi^*\pi_*E\otimes\mathbb{C}(y))$ does not depend on $y$, and
consequently, $\dim_{\mathbb{C}(x)}(\pi_*E\otimes\mathbb{C}(x))=r$.
According to \cite[\S5, Lemma 1]{M}, since $X$ is smooth, the fact that
$\dim_{\mathbb{C}(x)}(\pi_*E\otimes\mathbb{C}(x))$ does not depend on $x\in X$
implies that $\pi_*E$ is locally free.

\end{proof}

\begin{proposition}\label{trivial on lines}
Let $Q_n$ be a nonsingular $n$-dimensional quadric in $\mathbb{P}^{n+1}$ for $n\ge2$,
and let $E$ be a linearly trivial vector bundle on $Q_n$. Then $E$ is trivial.
\end{proposition}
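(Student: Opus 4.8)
The plan is to induct on $n$, using a general pencil of hyperplane sections to reduce a linearly trivial bundle on $Q_n$ to linearly trivial bundles on the smooth members $Q_{n-1}$ of the pencil (handled by induction) and on its finitely many singular members, which are quadric cones (handled by a separate lemma). For the base case $n=2$ one has $Q_2\cong\mathbb P^1\times\mathbb P^1$ with the two rulings as the families of lines; applying Proposition \ref{trivial on fibres} to $\mathrm{pr}_1:Q_2\to\mathbb P^1$ (whose fibres are lines, hence trivial for $E$) gives $E\cong\mathrm{pr}_1^*G$ with $G:=\mathrm{pr}_{1*}E$, and restricting to a line $\mathbb P^1\times\{y\}$ of the first ruling, on which $\mathrm{pr}_1$ is an isomorphism, forces $G\cong E|_{\mathbb P^1\times\{y\}}$ to be trivial.

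The auxiliary lemma I will prove first is: \emph{if $C$ is the projective cone over a smooth $m$-dimensional quadric $Q_m$ ($m\ge1$), with vertex $v$, and $\mathcal F$ is a vector bundle of rank $r$ on $C$ whose restriction to every ruling line is trivial, then $\mathcal F$ is trivial.} Indeed, the blow-up $\pi:\widehat C\to C$ of the vertex is the $\mathbb P^1$-bundle $\widehat\pi:\widehat C=\mathbb P_{Q_m}(\mathcal O\oplus\mathcal O(-1))\to Q_m$; $\pi$ maps the $\widehat\pi$-fibres isomorphically onto the ruling lines and contracts one section $s\cong Q_m$ of $\widehat\pi$ to $v$. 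By hypothesis $\pi^*\mathcal F$ is trivial on the fibres of $\widehat\pi$, so Proposition \ref{trivial on fibres} gives $\pi^*\mathcal F\cong\widehat\pi^*G'$ with $G':=\widehat\pi_*\pi^*\mathcal F$; restricting to $s$, where $\widehat\pi$ is an isomorphism onto $Q_m$ while $\pi$ is constant, yields $G'\cong(\pi^*\mathcal F)|_s\cong\mathcal O_{Q_m}^{\oplus r}$, so $\pi^*\mathcal F$ is trivial, and since $C$ is normal, $\mathcal F\cong\pi_*\pi^*\mathcal F$ is trivial by the projection formula.

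For the inductive step ($n\ge3$), embed $Q_n\subset\mathbb P^{n+1}$ and take a general pencil $\{H_t\}_{t\in\mathbb P^1}$ of hyperplanes, with axis $\mathbb P^{n-1}=\bigcap_tH_t$ and base locus $B:=Q_n\cap\mathbb P^{n-1}$, a smooth $(n-2)$-dimensional quadric (Bertini). Set $\widetilde Q:=\{(y,t)\in Q_n\times\mathbb P^1\mid y\in H_t\}$; this is the blow-up $\sigma:\widetilde Q\to Q_n$ of $Q_n$ along $B$, so $\widetilde Q$ is smooth, and $g:=\mathrm{pr}_2:\widetilde Q\to\mathbb P^1$ is flat and projective. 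Since $B$ is a Cartier divisor in each member $Q_n\cap H_t$, the fibre $g^{-1}(t)$ is the strict transform of $Q_n\cap H_t$ and is isomorphic to $Q_n\cap H_t$; the dual variety of the smooth quadric $Q_n$ being again a smooth quadric hypersurface, for a general pencil exactly two members are singular, each a cone over a smooth $Q_{n-2}$, and the rest are smooth quadrics $Q_{n-1}$. Put $\mathcal E:=\sigma^*E$. As the lines of any $Q_n\cap H_t$ are lines of $Q_n$, the restriction $\mathcal E|_{g^{-1}(t)}\cong E|_{Q_n\cap H_t}$ is linearly trivial, hence trivial: by the inductive hypothesis when $g^{-1}(t)\cong Q_{n-1}$ and by the cone lemma on the two singular fibres. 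Since every fibre $Y_t$ has $H^1(Y_t,\mathcal O_{Y_t})=0$ (a smooth quadric of positive dimension, or a cone, via $0\to\mathcal O_{\mathbb P^n}(-2)\to\mathcal O_{\mathbb P^n}\to\mathcal O_{Y_t}\to0$) and therefore $H^1(Y_t,\mathcal E|_{Y_t})=0$, cohomology and base change show $g_*\mathcal E$ is locally free of rank $r=\rk E$ and that $g^*g_*\mathcal E\to\mathcal E$ is an isomorphism on each fibre, hence an isomorphism; thus $\mathcal E\cong\bigoplus_{i=1}^r g^*\mathcal O_{\mathbb P^1}(a_i)$ for $g_*\mathcal E\cong\bigoplus_i\mathcal O_{\mathbb P^1}(a_i)$. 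Finally, a general line $\ell\subset Q_n$ satisfies $\ell\cap B=\varnothing$ and lies in no member of the pencil, so $\sigma^{-1}(\ell)\cong\ell$ and $g|_{\sigma^{-1}(\ell)}:\ell\to\mathbb P^1$ has degree $1$; comparing $\bigoplus_i\mathcal O_{\mathbb P^1}(a_i)\cong\mathcal E|_{\sigma^{-1}(\ell)}\cong E|_\ell\cong\mathcal O_\ell^{\oplus r}$ forces all $a_i=0$, so $\mathcal E\cong\mathcal O_{\widetilde Q}^{\oplus r}$, and hence $E\cong\sigma_*\mathcal E\cong\mathcal O_{Q_n}^{\oplus r}$ since $\sigma_*\mathcal O_{\widetilde Q}=\mathcal O_{Q_n}$ for a blow-up with smooth centre.

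The conceptual content is concentrated in the cone lemma, which however is itself an immediate consequence of Proposition \ref{trivial on fibres}. The main routine obstacle will be the geometric bookkeeping for the pencil: verifying that for a general pencil $\widetilde Q$ is smooth, that $g$ is flat with the asserted smooth‑quadric or quadric‑cone fibres, that $g^{-1}(t)$ coincides with the strict transform of $Q_n\cap H_t$ (for which $B$ being Cartier in each member is the key point), and that a general line of $Q_n$ meets neither $B$ nor any member of the pencil. One should also note that Proposition \ref{trivial on fibres} cannot be applied to $g$ itself — $g$ is not smooth — which is why the cohomology‑and‑base‑change argument is used for that descent instead.
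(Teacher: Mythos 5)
Your proof is correct, and although it shares the paper's skeleton --- induction on $n$ together with the pencil of hyperplane sections through a codimension-two linear subspace, realized as the blow-up of $Q_n$ along the smooth quadric $B\cong Q_{n-2}$, with exactly two quadric-cone members --- it departs from the paper at the two key points. The paper never establishes triviality of the restriction to the singular fibres: it blows up each cone at its vertex only to extract cohomology vanishings ($H^i=0$ for $i\ge1$, and $H^i(\,\cdot\,(-D))=0$ for $i\ge0$), and it obtains triviality of $\pi_*\tilde{E}$ on $\mathbb{P}^1$ by restricting to the exceptional divisor $D\cong Q_{n-2}\times\mathbb{P}^1$ via the sequence $0\to\tilde{E}(-D)\to\tilde{E}\to\tilde{E}|_D\to0$; this is why the paper needs the preliminary step that $E|_{Q_{n-2}}$ is trivial, including a separate argument at $n=3$ (an auxiliary quadric surface through the conic). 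Your cone lemma instead yields full triviality on the singular fibres directly, by applying Proposition \ref{trivial on fibres} to the blown-up cone viewed as a $\mathbb{P}^1$-bundle over $Q_{n-2}$ and using the contracted section (on which the pullback is canonically trivial) to identify the pushforward; with triviality on every fibre in hand, you descend by Grauert plus the fibrewise evaluation map, and you kill the twists of $g_*\mathcal{E}\cong\bigoplus_i\mathcal{O}_{\mathbb{P}^1}(a_i)$ by restricting to a general line of $Q_n$ whose preimage maps isomorphically to $\mathbb{P}^1$ under $g$. What this buys is a cleaner induction: no restriction step on $B$, no special case at $n=3$, and the only deferred facts (generality of the pencil, flatness of $g$, the two cone members, existence of the general line) are standard; for the last one you could even skip the dimension count by restricting instead to a fibre $\{b\}\times\mathbb{P}^1$ of the exceptional divisor, which is a section of $g$ on which $\mathcal{E}\cong E_b\otimes\mathcal{O}$ is already trivial --- essentially the role the divisor $D$ plays in the paper's argument.
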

\begin{proof}
We argue by induction on $n$. For $n=2$ the proof is easy and is
the same as for a projective space as given in \cite[Ch. I, Thm. 3.2.1]{OSS}.
Thus we may assume that $n\ge3$.
Consider a codimension-2 subspace $\mathbb{P}^{n-1}$ in $\mathbb{P}^{n+1}$ such that
$Q_{n-2}:=Q_n\cap \mathbb{P}^{n-1}$ is a smooth quadric of dimension $n-2$.
If $n\ge4$ then $E|_l$ is trivial for any projective line $l\subset Q_{n-2}$, hence
$E|_{Q_{n-2}}$ is trivial by the induction assumption.
For $n=3$ the quadric $Q_{n-2}$ is a smooth conic $C$. By Bertini's Theorem there exists a
smooth quadric surface $Q_2$ on $Q_3$ passing through the conic $C$.
Since $E|_{Q_2}$ is trivial (being linearly trivial), $E|_C$ is also trivial,
i.e. our claim holds for $n=3$.

We will now use the triviality of $E|_{Q_{n-2}}$ for $n\ge4$ to show that $E$ is trivial.
Let $\sigma_Q:\tilde{Q}_n\to Q_n$ be the blow-up of $Q_n$ with center at
$Q_{n-2}$, and let $D:=\sigma_Q^{-1}(Q_{n-2})$ be the exceptional divisor.
Clearly, $D\simeq Q_{n-2}\times\mathbb{P}^1$ and there is a flat surjective morphism
$\pi:\tilde{Q}_n\to\mathbb{P}^1$ fitting in the commutative diagram
\begin{equation}\label{Lefschetz pencil}
\xymatrix{
D=Q_{n-2}\times\mathbb{P}^1\ar[dr]_{pr_2}\ar@{^{(}->}[r]^-{i} & \tilde{Q}_n\ar[d]^-{\pi}\\
& \mathbb{P}^1,}
\end{equation}
where $i$ is the embedding of the exceptional divisor. By construction, there exist two
distinct points
$t_1,t_2\in\mathbb{P}^1$ such that
the fibre $Q_{n-1}(t)=\pi^{-1}(t)$ is a smooth quadric
for $t\in U:=\mathbb{P}^1\smallsetminus\{t_1\cup t_2\}$,
and $Q_{n-1}(t_j):=\pi^{-1}(t_j)$ for $j=1,2$
are quadratic cones whose vertices are points.

Consider the vector bundle $\tilde{E}:=\sigma_Q^*E$ on $\tilde{Q}_n$. By construction,
$\tilde{E}$ is trivial on any projective line $l\subset Q_{n-1}(t),\ t\in U$.
Hence, by the induction assumption,
$E|_{Q_{n-1}(t)},\ t\in U$,
is trivial. Consequently,
\begin{equation}\label{h^i=}
H^i(Q_{n-1}(t),\tilde{E}(-D)|_{Q_{n-1}(t)})=0,\ i\ge0,\ \ \
\dim H^i(Q_{n-1}(t),\tilde{E}|_{Q_{n-1}(t)})=
\left\{\begin{array}{cc}
r, & {\rm if}\ i=0,\\
0, & {\rm if}\ i\ge1,
\end{array}\right.
\ \ \ t\in U.
\end{equation}

Next, for $j=1,2$, let $\sigma:K_j\to Q_{n-1}(t_j)$ be the blow-up of the cone
$Q_{n-1}(t_j)$ with center at the singular point. Let $f_j:K_j\to Q_{n-2}$
be the induced $\mathbb{P}^1$-bundle, the fibres of which map to projective lines on
$Q_{n-1}(t_j)$ under the morphism $\sigma$. Since
$\tilde{E}_{t_j}:=\tilde{E}|_{Q_{n-1}(t_j)}$
is trivial along the projective lines on $Q_{n-1}(t_j)$, it follows that the bundle
$\tilde{E}_{K_j}:=\sigma^*\tilde{E}_{t_j}$
is trivial along the fibers of $f_j$. Therefore, for an arbitrary point
$x\in Q_{n-2}$ we obtain
$$
H^i(Q_{n-2},\tilde{E}_{K_j}\otimes\mathbb{C}_x)=
H^i(\mathbb{P}^1,r\mathcal{O}_{\mathbb{P}^1})=0,\ \ i\ge1,
$$
$$
H^i(Q_{n-2},\tilde{E}_{K_j}(-\sigma^*D)\otimes\mathbb{C}_x)=
H^i(\mathbb{P}^1,r\mathcal{O}_{\mathbb{P}^1}(-1))=0,\ \ i\ge0.
$$
This, together with the Base-change Theorem for $f_j$, shows that
$R^if_{j*}\tilde{E}_{K_j}=0,\ \ i\ge1,\ \ \
R^if_{j*}(\tilde{E}_{K_j}(-\sigma^*D))=0,\ \ i\ge0$.
Hence the Leray spectral sequence for the projection $f_j$ yields
\begin{equation}\label{H^iEKj}
H^i(K_j,\tilde{E}_{K_j})=0,\ \ i\ge1,\ \ \
H^i(K_j,\tilde{E}_{K_j}(-\sigma^*D))=0,\ \ i\ge0,\ \ \ j=1,2.
\end{equation}

Next, one uses the embedded in $\mathbb{P}^n$ blow-up of the cone $Q_{n-1}(t_j)$ that
$\sigma_*\mathcal{O}_{K_j}=\mathcal{O}_{Q_{n-1}(t_j)}$ and $R^i\sigma_*\mathcal{O}_{K_j}=0,\ i\ge1$.
Therefore, setting
$\tilde{E}_{t_j}:=\tilde{E}|_{Q_{n-1}(t_j)}$,
we have by the projection formula:
$\sigma_*\tilde{E}_{K_j}=\tilde{E}_{t_j}$,\
$R^i\sigma_*\tilde{E}_{K_j}=0,\ i\ge1$,
and
$\sigma_*(\tilde{E}_{K_j}(-\sigma^*D))=\tilde{E}_{t_j}(-D)$,\
$R^i\sigma_*(\tilde{E}_{K_j}(-\sigma^*D))=0,\ i\ge1$.
Now the Leray spectral sequence applied to $\sigma$
shows in view of (\ref{H^iEKj}) that
\begin{equation}\label{again h^i=}
\begin{array}{cc}
H^i(Q_{n-1}(t_j),\tilde{E}_{t_j})=H^i(K_j,\tilde{E}_{K_j})=0, \ & i\ge1,\\
H^i(Q_{n-1}(t_j),\tilde{E}_{t_j}(-D))=H^i(K_j,\tilde{E}_{K_j}(-\sigma^*D))=0, &
\ \ \ \ i\ge0,\ \ \ j=1,2.
\end{array}
\end{equation}
The equalities (\ref{h^i=}) and (\ref{again h^i=})
yield via base change for the flat morphism $\pi$
\begin{equation}\label{2 relations}
R^i\pi_*\tilde{E}=0,\ i\ge1,\ \ \
R^i\pi_*(\tilde{E}(-D))=0,\ \ i\ge0.
\end{equation}
The same argument yields base change isomorphisms
\begin{equation}\label{1base ch}
b_t:\ \pi_*\tilde{E}\otimes\mathbb{C}_t\overset{\simeq}\to
H^0(Q_{n-1}(t),\tilde{E}|_{Q_{n-1}(t)}),\ \ \ t\in\mathbb{P}^1.
\end{equation}

Consider the divisor $D=Q_{n-2}\times\mathbb{P}^1$ on $\tilde{Q}_n$ (see diagram
(\ref{Lefschetz pencil})) and the projections
$Q_{n-2}\overset{pr_1}\leftarrow Q_{n-2}\times\mathbb{P}^1\overset{pr_2}\to\mathbb{P}^1$.
By definition,
$\tilde{E}|_D=pr_1^*(\tilde{E}_m|_{Q_{n-2}})$, hence, since $\tilde{E}|_{Q_{n-2}}$
is trivial, the base change for the flat morphism $pr_2$ gives the isomorphisms
\begin{equation}\label{b'}
b':\ \pr_{2*}(\tilde{E}|_D)\overset{\simeq}\to H^0(Q_{n-2},E|_{Q_{n-2}})\otimes\mathcal{O}_{\mathbb{P}^1}
\simeq\mathbb{C}^r\otimes\mathcal{O}_{\mathbb{P}^1},
\end{equation}
\begin{equation}\label{2base ch}
b'_t:\ \pr_{2*}(\tilde{E}|_D)\otimes\mathbb{C}_t\overset{\simeq}\to
H^0(Q_{n-2},E|_{Q_{n-2}})\simeq\mathbb{C}^r,\ \ \ t\in\mathbb{P}^1.
\end{equation}
Now consider the exact triple
\begin{equation}\label{triple D}
0\to\tilde{E}(-D)\to\tilde{E}\to E|_D\to0
\end{equation}
and its restriction onto a fibre $Q_{n-1}(t)$ of the projection $\pi$ over an arbitrary
point $t\in\mathbb{P}^1$
\begin{equation}\label{triple D res t}
0\to\tilde{E}(-D)|_{Q_{n-1}(t)}\to\tilde{E}|_{Q_{n-1}(t)}\to
(E|_{Q_{n-2}})\otimes\mathbb{C}_t\to0.
\end{equation}
Applying the functor $R^i\pi_*$ to (\ref{triple D}) and using (\ref{2 relations}) and (\ref{b'}) we obtain the isomorphism of sheaves
\begin{equation}\label{rD}
r_D:\ \pi_*\tilde{E}\overset{\sim}\to\pr_{2*}(\tilde{E}|_D)\simeq
\mathbb{C}^r\otimes\mathcal{O}_{\mathbb{P}^1}.
\end{equation}
In particular, $\pi_*\tilde{E}$ is a trivial bundle.
Respectively, passing to cohomology of the exact sequence (\ref{triple D res t})
and using (\ref{h^i=}), (\ref{again h^i=}) and (\ref{2base ch}), we obtain the isomorphisms
\begin{equation}\label{res t}
res_t:H^0(Q_{n-1}(t),\tilde{E}|_{Q_{n-1}(t)})\overset{\sim}\to
H^0(Q_{n-2},E|_{Q_{n-2}}),\ \ \ t\in\mathbb{P}^1.
\end{equation}
By construction the isomorphisms (\ref{1base ch}), (\ref{2base ch}), (\ref{rD}) and
(\ref{res t}) fit in the commutative diagram
\begin{equation}\label{bch diagram}
\xymatrix{
\pi_*\tilde{E}\otimes\mathbb{C}_t\ar[d]_{b_t}^{\simeq}\ar[r]^-{r_D\otimes\mathbb{C}_t}_-{\simeq} &
\pr_{2*}(\tilde{E}|_D)\otimes\mathbb{C}_t\ar[d]_-{b'_t}^{\simeq}\\
H^0(Q_{n-1}(t),\tilde{E}|_{Q_{n-1}(t)})\ar[r]^-{res_t}_-{\simeq} &
H^0(Q_{n-2},E|_{Q_{n-2}})=\mathbb{C}^r}
\end{equation}
for $t\in\mathbb{P}^1$.
Next, since $E|_{Q_{n-2}}$ is trivial, the evaluation map
$H^0(Q_{n-2},E|_{Q_{n-2}})\otimes\mathcal{O}_{Q_{n-2}}\to E|_{Q_{n-2}}$
is an isomorphism, so that its composition
$e_t$ with the restriction
$H^0(Q_{n-2},E|_{Q_{n-2}})\otimes\mathcal{O}_{Q_{n-1}(t)}\twoheadrightarrow
H^0(Q_{n-2},E|_{Q_{n-2}})\otimes\mathcal{O}_{Q_{n-2}}$
is an epimorphism for any $t\in\mathbb{P}^1$ and fits in the commutative diagram
\begin{equation}\label{2bch diagram}
\xymatrix{
H^0(Q_{n-1}(t),\tilde{E}|_{Q_{n-1}(t)})\otimes\mathcal{O}_{Q_{n-1}(t)}\ar[d]_{ev_t}
\ar[r]^-{\pi^*res_t}_-{\simeq} &
H^0(Q_{n-2},E|_{Q_{n-2}})\otimes\mathcal{O}_{Q_{n-1}(t)}\ar@{>>}[d]_-{e_t}\\
\tilde{E}|_{Q_{n-1}(t)}\ar@{>>}[r]^-{res_{Q_{n-2}}} & E|_{Q_{n-2}}.}
\end{equation}
Here we understand $Q_{n-2}$ as lying in $Q_{n-1}(t)$ as a divisor. In particular, through any point of
$Q_{n-1}(t)\smallsetminus Q_{n-2}$ there passes a line, say, $l$ interesecting $Q_{n-2}$ at a point, say $y$.
Therefore, since $\tilde{E}|_l$ is trivial, we have a commutative diagram of restriction maps
$$
\xymatrix{
H^0(Q_{n-1}(t),\tilde{E}|_{Q_{n-1}(t)})\ar[d]_{\rho}
\ar[r]^-{res_t}_-{\simeq} &
H^0(Q_{n-2},E|_{Q_{n-2}})\ar[d]^-{\simeq}\\
H^0(l,\tilde{E}|_l)\ar[r]^-{\simeq} & H^0(y,\tilde{E}|_y).}
$$
Hence, $\rho$ is an isomorphism, and therefore the evaluation morphism $ev_t$ in (\ref{2bch diagram}) is an isomorphism
of sheaves. Composing it with the isomorphism
$\pi^*b_t:\pi^*\pi_*\tilde{E}|_{Q_{n-1}(t)}\overset{\simeq}\to
H^0(Q_{n-1}(t),\tilde{E}|_{Q_{n-1}(t)})\otimes\mathcal{O}_{Q_{n-1}(t)}$
arising from the left vertical isomorphism in (\ref{bch diagram}) we obtain the (evaluation) isomorphism
$ev|_{Q_{n-1}(t)}:\ \pi^*\pi_*\tilde{E}|_{Q_{n-1}(t)}\overset{\simeq}\to\tilde{E}|_{Q_{n-1}(t)}$.
Since this is true for any $t\in\mathbb{P}^1$, we obtain the isomorphism
$ev:\ \pi^*\pi_*\tilde{E}\overset{\simeq}\to\tilde{E}$ which together with (\ref{rD}) leads to the
triviality of $\tilde{E}$. Since clearly $\sigma_{Q*}\mathcal{O}_{\tilde{Q}}=\mathcal{O}_Q$, it follows
that $E=\sigma_{Q*}\tilde{E}=
\sigma_{Q*}(r\mathcal{O}_{\tilde{Q}})=r\mathcal{O}_Q$, i. e. we obtain
the statement of Proposition.
\end{proof}

\begin{proposition}\label{proposition7.4}
Let $E$ be a linearly trivial vector bundle on $GO(k,V)$ or $GS(k,V)$. Then $E$ is trivial.
\end{proposition}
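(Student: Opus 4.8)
The plan is to proceed by induction on $k$, reducing in one step the statement for $GO(k,V)$ (respectively $GS(k,V)$) to the statement for $GO(k-1,V)$ (respectively $GS(k-1,V)$). The base of the induction is $k=1$: here $GS(1,V)=\mathbb{P}(V)$, on which a linearly trivial bundle is trivial by \cite[Ch. I, Thm. 3.2.1]{OSS}, while $GO(1,V)$ is a smooth quadric hypersurface in $\mathbb{P}(V)$ (a smooth conic $\simeq\mathbb{P}^1$ when $\dim V=3$), on which a linearly trivial bundle is trivial by Proposition \ref{trivial on lines}. Note also that if $\dim V=2m$ and $k=m$, then $GO(m,V)$ is smooth but has two components, each isomorphic to $GO(m-1,\mathbb{C}^{2m-1})$ (cf. \cite[Section 2.3]{PT3}), so this case is subsumed under the ones below.

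For the inductive step set $G:=GO(k,V)$ --- the symplectic case being completely analogous and only indicated in passing --- put $\hat{G}:=GO(k-1,V)$, and consider the variety of isotropic flags
\[
F:=\{(V_{k-1},V_k)\in\hat{G}\times G\ |\ V_{k-1}\subset V_k\},
\]
with its two projections $\hat{G}\overset{p}\leftarrow F\overset{q}\to G$. The morphism $q$ is a $\mathbb{P}^{k-1}$-bundle, its fibre over $V_k$ being $\mathbb{P}(V_k^{*})$; the morphism $p$ is a smooth flat projective morphism whose fibre over $V_{k-1}$ is the isotropic grassmannian $GO(1,V_{k-1}^{\perp}/V_{k-1})$, i.e. a smooth quadric of dimension $\dim V-2k$ --- a smooth conic $\simeq\mathbb{P}^1$ in the extremal case $\dim V=2k+1$; in the symplectic case the fibre of $p$ is instead the projective space $\mathbb{P}(V_{k-1}^{\perp}/V_{k-1})$. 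In all these cases the fibres of $p$ and $q$ are irreducible, so Proposition \ref{trivial on fibres} applies to both morphisms.

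Let now $E$ be a linearly trivial vector bundle on $G$ and set $\mathcal{E}:=q^{*}E$. For any $V_{k-1}\in\hat{G}$ the morphism $q$ identifies the fibre $p^{-1}(V_{k-1})$ with the subvariety $\{V_k\in G\ |\ V_{k-1}\subset V_k\}$ of $G$, which is a linearly embedded quadric (respectively projective space, respectively projective line); restricted to it, $E$ is again linearly trivial and hence, by Proposition \ref{trivial on lines} (respectively by \cite[Ch. I, Thm. 3.2.1]{OSS}, respectively by the linear triviality of $E$), trivial. Thus $\mathcal{E}$ is trivial along the fibres of $p$, so by the first assertion of Proposition \ref{trivial on fibres} together with the Base-change Theorem the sheaf $E':=p_{*}\mathcal{E}$ is a vector bundle on $\hat{G}$ and $ev:p^{*}E'\overset{\simeq}\to\mathcal{E}$ is an isomorphism. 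Moreover $E'$ is linearly trivial on $\hat{G}$: a projective line $\mathbb{P}^1\subset\hat{G}$ is a pencil $\{V_{k-1}\ |\ V_{k-2}\subset V_{k-1}\subset V_k'\}$ attached to an isotropic flag $V_{k-2}\subset V_k'$ with $\dim V_k'=k$, and the curve $C:=\{(V_{k-1},V_k')\ |\ V_{k-1}\in\mathbb{P}^1\}\subset F$ is mapped isomorphically onto $\mathbb{P}^1$ by $p$ and to the single point $\{V_k'\}$ by $q$, so that $E'|_{\mathbb{P}^1}\simeq (p^{*}E')|_{C}\simeq\mathcal{E}|_{C}\simeq E|_{\{V_k'\}}$ is trivial (this is the analogue, for $p$ and $q$, of the curve constructed in Lemma \ref{lemma5.3}). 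By the induction hypothesis $E'$ is trivial, hence $\mathcal{E}=q^{*}E\simeq p^{*}E'$ is trivial; and since the fibres of $q$ are projective spaces we have $q_{*}\mathcal{O}_{F}=\mathcal{O}_{G}$, so the second assertion of Proposition \ref{trivial on fibres} gives $E\simeq q_{*}q^{*}E=q_{*}\mathcal{E}$, which is trivial.

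The point requiring the most care is the geometry of the fibres of $p$: one must verify that $q$ maps each such fibre, together with the projective lines lying in it, onto a linear section of $G$ --- a linearly embedded quadric, projective space, or projective line --- along which linear triviality propagates. Here it is essential that on a maximal isotropic orthogonal grassmannian the projective lines, taken as throughout the paper with respect to the ample generator of the Picard group, are precisely the rational curves that occur as images under $q$ of the (possibly conic) fibres of $p$. The remaining bookkeeping --- the case $\dim V=2m$ with $k=m$, disposed of above via the two components, and the passage from $GO$ to $GS$ --- presents no additional difficulty.
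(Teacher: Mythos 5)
Your proof is correct in substance and, in the non-maximal range, follows the same architecture as the paper's: the correspondence $GO(k-1,V)\leftarrow F(k-1,k)\rightarrow GO(k,V)$, triviality on the quadric fibres via Proposition \ref{trivial on lines}, the evaluation isomorphism from Proposition \ref{trivial on fibres} (the paper's Proposition 7.1), linear triviality of the pushforward via curves collapsed by the other projection, and induction on $k$; your collapsing-curve verification of linear triviality of $E'$ is a mild variant of the paper's observation that the $\mathbb{P}^{k-1}$-fibres of $q$ map onto the maximal linear subspaces of $GO(k-1,V)$, and both work. Where you genuinely diverge is the maximal orthogonal case: the paper cannot run its step when the quadric fibre degenerates, so it treats $GO(n,V)$ with $\dim V=2n$ by a \emph{separate} induction on $n$, using the incidence variety $\{(V_1,V_n)\}$ with the quadric $Q_{2n-2}=GO(1,V)$ and Proposition \ref{trivial on lines} on that quadric. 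You instead reduce the even maximal case to the odd maximal one via the two-component isomorphism $GO(m,\mathbb{C}^{2m})^{\pm}\simeq GO(m-1,\mathbb{C}^{2m-1})$ and absorb the odd maximal case $GO(k,\mathbb{C}^{2k+1})$ into the same induction, exploiting that the conic fibres $\{V_k\supset V_{k-1}\}$ are degree-one curves for the generator of $\Pic$. This buys a single uniform induction and avoids the second correspondence, at the price of needing a fact about lines on the maximal isotropic grassmannian that the paper's route never touches.

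That fact is the one weak point of your write-up: you assert that the lines on the maximal orthogonal grassmannian are ``precisely the rational curves that occur as images under $q$ of the (possibly conic) fibres of $p$.'' As stated this is false: under the identification of $GO(k,\mathbb{C}^{2k+1})$ with a component of $GO(k+1,\mathbb{C}^{2k+2})$, the lines are parametrized by isotropic $(k-1)$-dimensional subspaces of $\mathbb{C}^{2k+2}$ (this is exactly the description $B_m\simeq GO(k_m-2,V_{2k_m})$ used in subsection 4.3), and the conic fibres form only the proper subfamily coming from subspaces of $\mathbb{C}^{2k+1}$. Fortunately your argument needs only the inclusion: each conic $\{V_k\supset V_{k-1}\}$ has Pl\"ucker degree $2$, hence degree $1$ with respect to the spinor generator of $\Pic GO(k,\mathbb{C}^{2k+1})$, so it is a line and linear triviality of $E$ applies to it directly. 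You should replace the ``precisely'' claim by this degree computation (or by the identification of the conic with a line on the spinor variety); with that correction the induction closes and the proof is complete. The remaining implicit point --- that the rulings of the embedded quadric fibres are lines of $GO(k,V)$ in the non-maximal range --- is equally implicit in the paper's own proof and is immediate from the Pl\"ucker degree, so no further repair is needed there.
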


\begin{proof}
Consider the case $GO(k,V)$. We give a proof by induction under the assumption that
$n:=\frac{\dim V}{2}\in \mathbb{Z}_{>0}.$ The case when $\dim V$ is odd can be treated similarly.

For $n=2$ we have
$GO(1,V)\simeq\mathbb{P}^1\times\mathbb{P}^1$, $GO(2,V)\simeq\mathbb{P}^1$,
and for these varieties our claim clearly holds. Therefore we assume that $n\ge3$ and argue
by induction on $k$.
If $k=1$, $GO(k,V)$ is a $(2n-2)$-dimensional quadric in $\mathbb{P}^{2n-1}$ so our statement holds by
Proposition \ref{trivial on lines}. Now let $1\le k\le n-2$,
and recall the graph of incidence $\Sigma$ with natural projections
\begin{equation}\label{equation108}
GO(k,V)\overset{q}\leftarrow\Sigma\overset{p}\to GO(k+1,V)
\end{equation}
(see subsection \ref{5.1}).

Let $E$ be a linearly trivial vector bundle on $GO(k+1,V)$.
Then the bundle $p^*E$ is linearly trivial on the fibres of $q$.
Since these fibres are quadrics, Proposition \ref{trivial on lines} implies that
$p^*E$ is trivial on the fibres of $q$. Furthermore, Proposition \ref{trivial on fibres}
yields an isomorphism $q^*q_*p^*E\overset{\simeq}\to p^* E$.
Hence, since $p^*E$ is trivial along the fibres of $p$ which are mapped by $q$
isomorphically to projective spaces $\mathbb{P}^k$ on $GO(k,V)$, it follows that $q_*p^*E$ is trivial along these projective subspaces $\mathbb{P}^k$ of $GO(k,V)$. Consequently,
$q_*p^*E$ is linearly trivial on $GO(k,V)$. Thus, by the induction assumption,
$q_*p^*E$ is trivial. Hence $p^*E$ and $E=p_* p^* E$ are trivial.

It remains to consider $GO(n,V)$. Here we employ induction on $n$.
For $n=3$ $GO(n,V)\simeq\mathbb{P}^3$, hence the statement holds in this case.
For $n\ge4$, consider the graph of incidence
$\Pi_n:=\{(V_1,V_{n})\in Q_{2n-2}\times GO(n,V)\ |\ V_1\subset V_{n}\}$
with natural projections
\begin{equation}\label{equation109}
Q_{2n-2}\overset{p}\leftarrow\Pi_n\overset{q}\to GO(n,V).
\end{equation}

Let $E$ be a linearly trivial vector bundle on $GO(n,V)$. Then $q^* E$ is trivial on
lines lying in the fibres of $p$ which are isomorphic to $GO(n-1,\mathbb{C}^{2n-2})$.
Hence $q^* E$ is trivial along the fibres of $p$ by the induction assumption.
Next, Proposition 7.1 yields an isomorphism $p^*p_*q^*E\overset{\simeq}\to q^*E$.
Since $q^* E$ is trivial on the fibres of $p$, it follows that $p_*q^*E$
is trivial on the projective subspaces $\mathbb{P}^{n-1}$ of the quadric $Q_{2n-2}$.
Therefore $p_* q^* E$ is trivial on the lines in $Q_{2n-2}$, so it is trivial by Proposition 7.3. Finally, $q^*E\simeq p^*p_*q^*E$ and $E= q_* q^* E$ are trivial as well.

Proceed to the case  of $GS(k,V)$. Substituting $GO$ by $GS$ in diagram (\ref{equation108}),
we obtain a diagram
$GS(k,V)\overset{q}\leftarrow\Sigma'\overset{p}\to GS(k+1,V)$,
where  $p$ is a $\mathbb{P}^k$-bundle and $q$ is a $\mathbb{P}^{2n-2k-1}$-bundle.
Respectively, substituting $GO$ by $GS$, and $Q_{2n-2}$ by $\mathbb{P}(V_n)$
in diagram (\ref{equation109}), we obtain a diagram
$\mathbb{P}(V_n) \overset{p}\leftarrow\Pi'_n\overset{q}\to GS_n$.
This enables us to carry out an argument very similar to the one for $GO(k,V)$.
\end{proof}

\end{document}